\newcommand{\F}{\mathbb{F}}
\newcommand{\PP}{\mathbb{P}}
\newcommand{\Q}{\mathbb{Q}}
\newcommand{\R}{\mathbb{R}}
\newcommand{\Z}{\mathbb{Z}}
\newcommand{\kbar}{{\overline{k}}}
\newcommand{\lrs}[1]{(\!(#1)\!)}
\newcommand{\calC}{\mathcal{C}}
\newcommand{\calJ}{\mathcal{J}}
\newcommand{\calO}{\mathcal{O}}
\newcommand{\calQ}{\mathcal{Q}}
\newcommand{\calX}{\mathcal{X}}
\DeclareMathOperator{\Char}{char}
\DeclareMathOperator{\cl}{cl}
\DeclareMathOperator{\im}{im}
\DeclareMathOperator{\res}{res}
\DeclareMathOperator{\Sel}{Sel}
\newcommand{\tors}{{\operatorname{tors}}}
\newcommand{\injects}{\hookrightarrow}
\newcommand{\surjects}{\twoheadrightarrow}
\newcommand{\rholog}{\rho \log}
\newcommand{\To}{\longrightarrow}
\numberwithin{equation}{section}
\newtheorem{theorem}{Theorem}[section]
\newtheorem{lemma}[theorem]{Lemma}
\newtheorem{corollary}[theorem]{Corollary}
\newtheorem{proposition}[theorem]{Proposition}
\theoremstyle{definition}
\newtheorem{conjecture}[theorem]{Conjecture}
\newtheorem{algo}[theorem]{Algorithm}
\theoremstyle{remark}
\newtheorem{remark}[theorem]{Remark}
\definecolor{darkgreen}{rgb}{0,0.5,0}
\begin{document}

\title{Chabauty without the Mordell-Weil group}

\author{Michael Stoll}
\address{Mathematisches Institut,
         Universit\"at Bayreuth,
         95440 Bayreuth, Germany.}
\email{Michael.Stoll@uni-bayreuth.de}
\urladdr{http://www.mathe2.uni-bayreuth.de/stoll/}

\date{December 7, 2017}

\begin{abstract}
  Based on ideas from recent joint work with Bjorn Poonen,
  we describe an algorithm that can in certain cases determine the set
  of rational points on a curve~$C$, given only the $p$-Selmer group~$S$ of
  its Jacobian (or some other abelian variety $C$ maps to) and the
  image of the $p$-Selmer set of~$C$ in~$S$. The method is more likely
  to succeed when the genus is large, which is when it is usually rather difficult
  to obtain generators of a finite-index subgroup of the Mordell-Weil group,
  which one would need to apply Chabauty's method in the usual way.
  We give some applications, for example to generalized Fermat equations
  of the form $x^5 + y^5 = z^p$.
\end{abstract}

\maketitle


\section{Introduction}

When one is faced with the task of determining the set of rational points
on a (say) hyperelliptic curve $C \colon y^2 = f(x)$, then the usual way to proceed
is in the following steps. We denote the Jacobian variety of~$C$ by~$J$,
and we assume that $f$ has odd degree, so there is a rational point at infinity
on~$C$, which eliminates possible shortcuts that can be used to show that
a curve does not have any rational points.

\begin{enumerate}[1.]\addtolength{\itemsep}{5pt}
  \item Search for rational points on~$C$. \\[0.5ex]
        This can be done reasonably efficiently for $x$-coordinates whose
        numerator and denominator are at most~$10^5$, say.
        Rational points on curves of genus $\ge 2$ are expected to be fairly
        small (in relation to the coefficients), so the result very likely
        is~$C(\Q)$. It remains to show that we have not overlooked any points.
  \item Compute the $2$-Selmer group~$\Sel_2 J$~\cite{Stoll2001}. \\[0.5ex]
        The `global' part of this computation
        requires arithmetic information related to class group and
        unit group data for the number fields generated by the roots of~$f$.
        If the degrees of the irreducible factors of~$f$ are not too large
        (and the coefficients are of moderate size), then this computation
        is feasible in many cases, possibly assuming the Generalized Riemann
        Hypothesis to speed up the class group computation.
        The `local' part of the computation is fairly easy for the infinite
        place and the odd finite places, but it can be quite involved to
        find a basis of $J(\Q_2)/2 J(\Q_2)$. \\[0.5ex]
        To proceed further, we need the resulting bound~$r$ for the rank of~$J(\Q)$,
        \[ r = \dim_{\F_2} \Sel_2 J - \dim_{\F_2} J(\Q)[2], \]
        to be strictly less than the genus~$g$ of~$C$.
        By work of Bhargava and Gross~\cite{BhargavaGross2013} it is known
        that the Selmer group is small on average, independent of the genus,
        so when $g$ is not very small, this condition is likely to be satisfied.
  \item Find $r$ independent points in~$J(\Q)$. \\[0.5ex]
        We can use the points on~$C$ we have found in Step~1 to get some
        points in~$J(\Q)$. However, it can be quite hard to find further
        points if the points we get from the curve generate a subgroup
        of rank $< r$. There are two potential problems. The first is of
        theoretical nature: the rank of~$J(\Q)$ can be strictly smaller than~$r$,
        in which case it is obviously impossible to find $r$ independent points.
        Standard conjectures imply that the difference between $r$ and the rank
        is even, so we will not be in this situation when we are missing just
        one point. In any case, if we suspect our bound is not tight, we can try
        to use visualization techniques~\cite{BruinFlynn2006} to improve the bound.
        The second problem is practical: some of the generators of~$J(\Q)$ can have
        fairly large height and are therefore likely to fall outside our search
        space. When the genus~$g$ is moderately large, then we also have the
        very basic problem that the dimension of our search space is large. \\[0.5ex]
        To proceed further, we need to know generators of a finite-index
        subgroup~$G$ of~$J(\Q)$.
  \item Fix some (preferably small) prime~$p$ (preferably of good reduction)
        and use the knowledge of~$G$ to compute a basis of the space~$V$ of $\Q_p$-defined
        regular differentials on~$C$ that kill the Mordell-Weil group~$J(\Q)$
        under the Chabauty-Coleman pairing (see for example~\cite{Stoll2006}). \\[0.5ex]
        This requires evaluating a bunch of $p$-adic abelian integrals on~$C$,
        which (in the case of good reduction with $p$ odd) can be done by
        an algorithm due to Bradshaw and Kedlaya and made practical by
        Balakrishnan~\cite{BalakrishnanBradshawKedlaya}.
  \item Find the common zeros of the functions $P \mapsto \int_\infty^P \omega$
        on~$C(\Q_p)$, where $\omega$ runs through a basis of~$V$. \\[0.5ex]
        The rational points are among this set. If there are additional zeros,
        then they can usually be excluded by an application of the Mordell-Weil
        sieve~\cite{BruinStoll2010}.
\end{enumerate}

The most serious stumbling block is Step~3, in particular when the genus~$g$
is of `medium' size (say between 5 and~15), so that Step~2 is feasible, but
we are likely to run into problems when trying to find sufficiently many
independent points in the Mordell-Weil group.

In this paper we propose an approach that circumvents this problem.
Its great advantage is that it uses only the $2$-Selmer group and data
that can be obtained by a purely $2$-adic computation. Its disadvantage
is that it may fail: for it to work, several conditions have to be satisfied,
which, however, are likely to hold in particular when the genus gets large.

Generally speaking, the method tries to use the ideas of~\cite{PoonenStoll2014}
(where it is shown that many curves as above have the point at infinity as their
only rational point) to deal with given concrete curves. Section~\ref{S:key}
gives a slightly more flexible version of one of the relevant results
of this paper. In Section~\ref{S:ratpts}, we formulate the algorithm
for hyperelliptic curves of odd degree that is based on this key result.
The method will apply in other situations as well (whenever we are able
to compute a suitable Selmer group), and we plan to work this out in more detail
in a follow-up paper
for the case of general hyperelliptic curves and also for the setting of
`Elliptic Curve Chabauty', where one wants to find the set of $k$-points~$P$
on an elliptic curve~$E$ defined over a number field~$k$ such that $f(P) \in \PP^1(\Q)$,
where $f \colon E \to \PP^1$ is a non-constant $k$-morphism.
One application in the latter setting is given at the end of this paper.
The approach has also already been applied in~\cite{FreitasNaskreckiStoll} to complete the
resolution of the Generalized Fermat Equation $x^2 + y^3 = z^{11}$.

One ingredient of the algorithm is the computation of `halves' of points
in the group~$J(\Q_2)$. In Section~\ref{S:halving} we give a general procedure
for doing this in~$J(k)$, when $J$ is the Jacobian of an odd degree hyperelliptic
curve and $k$ is any field not of characteristic~$2$.
In Section~\ref{S:Ex}, we demonstrate the usefulness of our approach by
showing that the only integral solutions of $y^2 - y = x^{21} - x$ are
the obvious ones.

In Section~\ref{S:FLT}, we show how our method leads to a fairly simple
criterion that implies the validity of Fermat's Last Theorem for a given
prime exponent. This does not lead to any new results, of course, but it
gives a nice illustration of the power of the method. In Section~\ref{S:GFE},
we then apply our approach to the curves $5 y^2 = 4 x^p + 1$. Carrying
out the computations, we can show that the only rational points on these
curves are the three obvious ones, namely $\infty$, $(1, 1)$ and~$(1, -1)$,
when $p$ is a prime $\le 53$ (assuming GRH for $p \ge 23$). A result due
to Dahmen and Siksek~\cite{DahmenSiksek} then implies that the only
coprime integer solutions of the Generalized Fermat Equation
\[ x^5 + y^5 = z^p \]
are the trivial ones (where $xyz = 0$).

As already mentioned, we end with another type of example,
which uses the method in the context
of `Elliptic Curve Chabauty' to show that a certain hyperelliptic curve
of genus~$4$ over~$\Q$ has only the obvious pair of rational points.
The Mordell-Weil rank is~$4$ in this case, so no variant of Chabauty's
method applies directly to the curve.

\subsection*{Acknowledgments}

I would like to thank Bjorn Poonen for useful discussions and MIT for its
hospitality during a visit of two weeks in May~2015, when these discussions
took place. All computations were done using the computer algebra system
Magma~\cite{Magma}.


\section{The algorithm} \label{S:key}

In this section we formulate and prove a variant
of~\cite{PoonenStoll2014}*{Proposition~6.2}. We then use it to give an
algorithm that can show that
the set of known rational points in some subset~$X$ of the $p$-adic points
of a curve already consists of all rational points contained in~$X$,
using as input only the $p$-Selmer group of the Jacobian of the curve.
The idea behind this goes back to McCallum's paper~\cite{McCallum1994}.

Let $k$ be a number field,
let $C/k$ be a nice (meaning smooth, projective and geometrically irreducible)
curve of genus~$g \ge 2$ and let $A/k$ be an abelian
variety, together with a map $i \colon C \to A$
such that $A$ is generated by the image of~$C$ (for example,
$A$ could be the Jacobian of~$C$ and $i$ the embedding given by taking
some $k$-rational point $P_0 \in C(k)$ as basepoint).
Fix a prime number~$p$.
We write $\Sel_p A$ for the $p$-Selmer group of~$A$.
Recall that this is defined as the kernel of the diagonal homomorphism
in the commuting diagram with exact rows
\[ \xymatrix{0 \ar[r] & \dfrac{A(k)}{p A(k)} \ar[r]^-{\delta} \ar[d]
                      & H^1(k, A[p]) \ar[r] \ar[d] \ar[dr]
                      & H^1(k, A)[p] \ar[r] \ar[d]
                      & 0 \\
             0 \ar[r] & \displaystyle\prod_v \frac{A(k_v)}{p A(k_v)} \ar[r]^-{\delta}
                      & \displaystyle\prod_v H^1(k_v, A[p]) \ar[r]
                      & \displaystyle\prod_v H^1(k_v, A)[p] \ar[r]
                      & 0
            }
\]
that is induced by applying Galois cohomology to the short exact sequence
\[ 0 \To A[p] \To A \stackrel{\cdot p}{\To} A \To 0 \]
of Galois modules over~$k$ and over all completions~$k_v$ of~$k$,
so the products in the second row run over all places~$v$ of~$k$.
The vertical maps are induced by $k \injects k_v$.
In particular, for each place~$v$ there is a canonical map
$\Sel_p(A) \to A(k_v)/p A(k_v)$.

We write $k_p = k \otimes_{\Q} \Q_p$; this is the
product of the various completions of~$k$ at places above~$p$.
The set~$C(k_p)$ and the group~$A(k_p)$ can similarly be understood as
products of the sets or groups of $k_v$-points, for the various $v \mid p$.
The inclusion $k \injects k_p$ induces natural maps $C(k) \injects C(k_p)$
and $A(k) \injects A(k_p)$. Let $X \subseteq C(k_p)$ be a subset (for example,
the points in a product of $v$-adic residue disks).
We then have the following commutative diagram of maps.
\[ \xymatrix{ C(k) \cap X \ar@{^(->}[d] \ar@{^(->}[r]
               & C(k) \ar@{^(->}[d] \ar[r]^-{i}
               & A(k) \ar@{^(->}[d] \ar@{->>}[r]^-{\pi}
               & \dfrac{A(k)}{p A(k)} \ar@{^(->}[r]^-{\delta} \ar[d]
               & \Sel_p A \ar[dl]^-{\sigma} \\
              X \ar@{^(->}[r]
               & C(k_p) \ar[r]^-{i}
               & A(k_p) \ar@{->>}[r]^-{\pi_p}
               & \dfrac{A(k_p)}{p A(k_p)}
            }
\]

We introduce some more notation. For $P \in A(k_p)$, we set
\begin{equation} \label{E:defq}
  q(P) \colonequals \{\pi_p(Q) : Q \in A(k_p), \exists n \ge 0 \colon p^n Q = P\}
       \subseteq \frac{A(k_p)}{p A(k_p)} ,
\end{equation}
and for a subset $S \subseteq A(k_p)$, we set $q(S) = \bigcup_{P \in S} q(P)$.
We further define
\[ \nu(P) \colonequals \sup\{n : n \ge 0, P \in p^n A(k_p)\} \in \Z_{\ge 0} \cup \{\infty\} . \]
Note that $\nu(P) = \infty$ is equivalent to $P$ having finite order prime to~$p$; on the
complement of the finite set consisting of such~$P$, $\nu$ and~$q$ are locally constant.

With a view toward further applications, we first state a more general version
of our result, which we will then specialize (see Theorem~\ref{T:key} below).
We remark that $C$ could also be a variety of higher dimension here.

\begin{theorem} \label{T:key-general}
  In the situation described above, fix some subgroup $\Gamma \subseteq A(k)$ and
  assume that
  \begin{enumerate}[\upshape(1)]\addtolength{\itemsep}{3pt}
    \item \label{key-gen-1}
          $\ker \sigma \subseteq \delta(\pi(\Gamma))$, and that
    \item \label{key-gen-2}
          $q\bigl(i(X) + \Gamma\bigr) \cap \im(\sigma) \subseteq \pi_p(\Gamma)$.
  \end{enumerate}
  Then $i\bigl(X \cap C(k)\bigr) \subseteq \bar{\Gamma}
               \colonequals \{Q \in A(k) : \exists n \ge 1 \colon nQ \in \Gamma\}$.
\end{theorem}

\begin{proof}
  Let $P \in X \cap C(k)$. We show by induction on~$n$ that for each $n \ge 0$,
  there are $T_n \in \Gamma$ and $Q_n \in A(k)$ such that $i(P) = T_n + p^n Q_n$.
  This is clear for $n = 0$ (take $T_0 = 0$ and $Q_0 = i(P)$). Now assume that
  $T_n$ and~$Q_n$ exist. Note that $\pi_p(Q_n) \in q(i(P)-T_n)$, so
  \[ \pi_p(Q_n) = \sigma(\delta(\pi(Q_n))) \in q\bigl(i(X) + \Gamma\bigr) \cap \im(\sigma); \]
  by~\eqref{key-gen-2} this implies
  $\sigma(\delta(\pi(Q_n))) \in \pi_p(\Gamma) = \sigma(\delta(\pi(\Gamma)))$.
  This shows that $Q_n \in \Gamma + \ker (\sigma \circ \delta \circ \pi)$.
  By~\eqref{key-gen-1} and since $\delta$ is injective, we have
  \[ \ker (\sigma \circ \delta \circ \pi)
       = \pi^{-1}\bigl(\delta^{-1}(\ker \sigma)\bigr)
       \subseteq \pi^{-1}\bigl(\pi(\Gamma))
       = \Gamma + \ker \pi
       = \Gamma + p A(k) ,
  \]
  which implies that $Q_n \in \Gamma + p A(k)$. So there are $T' \in \Gamma$ and
  $Q_{n+1} \in A(k)$ such that $Q_n = T' + p Q_{n+1}$. We set $T_{n+1} = T_n + p^n T' \in \Gamma$;
  then
  \[ i(P) = T_n + p^n Q_n = T_n + p^n(T' + p Q_{n+1}) = T_{n+1} + p^{n+1} Q_{n+1} . \]
  Now consider the quotient map $\psi \colon A(k) \surjects A(k)/\bar{\Gamma}$.
  Since $\bar{\Gamma}$ is saturated in the finitely generated group~$A(k)$,
  the quotient group is torsion free
  and hence free. Observe that for every $n \ge 0$,
  \[ \psi(i(P)) = \psi(T_n + p^n Q_n) = \psi(T_n) + p^n \psi(Q_n)
                = p^n \psi(Q_n) \in p^n\bigl(A(k)/\bar{\Gamma}\bigr) ,
  \]
  which implies that $\psi(i(P)) = 0$ and so $i(P) \in \bar{\Gamma}$.
\end{proof}

The point of formulating the statement in this way (as compared to~\cite{PoonenStoll2014})
is that we avoid the use of $p$-adic abelian
logarithms, which would require us to compute $p$-adic abelian integrals,
usually with $p = 2$ and in a situation when the curve has bad reduction at~$2$.
Instead, we need to be able to compute~$q(P)$ for a given point~$P$, which comes
down to finding its $p$-division points. At least in some cases of interest,
this approach seems to be computationally preferable.

\begin{remark} \label{R:isogeny}
  Instead of considering multiplication by~$p$,
  we could use an endomorphism~$\psi$ of~$A$ that is an isogeny of degree a power of~$p$
  and such that some power of~$\psi$ is divisible by~$p$ in the endomorphism ring
  of~$A$. We then consider $A(k)/\psi A(k)$, $A(k_p)/\psi A(k_p)$
  and the $\psi$-Selmer group~$\Sel_\psi A$. Note that when $\psi \colon A \to A'$
  is any isogeny whose kernel has order a power of~$p$ and with dual isogeny~$\hat{\psi}$,
  then we can consider $A \times A'$ with the endomorphism
  $\tilde{\psi} \colon (P, P') \mapsto (\hat{\psi}(P'), \psi(P))$,
  which satisfies $\tilde{\psi}^2 = \deg \psi = p^m$, together with the
  morphism $\tilde{\imath} \colon X \to A \times A'$, $P \mapsto (i(P), 0)$.
  Taking $\Gamma \times \{0\}$ in place of~$\Gamma$ and writing the relevant maps as
  \[ \xymatrix{A(k) \ar@{->>}[r]^-{\pi}
                & \dfrac{A(k)}{\hat{\psi}(A'(k))} \ar@{^(->}[r]^-{\delta}
                & \Sel_{\hat{\psi}} A' \ar[r]^-{\sigma} & \dfrac{A(k_p)}{\hat{\psi}(A'(k_p))}
              }
  \]
  and
  \[ \xymatrix{A'(k) \ar@{->>}[r]^-{\pi'}
                & \dfrac{A'(k)}{\psi(A(k))} \ar@{^(->}[r]^-{\delta'}
                & \Sel_{\psi} A \ar[r]^-{\sigma'} & \dfrac{A'(k_p)}{\psi(A(k_p))}
              },
  \]
  the second condition in Theorem~\ref{T:key-general} translates into
  \[ q_A(i(X) + \Gamma) \cap \im(\sigma) \subseteq \sigma(\delta(\pi(\Gamma))) \]
  and
  \[ q_{A'}(\hat{\psi}^{-1}(i(X)) + A'(k)_\tors) \cap \im(\sigma')
        \subseteq \sigma'(\delta'(\pi'(A'(k)_\tors))) .
  \]
\end{remark}

\begin{remark} \label{R:chab}
  The set $X \cap i^{-1}(\bar{\Gamma})$ that contains $C(k) \cap X$ when
  Theorem~\ref{T:key-general} applies can in many cases be determined by the
  usual Chabauty-Coleman techniques; see for example~\cite{Stoll2006}.
  Of course, if $\Gamma$ is finite (and so $\bar{\Gamma} = A(k)_\tors$ is finite
  as well), which is usually the case in applications, then determining
  $X \cap i^{-1}(\bar{\Gamma})$ is essentially trivial.
\end{remark}

We give some indication of how one can compute a set such as $q(P + \Gamma)$,
where $P \in A(k_p)$. We assume that, given $P \in A(k_p)$, we can find
all $Q \in A(k_p)$ such that $pQ = P$.

\begin{lemma} \label{L:qpgamma}
  With the notations used in Theorem~\ref{T:key-general}, fix a complete
  set of representatives $R \subseteq \Gamma$ for~$\Gamma/p\Gamma$.
  Let $P \in A(k_p)$ and set
  $\calQ = \{Q \in A(k_p) : \exists T \in R \colon pQ = P + T\}$.
  Define an equivalence relation on~$\calQ$ via $Q \sim Q' \iff Q-Q' \in \Gamma$,
  and let $\calQ'$ be a complete set of representatives for~$\calQ/{\sim}$.
  Then
  \[ q(P + \Gamma) = \{\pi_p(P+T) : T \in R\}   
                     \cup \bigcup_{Q \in \calQ'} q(Q + \Gamma) .
  \]
\end{lemma}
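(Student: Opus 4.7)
The plan is to verify the claimed set equality by a direct unpacking of the definition of $q$. Every element of $q(P + \Gamma)$ has the form $\pi_p(Q')$ for some $Q' \in A(K_p)$ with $p^n Q' = P + T_0$, $T_0 \in \Gamma$, $n \ge 0$; I would split the analysis according to whether $n = 0$ or $n \ge 1$, and these two cases should match the two summands on the right-hand side.

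For the inclusion $\supseteq$, both pieces should be almost immediate. Any element of $\pi_p(P) + \pi_p(\Gamma)$ has the form $\pi_p(P + T)$ with $T \in \Gamma$, and this is $\pi_p(Q')$ for $Q' = P + T$ with $n = 0$, so it lies in $q(P+T) \subset q(P + \Gamma)$. For the other piece, given $Q \in \calQ'$ and $Q' \in A(K_p)$ with $p^n Q' = Q + T'$ and $T' \in \Gamma$, applying $p$ once more yields $p^{n+1} Q' = pQ + pT' = P + (T + pT')$ where $pQ = P + T$ for some $T \in R \subset \Gamma$; since $T + pT' \in \Gamma$, this shows $\pi_p(Q') \in q(P + \Gamma)$.

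The inclusion $\subseteq$ is the substantive direction, and the idea is to reduce a general $p^n$-division to a single $p$-division followed by a $p^{n-1}$-division, absorbing the leftover $\Gamma$-translation into the representative sets. Suppose $p^n Q' = P + T_0$ with $T_0 \in \Gamma$. If $n = 0$, then $\pi_p(Q') = \pi_p(P) + \pi_p(T_0)$, landing in the first summand. If $n \ge 1$, I would use the choice of $R$ to write $T_0 = T + pT''$ with $T \in R$ and $T'' \in \Gamma$, and then set $Q_0 \colonequals p^{n-1} Q' - T''$. This gives $p Q_0 = P + T$, so $Q_0 \in \calQ$; moreover $Q'$ is a $p^{n-1}$-division of $Q_0 + T'' \in Q_0 + \Gamma$, hence $\pi_p(Q') \in q(Q_0 + \Gamma)$. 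Finally, picking the unique $\tilde Q \in \calQ'$ with $\tilde Q - Q_0 \in \Gamma$, the cosets $Q_0 + \Gamma$ and $\tilde Q + \Gamma$ coincide, so $\pi_p(Q') \in q(\tilde Q + \Gamma)$, as required.

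The only real obstacle is bookkeeping: I need to make sure the reduction $T_0 \mapsto T \in R$ and the passage from $Q_0 \in \calQ$ to $\tilde Q \in \calQ'$ are legitimate and independent of the choices involved, so that the recursion on the right-hand side genuinely exhausts $q(P + \Gamma)$ without double-counting. Once one sees that $R$ reduces the inhomogeneity $T_0$ modulo $p\Gamma$ and that $\sim$ collapses exactly those $Q_0$ that produce identical translates $Q_0 + \Gamma$, no further difficulty arises.
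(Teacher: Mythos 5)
Your proposal is correct and follows essentially the same route as the paper's proof: the same $n=0$ versus $n\ge 1$ case split, the same reduction of the translate $T_0$ modulo $p\Gamma$ via $R$, and the same construction of a single $p$-division point $Q_0 = p^{n-1}Q' - T''$ lying in $\calQ$ (the paper uses the opposite sign convention for $T''$, which is immaterial). The closing worry about double-counting is unnecessary, since only a set equality is being asserted.
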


\begin{proof}
  Since $Q + \Gamma = Q' + \Gamma$ whenever $Q \sim Q'$, it is sufficient
  to prove the equality with $\calQ$ in place of~$\calQ'$.
  We first show that the set on the right is contained in the set on the left.
  This is clear for the elements $\pi_p(P+T)$, taking $n = 0$
  in~\eqref{E:defq}. So let now $Q \in \calQ$ and $\xi \in q(Q + \Gamma)$.
  Then there are $n \ge 0$, $T' \in \Gamma$ and $Q' \in A(k_p)$ such that
  $p^n Q' = Q + T'$ and $\pi_p(Q') = \xi$. There is also $T \in \Gamma$
  such that $pQ = P + T$. We then have
  \[ p^{n+1} Q' = p(Q + T') = P + (T + pT') \in P + \Gamma \]
  and so $\xi = \pi_p(Q') \in q(P + \Gamma)$.

  Now we show the reverse inclusion. Let $\xi \in q(P + \Gamma)$, so there
  are $n \ge 0$, $T' \in \Gamma$, $Q' \in A(k_p)$ such that $p^n Q' = P + T'$
  and $\pi_p(Q') = \xi$. There is also some $T \in R$ such that $T - T' = p T''$
  with $T'' \in \Gamma$.
  If $n = 0$, then $\xi = \pi_p(P + T') = \pi_p(P + T)$.
  If $n > 0$, we can write
  \[ P + T = (P + T') + p T'' = p (p^{n-1} Q' + T'') = p Q \]
  with $Q = p^{n-1} Q' + T'' \in \calQ$, and
  $\xi = \pi_p(Q') \in q(Q - T'') \subseteq q(Q + \Gamma)$.
\end{proof}

Whenever $\sup \nu(P + \Gamma) < \infty$, the recursion implied by
Lemma~\ref{L:qpgamma} will terminate, and so the lemma translates into
an algorithm for computing~$q(P + \Gamma)$. We make this condition more explicit.

\begin{lemma} \label{L:nugammainf}
  We write $\cl(\Gamma)$ for the topological closure of~$\Gamma$ in~$A(k_p)$.
  Let $P \in A(k_p)$. Then $\sup \nu(P + \Gamma) = \infty$ if and only if
  there is a point $T \in A(k_p)$ of finite order prime to~$p$ such that
  $P \in T + \cl(\Gamma)$.
\end{lemma}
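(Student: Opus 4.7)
The plan is to base both directions on two topological facts about the compact group $A(K_p) = \prod_{v \mid p} A(K_v)$: first, each subgroup $p^n A(K_p)$ is simultaneously open and closed in $A(K_p)$; second, the intersection $\bigcap_{n \ge 0} p^n A(K_p)$ is exactly the subgroup of points of finite order prime to~$p$. Openness and closedness come from the $p$-adic Lie group structure on each $A(K_v)$: the formal group supplies an open subgroup isomorphic to $\Z_p^g$ on which multiplication by~$p$ is a contraction, so $p A(K_v)$ (and hence every $p^n A(K_v)$) is open, while $p^n A(K_v)$ is the image of a compact group under a continuous map, hence closed. For the intersection formula I would split $A(K_v) \cong W_v \oplus V_v$ with $W_v$ the finite prime-to-$p$ torsion part and $V_v$ an open pro-$p$ complement; this splitting exists because multiplication by $|W_v|$ is a bijection on the pro-$p$ group~$V_v$. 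Then $p$ acts invertibly on $W_v$, while the open subgroups $p^n V_v \subset V_v$ have trivial intersection.

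Given these tools, the ``$\Leftarrow$'' direction is short: if $P = T + \delta$ with $mT = 0$ for some $m$ coprime to~$p$ and $\delta \in \cl(\Gamma)$, then for each~$n$ I pick $c$ with $p^n c \equiv 1 \pmod m$ so that $T = p^n(cT) \in p^n A(K_p)$. Approximating $\delta$ by $\gamma_k \in \Gamma$ gives $P - \gamma_k \to T$, and since $p^n A(K_p)$ is open, $P - \gamma_k$ lies in it for all sufficiently large~$k$; as $-\gamma_k \in \Gamma$, this shows $\sup \nu(P + \Gamma) \ge n$ for every~$n$, hence is infinite.

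For the ``$\Rightarrow$'' direction I would choose $\gamma_n \in \Gamma$ with $P + \gamma_n \in p^n A(K_p)$ and use compactness of $A(K_p)$ to extract a subsequence along which $\gamma_n$ converges to some $\delta \in \cl(\Gamma)$. Setting $T := P + \delta = \lim (P + \gamma_n)$, I note that for each fixed~$m$ the tail of $(P + \gamma_n)$ is contained in the closed set $p^m A(K_p)$, so the limit $T$ also lies in $p^m A(K_p)$; by the intersection formula, $T$ has finite order prime to~$p$, and $P = T - \delta$ (with $-\delta \in \cl(\Gamma)$) exhibits $P$ in the required form. The main technical obstacle is the intersection formula, in particular the direct-summand decomposition $A(K_v) = W_v \oplus V_v$ and the verification that $|W_v|$ acts invertibly on the pro-$p$ complement; once that is in hand, the remaining steps are routine manipulations of limits in a compact topological group.
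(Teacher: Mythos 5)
Your proof is correct and follows essentially the same route as the paper: both rest on writing $A(K_p)$ as the direct sum of its prime-to-$p$ torsion and a pro-$p$ complement, together with the observation that a highly $p$-divisible point must lie close to a prime-to-$p$ torsion point, while sufficiently small points are highly $p$-divisible. You merely make explicit, via compactness of $A(K_p)$ and the fact that each $p^nA(K_p)$ is open and closed, the limiting arguments that the paper's proof leaves informal.
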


\begin{proof}
  Let $A(k_p)_1$ be the kernel of reduction (i.e., the product of the kernels
  of reduction of the various $A(k_v)$ with $v$ a place above~$p$) and let
  $m$ denote the exponent of the finite group $A(k_p)/A(k_p)_1$.
  Then for all $P \in A(k_p)$, $p^n m P$ tends to the origin as $n \to \infty$.
  If $\sup \nu(P + \Gamma) = \infty$, then there are arbitrarily large~$n$
  such there exist $\gamma_n \in \Gamma$ and $Q_n \in A(k_p)$ with $P = \gamma_n + p^n Q_n$,
  so $m P - m \gamma_n$ tends to the origin as $n$ gets large.
  Then $P - \gamma_n$ must be close to a point of order~$m$; by restricting
  to a sub-sequence, we find that $P - \gamma_n$ approaches a point~$T \in A(k_p)[m]$.
  Since $T$ is close to $P - \gamma_n = p^n Q$ for arbitrarily large~$n$,
  $T$ must be infinitely divisible by~$p$, so the order of~$T$ is prime to~$p$.
  We clearly have $P \in T + \cl(\Gamma)$.

  For the converse, it suffices to consider $P$ in the closure of~$\Gamma$,
  since the points of finite order prime to~$p$ in~$A(k_p)$ are infinitely $p$-divisible.
  Since any point sufficiently close to the origin is highly $p$-divisible,
  this implies that for each $n \ge 0$ we can find $\gamma_n \in \Gamma$
  and $Q_n \in A(k_p)$ such that $P - \gamma_n = p^n Q_n$. This is equivalent
  to $\sup \nu(P + \Gamma) = \infty$.
\end{proof}

We specialize Theorem~\ref{T:key-general} to the case that $k = \Q$
and $i$ embeds the curve into its Jacobian.
Let $\calC$ be a proper regular model of~$C$ over~$\Z_p$. Then the reduction
map sends $C(\Q_p) = \calC(\Z_p)$ to the set of smooth $\F_p$-points on the special fiber of~$\calC$.
The preimage~$D$ of a smooth $\F_p$-point on the special fiber of~$\calC$ under
the reduction map is called a \emph{residue disk} in~$C(\Q_p)$; see~\cite{PoonenStoll2014}.
It follows from Hensel's Lemma that there is an analytic map $\varphi$ from the open $p$-adic
unit disk to~$C$ such that $D = \varphi(p \Z_p)$. If $p = 2$, then we call the subsets
$\varphi(4 \Z_2)$ and~$\varphi(2 + 4 \Z_2)$ \emph{half residue disks}.

\begin{theorem} \label{T:key}
  Let $C$ be a nice curve over~$\Q$, with Jacobian~$J$.
  Let $P_0 \in C(\Q)$ and take $X \subseteq C(\Q_p)$ to be contained in a residue disk or,
  when $p = 2$ and $J(\Q)[2] \neq 0$, in a half residue disk, and to contain~$P_0$.
  Let $i \colon C \to J$ be the embedding sending $P_0$ to zero.
  With the notation introduced above, assume that
  \begin{enumerate}[\upshape(1)]\addtolength{\itemsep}{3pt}
    \item $\ker \sigma \subseteq \delta(\pi(J(\Q)_\tors))$, and that
    \item $q\bigl(i(X) + J(\Q)_\tors\bigr) \cap \im(\sigma) \subseteq \pi_p(J(\Q)_\tors)$.
  \end{enumerate}
  Then $C(\Q) \cap X = \{P_0\}$.
\end{theorem}

\begin{proof}
  We apply Theorem~\ref{T:key-general} with $k = \Q$, $C$ our curve, $A = J$,
  $i$ as given in the statement and $\Gamma = \bar{\Gamma} = J(\Q)_\tors$.
  This tells us that $i\bigl(C(\Q) \cap X\bigr) \subseteq J(\Q)_\tors$.
  If $p > 2$ or $p = 2$ and $J(\Q)[2] = 0$,
  then the only rational torsion point in the kernel of reduction of~$J(\Q_p)$ is the origin,
  which implies that there cannot be two distinct points in~$X$ both mapping
  to torsion under~$i$. If $p = 2$ and $J(\Q)[2] \neq 0$, then the corresponding
  statement is true if $X$ is a half residue disk, which means that $i(X)$ is
  contained in~$K_2$, the second kernel of reduction; see Section~\ref{S:compute_q} below.
  In both cases, we find that there is at most one rational point in~$X$;
  since $P_0$ is one such point, it must be the only one.
\end{proof}

This leads to the following algorithm.
It either returns {\sf FAIL} or it returns the set of rational points on the curve~$C$.
We refer to~\cite{Stoll2007} for the definition of the $p$-Selmer set $\Sel_p(C)$
of the curve~$C$. Given an embedding~$i$ of~$C$ into its Jacobian~$J$, it can be
interpreted as the subset of~$\Sel_p(J)$ consisting of elements that locally
come from points on the curve.

\begin{algo} \label{Algo-general} \strut \\[5pt]
  \textbf{Input:}~A nice curve~$C$, defined over~$\Q$, with Jacobian~$J$. \\
  \hphantom{\textbf{Input:}}~A point $P_0 \in C(\Q)$, defining an embedding $i \colon C \to J$. \\
  \hphantom{\textbf{Input:}}~A prime number~$p$. \\
  \textbf{Output:} The set of rational points on~$C$, or {\sf FAIL}.

  \begin{enumerate}[1.]\addtolength{\itemsep}{3pt}
    \item Compute the $p$-Selmer group $\Sel_p J$ and the $p$-Selmer set $\Sel_p C$; \\
          $i$ induces a map $i_* \colon \Sel_p C \injects \Sel_p J$.
    \item Search for rational points on~$C$ and collect them in a set $C(\Q)_{\text{known}}$.
    \item \label{algo-g:inj}
          Let $\sigma \colon \Sel_p J \to J(\Q_p)/p J(\Q_p)$ be the canonical map. \\
          If $\ker \sigma \not\subseteq \delta(\pi(J(\Q)_\tors))$, then return {\sf FAIL}.
    \item Let $R$ be the image of~$J(\Q)_\tors$ in $J(\Q_p)/p J(\Q_p)$.
    \item Let $\calX$ be a partition of~$C(\Q_p)$ into residue disks
          whose image in $J(\Q_p)/p J(\Q_p)$ consists of one element and that are contained
          in half residue disks when $p = 2$ and $J(\Q)[2] \neq 0$.
    \item \label{algo-g:check}
          For each $X \in \calX$ do the following:
          \begin{enumerate}[a.]\addtolength{\itemsep}{3pt}
            \item If $X \cap C(\Q)_{\text{known}} = \emptyset$: \\
                  \strut\quad If $\pi_p(X) \subseteq \im(\sigma \circ i_*)$, then return {\sf FAIL}; \\
                  \strut\quad otherwise continue with the next~$X$.
            \item Pick some $P_1 \in C(\Q)_{\text{known}} \cap X$.
            \item Compute $Y = \bigcup_{P \in X, T \in J(\Q)_\tors} q([P-P_1]+T)
                             \subseteq J(\Q_p)/p J(\Q_p)$.
            \item If $Y \cap \im(\sigma) \not\subseteq R$, then return {\sf FAIL}.
          \end{enumerate}
    \item Return $C(\Q)_{\text{known}}$.
  \end{enumerate}
\end{algo}

\begin{proposition} \label{P:correct}
  The algorithm is correct: if it does not return {\sf FAIL}, then it returns
  the set of rational points on~$C$.
\end{proposition}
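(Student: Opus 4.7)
The plan is a case analysis over the residue disks $X \in \calX$. Since $\calX$ partitions $C(\Q_p)$ and $C(\Q) \subset C(\Q_p)$ via the inclusion $\Q \injects \Q_p$, it suffices to show that for each $X \in \calX$ we have $C(\Q) \cap X \subset C(\Q)_{\text{known}}$; the returned set $C(\Q)_{\text{known}}$ is by construction contained in~$C(\Q)$. So assume the algorithm terminates without returning {\sf FAIL}, and fix $X \in \calX$.

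First I would dispatch the case $X \cap C(\Q)_{\text{known}} = \emptyset$. Here I rely on the defining property of the $p$-Selmer set $\Sel_p C$: any $P \in C(\Q) \cap X$ gives rise to a class in $\Sel_p C$ whose image under $\sigma \circ i_*$ equals $\pi_p(i(P))$. By step~5, $X$ maps to a single element of $J(\Q_p)/p J(\Q_p)$ under $\pi_p \circ i$, so this single element would lie in $\im(\sigma \circ i_*)$. Since step~6a did not trigger {\sf FAIL}, we have $\pi_p(X) \not\subset \im(\sigma \circ i_*)$, forcing $X \cap C(\Q) = \emptyset$.

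Next I would handle the case $X \cap C(\Q)_{\text{known}} \neq \emptyset$ by picking the point $P_1$ as in step~6b and using it as the basepoint defining the embedding $i \colon C \to J$ (so $i(P_1) = 0$). I then claim Proposition~\ref{P:key} applies with this $P_1$, $X$ and $p$, which would yield $C(\Q) \cap X = \{P_1\} \subset C(\Q)_{\text{known}}$. The two hypotheses of Proposition~\ref{P:key} are precisely what the algorithm verifies: hypothesis~(1), that $\ker \sigma \subset \delta(\pi(J(\Q)_\tors))$, is exactly the check of step~3; for hypothesis~(2), note that with $P_1$ as basepoint the class $[P - P_1]$ is $i(P)$, so the set $Y$ computed in step~6c equals $q\bigl(i(X) + J(\Q)_\tors\bigr)$, and the non-{\sf FAIL} outcome of step~6d says precisely that $Y \cap \im(\sigma) \subset R = \pi_p(J(\Q)_\tors)$. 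The residue-disk condition on $X$ required by Proposition~\ref{P:key} (a disk, or a half-disk when $p = 2$ and $J(\Q)[2] \neq 0$) is built into step~5.

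I do not expect a genuine obstacle: once the diagram and the Selmer-set/Selmer-group formalism are in place, the proof is essentially a bookkeeping exercise matching each check in the algorithm against a hypothesis of Proposition~\ref{P:key} or against the definition of $\Sel_p C$. The only point requiring a little care is the basepoint dependence in the first case, where one must observe that, although $i$ and hence $\pi_p \circ i$ depend on a choice of basepoint, the assertion $\pi_p(X) \subset \im(\sigma \circ i_*)$ is basepoint-independent because changing the basepoint shifts both $\pi_p(X)$ and $\im(\sigma \circ i_*)$ by the same element of $J(\Q_p)/pJ(\Q_p)$.
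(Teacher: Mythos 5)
Your proof is correct and follows essentially the same route as the paper's: the paper also reduces to Proposition~\ref{P:key}, matching Step~\ref{algo-g:inj} to hypothesis~(1) and Steps \ref{algo-g:check}c--d (with the embedding re-based at~$P_1$) to hypothesis~(2), and uses the Selmer-set membership of a rational point exactly as you do to rule out the case $X \cap C(\Q)_{\text{known}} = \emptyset$. The only cosmetic difference is that the paper argues forward from an arbitrary $P \in C(\Q)$ rather than by cases on~$X$; your added observation about basepoint-independence in Step~\ref{algo-g:check}a is a reasonable extra precaution that the paper leaves implicit.
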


\begin{proof}
  First note that Step~\ref{algo-g:inj} verifies the first assumption of Theorem~\ref{T:key};
  it returns {\sf FAIL} when the assumption does not hold.
  It is also clear that if the algorithm does not return {\sf FAIL},
  then the set it returns is a subset of~$C(\Q)$.
  We show the reverse inclusion.
  So let $P \in C(\Q)$ be some rational point. There will be some $X \in \calX$
  such that $P \in X$. Then $\pi_p(X)$ is contained in $\im(\sigma \circ i_*)$,
  so since the algorithm did not return {\sf FAIL}, by Step~\ref{algo-g:check}a.
  it follows that $X \cap C(\Q)_{\text{known}} \neq \emptyset$;
  let $P_1 \in X \cap C(\Q)_{\text{known}}$ as in Step~\ref{algo-g:check}b.
  Now by Step~\ref{algo-g:check}d. the second assumption of Theorem~\ref{T:key}
  is satisfied, taking the embedding with base-point~$P_1$. So the theorem
  applies, and it shows that there is only one rational point in~$X$,
  so $P = P_1 \in C(\Q)_{\text{known}}$.
\end{proof}

\begin{remark} \label{R:modifyX}
  We note that in Step~\ref{algo-g:check}, the set $X$ can be further partitioned
  if necessary. If there are several points in~$C(\Q)_{\text{known}}$ that end up
  in the same set~$X$, then the second assumption of Theorem~\ref{T:key} cannot be
  satisfied. But it is still possible that the theorem can be applied to smaller
  disks that separate the points. (If the points are too close $p$-adically,
  this will not work, though. In this case, one could try to use $\Gamma + J(\Q)_\tors$
  in the more general version of the theorem, where $\Gamma$ is the subgroup
  generated by the difference of the two points.)

  There are also cases when it helps to combine several sets~$X$ into one.
  One such situation is when there are points in~$C(\Q_p)$ that differ by a
  torsion point of order prime to~$p$ and such that only one of the corresponding
  sets~$X$ contains a (known) rational point.
\end{remark}

A particularly useful case is when $C$ is hyperelliptic, $A = J$ is the Jacobian
of~$C$, and we consider $p = 2$.
There is an algorithm that computes $2$-Selmer group $\Sel_2 J$, which is feasible in many
cases, compare~\cite{Stoll2001}.
We discuss this further in Section~\ref{S:ratpts} below in the case
when the curve has a rational Weierstrass point at infinity.

Another useful case (using a slightly more general setting)
is related to ``Elliptic curve Chabauty''. Here $A$
is the Weil restriction of an elliptic curve~$E$ over some number field~$k$
such that there is a non-constant morphism $C \to E$ defined over~$k$.
We give an example of this in Section~\ref{S:ellchab}.


\section{Computing the image under~$q$ of a disk} \label{S:compute_q}

In this section, we discuss in some detail how to find the image under~$q$
of (the image in~$J$ of) a residue disk of~$C(k_p)$.
The basic idea is that $q$ is locally constant on the curve even near points
where $\nu$ becomes infinite (a variant of this was already used in~\cite{PoonenStoll2014}).
To get a practical algorithm out of this idea, we have to produce an explicit
neighborhood on which $q$ is constant. We will do this first away from the
points where $\nu$ becomes infinite and then also on residue disks centered
at a point where $\nu$ becomes infinite.

Since objects over~$k_p$ are products of objects over the various completions~$k_v$
at places~$v$ above~$p$, we will now work over a fixed such completion.
We fix a non-constant morphism $i \colon C \to J$, where $J$ can be
any abelian variety that is spanned by~$i(C)$.
To ease notation, we write $\pi$ instead of~$\pi_v$ for the
map $J(k_v) \to J(k_v)/pJ(k_v)$.

We assume that we can compute $q(P)$ for any given point $P \in J(k_v)$
that is not (too close to) a point of finite order prime to~$p$.
When $p = 2$ and $C$ is hyperelliptic of odd degree and $J$ is the Jacobian,
this can be done by using the halving algorithm of Section~\ref{S:halving} below:
we compute the image of~$P$ in~$L_2^\square$ and record it; if the image is trivial,
then we compute all halves of~$P$ and apply the same procedure to them. Since
by assumption $P$ is not infinitely 2-divisible, the recursion will eventually
stop with an empty set of points still to be considered.

\medskip

The following is essentially immediate from the definitions.

\begin{lemma} \label{L:qconstJ}
  Let $P_1, P_2 \in J(k_v)$ and assume that $P_1 \equiv P_2 \not\equiv 0 \bmod p^{m+1} J(k_v)$.
  Then $\nu(P_1) = \nu(P_2)$ and $q(P_1) = q(P_2)$.
\end{lemma}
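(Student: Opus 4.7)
The statement is really just an unpacking of the definitions, so the plan is short. Write $P_2 = P_1 + 2^{m+1} R$ with $R \in J(\Q_2)$. The hypothesis $P_1 \notin 2^{m+1} J(\Q_2)$ says $\nu(P_1) \le m$.

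First I handle $\nu$. If $P_1 \in 2^k J(\Q_2)$ with $k \le m$, say $P_1 = 2^k Q$, then
\[ P_2 = 2^k Q + 2^{m+1} R = 2^k\bigl(Q + 2^{m+1-k} R\bigr) \in 2^k J(\Q_2), \]
so $\nu(P_2) \ge \nu(P_1)$. Conversely, if $P_2 \in 2^{k+1} J(\Q_2)$ with $k+1 \le m+1$, then $P_1 = P_2 - 2^{m+1} R \in 2^{k+1} J(\Q_2)$, so $\nu(P_1) \ge \nu(P_2)$ as long as $\nu(P_2) \le m+1$; but an inequality $\nu(P_2) \ge m+1$ would combined with the first direction applied in reverse force $\nu(P_1) \ge m+1$, contradicting the hypothesis. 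Hence $\nu(P_1) = \nu(P_2) \le m$, and in particular $P_2 \notin 2^{m+1} J(\Q_2)$ too, so the situation is symmetric in $P_1$ and $P_2$.

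Next I handle $q$, and by symmetry it suffices to show $q(P_1) \subset q(P_2)$. Let $\xi \in q(P_1)$, so there exist $n \ge 0$ and $Q \in J(\Q_2)$ with $2^n Q = P_1$ and $\pi(Q) = \xi$. Since $P_1 \in 2^n J(\Q_2)$ we have $n \le \nu(P_1) \le m$, hence $m+1-n \ge 1$. Set $Q' \colonequals Q + 2^{m+1-n} R$. Then
\[ 2^n Q' = 2^n Q + 2^{m+1} R = P_1 + 2^{m+1} R = P_2, \]
and $2^{m+1-n} R \in 2 J(\Q_2)$, so $\pi(Q') = \pi(Q) = \xi$. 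Therefore $\xi \in q(P_2)$.

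There is no real obstacle here; the only mild subtlety is observing that the hypothesis $P_1 \notin 2^{m+1} J(\Q_2)$ forces any $n$ with $2^n Q = P_1$ to satisfy $n \le m$, which is exactly what is needed so that the correction term $2^{m+1-n} R$ still lies in $2 J(\Q_2)$ and therefore does not change the class modulo $2 J(\Q_2)$.
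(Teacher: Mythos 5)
Your proof is correct and follows essentially the same route as the paper: write $P_2 = P_1 + 2^{m+1}R$, note the hypothesis forces any $n$ with $2^nQ = P_1$ to satisfy $n \le m$, and shift $Q$ by $2^{m+1-n}R \in 2J(\Q_2)$ to land on $P_2$ without changing the class mod $2J(\Q_2)$. The paper treats the $\nu$-equality as immediate where you spell it out, but the substance is identical.
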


\begin{proof}
  The assumptions imply that $P_1, P_2 \notin p^{m+1} J(k_v)$, so whenever there
  are $Q \in J(k_v)$ and $n \ge 0$ such that $p^n Q = P_1$ or~$P_2$, then $n \le m$.
  Let $P' \in J(k_v)$ such that $P_2 = P_1 + p^{m+1} P'$. Then $p^n Q = P_1$
  implies $p^n (Q + p^{m+1-n} P') = P_2$, so that $\nu(P_2) \ge \nu(P_1)$,
  and by symmetry, we obtain equality.

  Let $\xi \in q(P_1)$; then $\xi = \pi(Q)$ for some~$Q$ such that $p^n Q = P_1$ as above.
  Then $n \le m$ and so $\xi = \pi(Q) = \pi(Q + p^{m+1-n} P') \in q(P_2)$ as well.
  This shows that $q(P_1) \subseteq q(P_2)$; the reverse inclusion follows again by symmetry.
\end{proof}

We write $\calO_v$ for the ring of integers in~$k_v$ and $\varpi$ for a uniformizer.
We abuse notation and write $v \colon k_v^\times \to \Z$ for the additive valuation,
normalized such that $v(\varpi) = 1$. Then $e = v(p)$ is the absolute ramification index of~$K_v$.
We fix a proper regular model~$\calC$ of~$C$ over~$\calO_v$.
Let $\calJ$ be the N\'eron model of~$J$ over~$\calO_v$. For $n \ge 1$,
we denote by
\[ K_n \colonequals \ker\bigl(J(k_v) = \calJ(\calO_v) \to \calJ(\calO_v/\varpi^n\calO_v)\bigr) \]
the `higher kernels of reduction'; $K_n$ is also the group of $\varpi^n \calO_v$-points of the
formal group associated to~$\calJ$.

We now fix a residue disk~$D \subseteq C(K_v)$ with respect to~$\calC$;
we will denote an analytic parameterization $D_0 \to D$ by~$\varphi$, where $D_0$
is the open unit disk. Since $i$ induces a morphism from the
smooth part of~$\calC$ to~$\calJ$, it follows that
\begin{equation} \label{E:diffcurve}
 t, t' \in \varpi\calO_v, \quad v(t-t') \ge m \mathrel{\quad\Longrightarrow\quad}
   i(\varphi(t)) - i(\varphi(t')) \in K_m.
\end{equation}
The formal logarithm converges on~$K_1$ and gives a homomorphism $K_1 \to k_v^{\dim J}$.
Restricted to~$K_m$ with $m > e/(p-1)$, the formal exponential provides an inverse,
so that the formal logarithm gives an isomorphism $K_m \to (\varpi^m \calO_v)^{\dim J}$.
It follows that $p K_m = K_{m+e}$; in particular,
\begin{equation} \label{E:2div}
  K_{ne+m} = p^n K_m \subseteq p^n J(k_v) \quad \text{for all $n \ge 0$.}
\end{equation}
This implies together with~\eqref{E:diffcurve} that for $m$ as above and $n \ge 0$,
\begin{equation} \label{E:2divcurve}
  t, t' \in \varpi\calO_v, \quad v(t-t') \ge ne+m \mathrel{\quad\Longrightarrow\quad}
  i(\varphi(t)) \equiv i(\varphi(t')) \bmod p^n J(k_v) .
\end{equation}
In the following we write $\mu$ for $\lfloor e/(p-1) \rfloor + 1$; this is the
smallest choice of~$m$ in the considerations above. If $k_v = \Q_p$ (or, more generally,
an unramified extension of~$\Q_p$, so that $e = 1$),
then $\mu = 1$ when $p$ is odd, and $\mu = 2$ when $p = 2$.

\begin{corollary} \label{C:qconstC}
  Consider $\varphi \colon D_0 \to D \subseteq C(k_v)$ as above, and let $t_0 \in \varpi\calO_v$
  be such that $\nu(i(\varphi(t_0))) \le n$. Then for all $t$ with $v(t-t_0) \ge e(n+1)+\mu$,
  we have
  \[ \nu(i(\varphi(t))) = \nu(i(\varphi(t_0))) \qquad\text{and}\qquad
     q(i(\varphi(t))) = q(i(\varphi(t_0))).
  \]

  More generally, let $\Gamma \subseteq J(\Q_2)$ be a subgroup.
  If $\max \nu(i(\varphi(t_0)) + \Gamma) \le n$, then for all $t$ with $v(t-t_0) \ge e(n+1)+\mu$,
  we have
  \[ \max \nu(i(\varphi(t)) + \Gamma) = \max \nu(i(\varphi(t_0)) + \Gamma) \qquad\text{and}\qquad
     q(i(\varphi(t)) + \Gamma) = q(i(\varphi(t_0)) + \Gamma).
  \]
\end{corollary}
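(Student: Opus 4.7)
The plan is to reduce both claims to Lemma~\ref{L:qconstJ} via the congruence~\eqref{E:2divcurve}; the corollary is really just a packaging of these two ingredients together with a careful bookkeeping of the index shifts.

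For the first (unmixed) claim, I would set $P_1 := i(\varphi(t_0))$ and $P_2 := i(\varphi(t))$. The hypothesis $\nu(P_1) \le m$ says exactly that $P_1 \notin 2^{m+1} J(\Q_2)$. The other hypothesis $v_2(t - t_0) \ge m+3$ is the same as $v_2(t - t_0) \ge (m+1) + 2$, which is precisely what~\eqref{E:2divcurve} needs (with $m$ there replaced by $m+1$) in order to deliver $P_1 \equiv P_2 \bmod 2^{m+1} J(\Q_2)$. Lemma~\ref{L:qconstJ} then immediately yields both $\nu(P_2) = \nu(P_1)$ and $q(P_2) = q(P_1)$.

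For the more general version with $\Gamma$, I would argue one translate at a time. The hypothesis $\max \nu(i(\varphi(t_0)) + \Gamma) \le m$ means that for every $\gamma \in \Gamma$ the translate $i(\varphi(t_0)) + \gamma$ lies outside $2^{m+1} J(\Q_2)$. Since the translates by~$\gamma$ preserve any congruence mod $2^{m+1} J(\Q_2)$, the congruence from~\eqref{E:2divcurve} gives $i(\varphi(t)) + \gamma \equiv i(\varphi(t_0)) + \gamma \bmod 2^{m+1} J(\Q_2)$. Applying Lemma~\ref{L:qconstJ} to this pair for each $\gamma$ yields both $\nu(i(\varphi(t)) + \gamma) = \nu(i(\varphi(t_0)) + \gamma)$ and $q(i(\varphi(t)) + \gamma) = q(i(\varphi(t_0)) + \gamma)$. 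Taking the maximum over $\gamma \in \Gamma$ in the first equality, and the union over $\gamma \in \Gamma$ in the second, gives the two stated equalities.

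There is no real obstacle here; the proof is essentially mechanical once Lemma~\ref{L:qconstJ} and the curve-to-Jacobian congruence~\eqref{E:2divcurve} are in hand. The only point that deserves attention is the bookkeeping behind the constant~$3$: the ``$+2$'' comes from the loss in~\eqref{E:2divcurve} (which ultimately reflects $2 K_n = K_{n+1}$ only starting at $n \ge 2$), and the additional ``$+1$'' is needed because Lemma~\ref{L:qconstJ} requires congruence modulo $2^{m+1} J(\Q_2)$ rather than modulo $2^m J(\Q_2)$. Adding these gives the hypothesis $v_2(t - t_0) \ge m+3$.
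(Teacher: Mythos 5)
Your proof is correct and follows the same route as the paper: apply the curve-to-Jacobian congruence~\eqref{E:2divcurve} (with the index shifted by one) to get $i(\varphi(t)) \equiv i(\varphi(t_0)) \bmod 2^{m+1} J(\Q_2)$, invoke Lemma~\ref{L:qconstJ}, and handle the $\Gamma$-version by translating by each $\gamma \in \Gamma$ separately before taking the maximum and the union. Your accounting for the constant $3$ matches the paper's implicit bookkeeping exactly.
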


\begin{proof}
  By \eqref{E:2divcurve}, we have $i(\varphi(t)) \equiv i(\varphi(t_0)) \bmod p^{n+1} J(k_v)$.
  The first claim now follows from Lemma~\ref{L:qconstJ}.
  The second claim follows from the first by considering $i(\varphi(t)) + \gamma$
  for each $\gamma \in \Gamma$ separately, and applying the first claim to the
  shifted embedding $P \mapsto i(P) + \gamma$.
\end{proof}

If the image of the disk~$D$ in~$J$ does not contain a point
of finite order prime to~$p$, then $\nu$ will be bounded on~$D$.
Corollary~\ref{C:qconstC} then provides a partition
of~$D$ into finitely many sub-disks such that~$q \circ i$ is constant on each of them.
In this way, we can compute~$q(i(D))$.
In a similar way, this allows us to compute $q(i(D) + \Gamma)$ if
$i(D)$ does not meet $\cl(\Gamma) + J(k_v)[p']$, where $G[p']$ denotes
the subgroup of an abelian group~$G$ consisting of elements of finite order prime to~$p$;
compare Lemma~\ref{L:nugammainf}.

\medskip

We now consider the situation when $D$ contains a point~$P_0$ such that
$i(P_0) \in J(k_v)[p']$. In this case, the result above will not produce
a finite partition into sub-disks, so we need to have an explicit estimate
for the size of the pointed disk around~$P_0$ on which~$q \circ i$ is constant.
Without loss of generality, $i(P_0) = 0$.
We also assume that $\varphi(0) = P_0$, so that $i(\varphi(0)) = 0 \in J$.

In the following, we write $n_\tors$ for the smallest $n \ge 0$
such that $J(k_v)[p^\infty] \subseteq J[p^n]$. In other words, $p^{n_\tors}$
is the exponent of the $p$-power torsion subgroup~$J(k_v)[p^\infty]$.

\begin{lemma} \label{L:qp=q2p}
  Let $P \in J(k_v)$.
  \begin{enumerate}[\upshape(1)]
    \item If $n_\tors = 0$, then $\nu(pP) = \nu(P) + 1$
          and $q(P) \subseteq q(pP) \subseteq q(P) \cup \{0\}$.
    \item $\nu(P) > n_\tors$, then $\nu(pP) = \nu(P) + 1$ and $q(pP) = q(P)$.
  \end{enumerate}
\end{lemma}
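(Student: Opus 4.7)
The plan is to reduce both parts of the lemma to a common torsion-killing device: whenever a putative failure of the claim arises, it produces a $2$-torsion obstruction $T \in J(\Q_2)[2]$, and the hypothesis (either $n_\tors = 0$ in~(1), or $\nu(P) > n_\tors$ in~(2)) will force $T = 0$.

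First I would dispose of the easy direction. If $2^n Q = P$, then $2^{n+1} Q = 2P$, so $q(P) \subset q(2P)$ holds unconditionally, as does the inequality $\nu(2P) \ge \nu(P)+1$. Writing $m \colonequals \nu(P)$ and $P = 2^m P'$ with $P' \notin 2J(\Q_2)$, I would establish the equality $\nu(2P) = m+1$ by contradiction: if $2P = 2^{m+2}Q$, then $T \colonequals P - 2^{m+1}Q \in J(\Q_2)[2]$. In case~(1), $J(\Q_2)[2] = 0$ gives $T = 0$, contradicting $\nu(P) = m$. In case~(2), factor $T = 2^m(P' - 2Q) =: 2^m T'$; since $2T = 0$, one has $T' \in J(\Q_2)[2^{m+1}] \subset J[2^{n_\tors}]$, whence $T = 2^{m-n_\tors}(2^{n_\tors} T') = 0$ using $m > n_\tors$.

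For the remaining $q$-inclusions, I take $\pi(Q) \in q(2P)$ with $2^n Q = 2P$. The case $n = 0$ gives $\pi(Q) = 0$, which accounts for the $\{0\}$ in case~(1), and lies in $q(P)$ in case~(2) because $\nu(P) \ge 1$ gives $\pi(P) = 0$. For $n \ge 1$, write $2^{n-1} Q = P + T$ with $T \in J(\Q_2)[2]$; in case~(1), $T = 0$ and $\pi(Q) \in q(P)$ directly. In case~(2), I would split on whether $n - 1 \ge m$ or $n \le m$. The first possibility reduces to the same torsion-killing argument as above, forcing $T = 0$.

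The main obstacle will be the remaining subcase $n \le m$ in case~(2), where constructing an explicit $Q'$ with $2^{n_\tors} Q' = P$ and $\pi(Q') = \pi(Q)$ requires a genuine idea. Here the equation $2^n Q = 2^{m+1} P'$ forces $Q = 2^{m+1-n} P' + T_0$ with $T_0 \in J(\Q_2)[2^n] \subset J[2^{n_\tors}]$, so that $\pi(Q) = \pi(T_0)$. I would then exhibit the witness $Q' \colonequals 2^{m-n_\tors} P' + T_0$, which is legitimate precisely because $m > n_\tors$ supplies enough $2$-adic slack in the $P'$-part to absorb the torsion correction $T_0$ while keeping $Q' \in J(\Q_2)$; one then checks $2^{n_\tors} Q' = P$ and $\pi(Q') = \pi(T_0) = \pi(Q)$, so $\pi(Q) \in q(P)$, completing the inclusion $q(2P) \subset q(P)$.
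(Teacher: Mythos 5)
Your argument is correct and is essentially the paper's own proof: both rest on the observation that any $2$-power torsion obstruction lies in $J(\Q_2)[2^{n_\tors}]$ and is therefore annihilated once one peels off a factor $2^{n_\tors}$ from the (sufficiently $2$-divisible) point $P$; the paper organizes the case split around $n$ versus $n_\tors+1$ after writing $P = 2^{n_\tors+1}P_0$, while you split around $n$ versus $\nu(P)$, but the witnesses produced are the same. The only (easily repaired) loose end is that in part~(2) the hypothesis $\nu(P) > n_\tors$ also allows $\nu(P) = \infty$ (i.e.\ $P$ of finite odd order), where your normalization $P = 2^m P'$ with $P' \notin 2J(\Q_2)$ is unavailable; writing $P = 2^{n_\tors+1}P_0$ as in the paper, or replacing $m$ by any sufficiently large finite integer, covers that degenerate case as well.
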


\begin{proof}
  Since $p^n Q = P$ implies $p^{n+1} Q = pP$, the inclusion $q(P) \subseteq q(pP)$
  is clear, as is the inequality $\nu(pP) \ge \nu(P) + 1$, for arbitrary~$P$.

  First assume that $n_\tors = 0$. Consider $\xi \in q(pP)$, so there are
  $Q \in J(k_v)$ and $n \ge 0$ such that $p^n Q = p P$ and $\pi(Q) = \xi$.
  If $n = 0$, then $\xi = \pi(Q) = \pi(pP) = 0$. If $n \ge 1$, then we must have $p^{n-1} Q = P$
  (since there is no nontrivial $p$-torsion), so $\xi = \pi(Q) \in q(P)$.
  Taking $n = \nu(pP)$ shows that $\nu(P) \ge \nu(pP) - 1$.

  Now assume that $\nu(P) > n_\tors$ and
  write $P = p^{n_\tors+1} P_0$ for $P_0 \in J(k_v)$.
  We first show that $\pi(J(k_v)[p^\infty]) \subseteq q(P)$. For this, let
  $T \in J(k_v)[p^\infty] = J(k_v)[p^{n_\tors}]$.
  Then $p^{n_\tors}(T + p P_0) = P$, so
  $\pi(T) = \pi(T + pP_0) \in q(P)$ by~\eqref{E:defq}.

  To show that $q(pP) \subseteq q(P)$, let $\xi \in q(pP)$, so there are
  some $Q \in J(k_v)$ and~$n \ge 0$ with $\pi(Q) = \xi$
  such that $p^n Q = p P = p^{n_\tors+2} P_0$. If $n \le n_\tors + 1$,
  then it follows that $Q = p^{n_\tors+2-n} P_0 + T$ with $T \in J(k_v)[p^\infty]$,
  so $\xi = \pi(Q) = \pi(T) \in q(P)$ by the argument above. If $n \ge n_\tors + 2$,
  then $p^{n-n_\tors-2} Q = P_0 + T$ with $T \in J(k_v)[p^{n_\tors}]$, and therefore
  $p^{n-1} Q = p^{n_\tors+1} P_0 = P$, so $\xi = \pi(Q) \in q(P)$.
  Carrying out this argument with $n = \nu(pP)$ and a suitable~$Q$,
  we also get that $\nu(P) \ge \nu(pP) - 1$.
\end{proof}

For $m \ge 1$ we define
\[ N(m) = 1 + \min \Bigl\{\Bigl\lfloor\frac{km-v(k)-\mu}{e}\Bigr\rfloor : k \ge 2\Bigr\}. \]
Then $N(m) e \ge 2 m - a$ for some constant~$a$.

\begin{lemma} \label{L:2iphit=iphi2t}
  Assume that $v(t) = m \ge 1$. Then
  \[ p \cdot i(\varphi(t)) \equiv i(\varphi(pt)) \bmod p^{N(m)} J(k_v) . \]
\end{lemma}

\begin{proof}
  In terms of formal group coordinates, we can write
  \[ \log_J i(\varphi(t)) = c_1 t + \frac{c_2}{2} t^2 + \frac{c_3}{3} t^3 + \ldots \]
  with $c_1, c_2, c_3, \ldots \in \calO_v^{\dim J}$. We find that
  \begin{align*}
    \log_J\bigl(p i(\varphi(t)) - i(\varphi(pt))\bigr)
      &= p \log_J i(\varphi(t)) - \log_J i(\varphi(pt)) \\
      &= c_2 \frac{p-p^2}{2} t^2 + c_3 \frac{p-p^3}{3} t^3 + c_4 \frac{p-p^4}{4} t^4 + \ldots
       \in (\varpi^{N(m)e + \mu} \calO_v)^{\dim J},
  \end{align*}
  by the definition of~$N(m)$. We have that
  \[ p \cdot i(\varphi(t)) - i(\varphi(pt)) \in p K_m + K_{m+e} = K_{m+e} \subseteq K_{\mu}, \]
  so we are in the domain of the isomorphism induced by the formal logarithm,
  which allows us to conclude that
  $p i(\varphi(t)) - i(\varphi(pt)) \in K_{N(m)e + \mu}$.
  The claim then follows from~\eqref{E:2div}.
\end{proof}

\begin{corollary} \label{C:2hyp}
  If we have $p = 2$ and $e = 1$ (which is the case when $k_v = \Q_2$)
  in the situation of Lemma~\ref{L:2iphit=iphi2t}, then
  \[ 2 i(\varphi(t)) \equiv i(\varphi(2t)) \bmod 2^{2m-2} J(\Q_2) . \]
  If in addition $C$ is hyperelliptic, $\varphi(0)$ is a Weierstrass point
  and $\varphi(-t) = \iota(\varphi(t))$, where $\iota$ is the hyperelliptic
  involution, then
  \[ 2 i(\varphi(t)) \equiv i(\varphi(2t)) \bmod 2^{3m-1} J(\Q_2) . \]
\end{corollary}

\begin{proof}
  If $p = 2$ and $e = 1$, then $\mu = 2$ and so $N(m) = 2m-2$ in Lemma~\ref{L:2iphit=iphi2t}
  (the minimum is attained for $k = 2$).

  Under the additional assumptions on~$C$ and~$\varphi$, it follows that
  ${\log_J} \circ i \circ \varphi$ is odd, so that $c_{2n} = 0$ for all $n \ge 1$
  in the proof of Lemma~\ref{L:2iphit=iphi2t}.. We then obtain the better bound in
  the same way as in that proof, noting that we can restrict to odd~$k$
  (which have $v(k) = 0$).
\end{proof}

For our fixed $\varphi$ and~$i$, we define, for $m \ge 1$,
\[ n_m \colonequals \max\{\nu(i(\varphi(t))) : t \in \varpi \calO_v, v(t) = m\} . \]

\begin{lemma} \label{L:nmbound}
  There is some $b \in \Z$ such that $n_m e \le m + b$ for all $m \ge 1$.
\end{lemma}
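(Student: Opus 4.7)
The plan is to iterate the doubling relation of Lemma~\ref{L:2iphit=iphi2t}: combined with the doubling formula for~$\nu$ in Lemma~\ref{L:qp=q2p} and the locality from Lemma~\ref{L:qconstJ}, it yields an exact recursion $n_{m+1} = n_m + 1$ once $m$ is large enough, from which the result is immediate.

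I would first establish a bootstrap: for $m \ge m_0$ (some threshold), $n_m \le 2m - 4$ and $n_m > n_\tors$. Expanding $\log_J i(\varphi(t)) = c_1 t + (c_2/2)t^2 + \cdots$ as in Lemma~\ref{L:2iphit=iphi2t} with $c_k \in \Z_2^g$, the leading term $c_1 t$ (with $c_1 \ne 0$, which holds whenever $i$ is unramified at~$P_0$, as in the Abel--Jacobi setting) dominates for $v_2(t) = m$ large. This places $i(\varphi(t))$ in $K_{m+v_2(c_1)} \setminus K_{m+v_2(c_1)+1}$. Since $\log_J$ induces an isomorphism $K_2 \simeq (2^2\Z_2)^g$ and the index $[J(\Q_2) : K_2]$ is finite, the upper bound $\nu(i(\varphi(t))) \le v_2(\log_J i(\varphi(t))) + O(1) = m + O(1)$ is strictly less than $2m - 4$ for $m$ large, and exceeds $n_\tors$.

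Next, for $v_2(t) = m \ge m_0$, Lemma~\ref{L:2iphit=iphi2t} gives $2 i(\varphi(t)) - i(\varphi(2t)) \in 2^{2m-2} J(\Q_2)$. By Lemma~\ref{L:qp=q2p}(2), $\nu(2 i(\varphi(t))) = \nu(i(\varphi(t))) + 1 \le n_m + 1 \le 2m - 3$. Applying Lemma~\ref{L:qconstJ} with this congruence (so its `$m$' is $2m - 3$) forces $\nu(i(\varphi(2t))) = \nu(i(\varphi(t))) + 1$. Since every $t'$ with $v_2(t') = m + 1$ equals $2t$ for some $t$ with $v_2(t) = m$, taking maxima gives $n_{m+1} = n_m + 1$ for all $m \ge m_0$. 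Iterating, $n_m = m + (n_{m_0} - m_0)$ for $m \ge m_0$.

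For each of the finitely many $m$ with $1 \le m < m_0$, the set $\{t \in 2\Z_2 : v_2(t) = m\}$ is compact and bounded away from~$0$; $\nu \circ i \circ \varphi$ is locally constant on it (by Lemma~\ref{L:qconstJ}) and the exceptional set of infinitely $2$-divisible points is discrete in this compact set, so $n_m$ is finite. Setting $b = \max\bigl(n_{m_0} - m_0,\; \max_{1 \le m < m_0}(n_m - m)\bigr)$ then gives the claimed uniform bound. The hard part is the bootstrap: the recursion only propagates the linear growth rate, so one must already know that $n_m$ grows like $m$ rather than like $km$ for some $k \ge 2$, and this boils down to $c_1 \ne 0$ in the formal-log expansion, i.e., to $i$ being unramified at the odd-order point~$P_0$.
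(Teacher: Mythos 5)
Your proof is correct and its essential content coincides with the paper's: the ``bootstrap'' step --- placing $i(\varphi(t))$ in $K_{m+v_2(c_1)}\setminus K_{m+v_2(c_1)+1}$ via the formal logarithm and then converting this into $\nu(i(\varphi(t)))\le m+O(1)$ using the finiteness of $J(\Q_2)/K_2$ --- is exactly the two-step argument the paper gives, and it already yields the lemma for all large $m$, so the subsequent doubling recursion $n_{m+1}=n_m+1$ built on Lemmas~\ref{L:2iphit=iphi2t}, \ref{L:qp=q2p} and~\ref{L:qconstJ} is logically superfluous (as you yourself observe, it only propagates a linear bound one must already have). Your explicit remark that the argument needs $c_1\neq 0$, i.e.\ that $i$ is unramified at $\varphi(0)$, is a correct observation of a hypothesis the paper leaves implicit.
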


\begin{proof}
  First note that $i(\varphi(t)) \in K_{m+a} \setminus K_{m+a+1}$
  for some fixed~$a$ when $m$ is sufficiently large, where $a$ is the valuation of~$c_1$
  in the proof of Lemma~\ref{L:2iphit=iphi2t} above.

  Next, let $a'$ denote the $p$-adic valuation of the exponent of the (finite)
  quotient group $J(k_v)/K_{\mu}$. Then for $n \ge \mu$ and $P \in K_n \setminus K_{n+1}$,
  we have $\nu(P) e \le n - \mu + a' e$. To see this, write $P = p^{\nu(P)} Q$ for some
  $Q \in J(k_v)$; assume $\nu(P) e > n - \mu + a' e$. Then $p^{a'} Q$ maps to an element
  of order prime to~$p$ in $J(k_v)/K_{\mu}$, and since $P = p^{\nu(P)-a'} (p^{a'} Q) \in K_{\mu}$,
  it follows that $p^{a'} Q \in K_{\mu}$ (its class in $J(k_v)/K_{\mu}$ has order prime to~$p$
  and a power of~$p$ at the same time, so it must be zero).
  This in turn implies, using~\eqref{E:2div},
  \[ P = p^{\nu(P)} Q = p^{\nu(P) - a'} \cdot (p^{a'} Q) \in K_{\mu + (\nu(P) - a')e}
       \subseteq K_{n+1} ,
  \]
  a contradiction. So $\nu(P) e \le n - \mu + a' e$ as claimed.

  Finally, combining these arguments, we see that $n_m e \le m + (a + a' e - \mu)$
  for large~$m$, which implies the claim.
\end{proof}

\begin{lemma} \label{L:qconstnear0}
  Let $m_0 = 1$ if $n_\tors = 0$ and $m_0 = n_\tors e + \mu + e$ otherwise.
  There is some $m \ge m_0$ such that $N(m) \ge n_m + 1$.
  For any such~$m$, we have have that
  \[ q(i(\varphi(\{t : m \le v(t) < \infty\})))
       = q(i(\varphi(\{t : v(t) = m\}))) \cup \{0\}.
  \]
\end{lemma}

\begin{proof}
  By Lemma~\ref{L:nmbound}, $n_m e \le m + b$ for some~$b$; on the other hand,
  $N(m) e \ge 2 m - a$ for some~$a$, so whenever
  $m \ge a + b + e$, the inequality $N(m) \ge n_m + 1$ holds. Fix such an~$m$ that also
  satisfies $m \ge m_0$. We now show that if $v(t) = m$, then
  \[ \nu(i(\varphi(p^n t))) = \nu(i(\varphi(t))) + n \qquad\text{and}\qquad
     q(i(\varphi(p^n t))) \subseteq q(i(\varphi(t))) \cup \{0\}
  \]
  for all $n \ge 0$, which implies the claim (note that $0 \in q(i(P))$ if $P$
  is sufficiently close to~$P_0$).
  Note that $m \ge n_\tors e + \mu + e$ implies $n_m \ge n_\tors + 1$ by~\eqref{E:2divcurve}
  (taking $t' = 0$).
  We proceed by induction on~$n$, the case $n = 0$ being trivial. So consider $n \ge 1$.
  By the inductive assumption, we have
  \[ \nu(i(\varphi(p^{n-1} t))) = \nu(i(\varphi(t))) + n - 1 \le n_m + n - 1
     \quad\text{and}\quad q(i(\varphi(p^{n-1} t))) \subseteq q(i(\varphi(t))) \cup \{0\} .
  \]
  By Lemma~\ref{L:2iphit=iphi2t}, this implies
  $p \cdot i(\varphi(p^{n-1} t)) \equiv i(\varphi(p^n t)) \bmod p^{N(m + (n-1)e)} J(k_v)$,
  and since
  \[ N(m + (n-1)e) \ge N(m) + 2n - 2 \ge n_m + n \ge \nu(i(\varphi(p^{n-1} t))) + 1 \]
  and $n_m \ge n_\tors + 1$ in case $n_\tors > 0$,
  by Lemmas \ref{L:qconstJ} and~\ref{L:qp=q2p} it follows that
  \[ \nu(i(\varphi(p^n t))) = \nu(p \cdot i(\varphi(p^{n-1} t))) = \nu(i(\varphi(p^{n-1} t))) + 1
                            = \nu(i(\varphi(t))) + n
  \]
  and
  \[ q(i(\varphi(p^n t))) = q(p \cdot i(\varphi(p^{n-1} t)))
                          \subseteq q(i(\varphi(p^{n-1} t))) \cup \{0\}
                          \subseteq q(i(\varphi(t))) \cup \{0\} . \qedhere
  \]
\end{proof}

\begin{corollary} \label{C:qconstnear0}
  If $p = 2$ and $e = 1$ in the situation of Lemma~\ref{L:qconstnear0}, then
  we take $m_0 = 1$ if $n_\tors = 0$ and $m_0 = n_\tors + 3$ otherwise.
  There is then some $m \ge m_0$ such that $2m - 3 \ge n_m$.
  For any such~$m$, we have have that
  \[ q(i(\varphi(\{t : m \le v(t) < \infty\})))
       = q(i(\varphi(\{t : v(t) = m\}))) \cup \{0\}.
  \]
  If the curve is hyperelliptic, $P_0 = \varphi(0)$ is a Weierstrass point
  and $\varphi(-t) = \iota(\varphi(t))$, where $\iota$ is the hyperelliptic involution,
  then the condition above can be replaced by $3m - 2 \ge n_m$.
\end{corollary}

\begin{proof}
  This follows again from $\mu = 2$ and $N(m) \ge 2m-2$.
  The improved statement under the additional assumptions follows in the same way
  as for Corollary~\ref{C:2hyp}.
\end{proof}

This now allows us to find $q(i(D))$ when $0 \in i(D)$. First we use Corollary~\ref{C:qconstC}
to determine $q(i(\varphi(\{t : 1 \le v(t) \le m_0-1\})))$.
Then for $m = m_0, m_0 + 1, \ldots$, we find in a similar way
$n_m$ and~$q(i(\varphi(\{t : v(t) = m\})))$. As soon as $n_m + 1 \le N(m)$,
we can stop the computation; we then have
$q(i(D \setminus \{P_0\})) = q(i(\varphi(\{t : 1 \le v(t) \le m\}))) \cup \{0\}$.

We state a special case for later use.

\begin{corollary} \label{C:hypdisk}
  Assume that $C$ is hyperelliptic, of good reduction mod~$2$, and satisfies
  $J(\Q_2)[2] = 0$ and $J(\F_2)[2] = 0$. Let $P_0 \in C(\Q_2)$, choose a parameterization
  $\varphi$ of a residue disk~$D$ centered at~$P_0$ and let $i_{P_0}$ denote the embedding
  of~$C$ into~$J$ sending $P_0$ to~$0$. Then
  \begin{enumerate}[\upshape(1)]\addtolength{\itemsep}{3pt}
    \item $q(i_{P_0}(D)) = q(i_{P_0}(\varphi(2\Z_2^\times \cup 4\Z_2^\times))) \cup \{0\}$, and
    \item if $P_0$ is a Weierstrass point and $\varphi$ satisfies $\varphi(-t) = \iota(\varphi(t))$,
          then \\
          $q(i_{P_0}(D)) = q(i_{P_0}(\varphi(2\Z_2^\times))) \cup \{0\}$.
  \end{enumerate}
\end{corollary}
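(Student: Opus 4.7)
The plan is to apply Lemma~\ref{L:qconstnear0} with the smallest values of~$m$ that are permissible, which reduces the corollary to pinning down $n_m$ for small~$m$.  First I would observe that $J(\Q_2)[2] = 0$ together with good reduction and $J(\F_2)[2] = 0$ forces $J(\Q_2)[2^\infty] = 0$: the kernel of reduction $K_1$ is pro-$2$ and our hypothesis kills its $2$-torsion, while $J(\F_2)$ has odd order.  Hence $n_\tors = 0$ and $m_0 = 1$ in the notation of Lemma~\ref{L:qconstnear0}, so it suffices to verify the weaker inequality $2m - 3 \ge n_m$ at $m = 2$ for part~(1), and the sharper inequality $3m - 2 \ge n_m$ at $m = 1$ for part~(2).

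Next I would establish $n_m = m - 1$.  Via the odd-order splitting $J(\Q_2) \cong K_1 \oplus J(\F_2)$, the group $K_1$ is torsion-free pro-$2$.  Good reduction gives $K_2 \cong (4\Z_2)^g$ via the formal logarithm and $K_1/K_2 \cong \F_2^g$; a short extension argument then shows $K_1 \cong \Z_2^g$ as an abelian group, so $K_n = 2^{n-1} K_1$ for every $n \ge 1$.  This implies that $P \in K_n \setminus K_{n+1}$ is equivalent to $\nu(P) = n - 1$.  Since $i_{P_0}$ is a closed immersion at~$P_0$ and $\varphi$ is induced by a uniformizer on the smooth $\Z_2$-model of~$C$, the formal power series $F_j(t)$ describing $i_{P_0} \circ \varphi$ in coordinates on~$\hat{\calJ}$ have some leading coefficient~$a_{1,j}$ in~$\Z_2^\times$; a direct look at valuations then gives $i_{P_0}(\varphi(t)) \in K_m \setminus K_{m+1}$ whenever $v_2(t) = m$, hence $\nu(i_{P_0}(\varphi(t))) = m - 1$ and therefore $n_m = m - 1$.

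With $n_2 = 1$, part~(1) follows from Lemma~\ref{L:qconstnear0} applied at $m = 2$: the lemma produces $q\bigl(i_{P_0}(\varphi(\{t : v_2(t) \ge 2\}))\bigr) = q\bigl(i_{P_0}(\varphi(4\Z_2^\times))\bigr) \cup \{0\}$, and combining this with the disjoint piece $v_2(t) = 1$ and the base point $\varphi(0) = P_0$ (which contributes $q(0) = \{0\}$) yields the stated equality.  For part~(2), the Weierstrass hypothesis $\varphi(-t) = \iota(\varphi(t))$ activates the improved condition $3m - 2 \ge n_m$, already satisfied at $m = 1$ since $n_1 = 0$, and the lemma directly gives the description with the single piece $\varphi(2\Z_2^\times)$.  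The main obstacle I expect is the structural claim $K_n = 2^{n-1} K_1$ for all $n \ge 1$; only the case $n \ge 2$ is recorded in~\eqref{E:2div}, and Lemma~\ref{L:nmbound}'s bound is merely asymptotic, so the small-$m$ version needed here must be extracted from the torsion-freeness of~$K_1$ directly.
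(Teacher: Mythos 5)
Your proof is correct and takes essentially the same route as the paper: both reduce to Lemma~\ref{L:qconstnear0} with $m=2$ for part~(1) and $m=1$ for part~(2) after verifying $n_\tors=0$ and $n_m\le m-1$. The only difference is that you prove the key input $\nu(P)=m-1$ for $P\in K_m\setminus K_{m+1}$ from scratch (via torsion-freeness of $K_1$, the index count giving $2K_1=K_2$, and the unit leading coefficient of $i_{P_0}\circ\varphi$), where the paper obtains the slightly weaker but sufficient bound $\nu(P)\in\{m-2,m-1\}$ by citing \cite{PoonenStoll2014}*{Lemma~10.1} and taking $b=-1$ in Lemma~\ref{L:nmbound}.
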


\begin{proof}
  Since $k_v = \Q_2$, we are in the case $p = 2$ and~$e = 1$.
  The assumptions on $2$-torsion over~$\Q_2$ and over~$\F_2$ imply that $n_\tors = 0$,
  which in turn implies that for $P \in K_m \setminus K_{m+1}$,
  we have $\nu(P) \in \{m-2, m-1\}$,
  for all $m \ge 1$, compare~\cite{PoonenStoll2014}*{Lemma~10.1} and its proof.
  Also, $K_1$ has odd index in~$J(\Q_2)$.
  We can therefore take $b = -1$ in Lemma~\ref{L:nmbound}. Then $m = 2$ is a suitable value
  in Corollary~\ref{C:qconstnear0}. When $P_0$ is a Weierstrass point, then by
  Corollary~\ref{C:qconstnear0} again even $m = 1$ is sufficient.
\end{proof}

We now give a version of Lemma~\ref{L:qconstnear0} that applies when we work
with a subgroup~$\Gamma$ that does not consist of torsion points only.
We restrict here to the case $k_v = \Q_2$; a general statement can be obtained
and proved along the same lines, with changes similar to the statement and
proof of Lemma~\ref{L:qconstnear0}.

We let $\Gamma \subseteq J(\Q_2)$ be a subgroup such that
$\Gamma \cap 2 J(\Q_2) = 2\Gamma$ and such that $\cl(\Gamma)$ is not of finite
index in~$J(\Q_2)$. We define
\[ n_{m,\Gamma} \colonequals \sup\{\nu(i(\varphi(t)) + \gamma)
                                    : \gamma \in \Gamma, t \in 2\Z_2, v(t) = m\} .
\]

\begin{lemma} \label{L:qconstnear0gamma}
  Let $m_0 = 2$ if $n_\tors = 0$ and $m_0 = n_\tors + 3$ otherwise.
  Assume that there is $m \ge m_0$ such that $2m - 3 \ge n_{m,\Gamma}$.
  For any such~$m$, we have have that
  \[ q(i(\varphi(\{t : m \le v(t) < \infty\})) + \Gamma)
       = q(i(\varphi(\{t : v(t) = m\})) + \Gamma) \cup q(\Gamma).
  \]
  If the curve is hyperelliptic, $P_0 = \varphi(0)$ is a Weierstrass point
  and $\varphi(-t) = \iota(\varphi(t))$, where $\iota$ is the hyperelliptic involution,
  then the condition above can be replaced by $3m - 2 \ge n_{m,\Gamma}$.
\end{lemma}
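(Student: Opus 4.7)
I would follow the inductive scheme of Lemma~\ref{L:qconstnear0} essentially verbatim, with the key new ingredient being that the hypothesis $\Gamma \cap 2 J(\Q_2) = 2\Gamma$ makes the halving procedure interact cleanly with $\Gamma$-cosets.

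For the inclusion $\supseteq$, the containment of $q(i(\varphi(\{t : v_2(t) = m\})) + \Gamma)$ in the left-hand side is immediate. For the containment of $q(\Gamma)$: given $\gamma_0 \in \Gamma$ and $\xi \in q(\gamma_0)$ witnessed by $2^k Q_0 = \gamma_0$, choose $t^\star$ with $v_2(t^\star) \geq \max(m, k+3)$. Using $K_{j+2} = 2^j K_2$, we can write $i(\varphi(t^\star)) = 2^k R$ with $R \in K_{v_2(t^\star)-k} \subseteq K_3 \subseteq 2 J(\Q_2)$; then $2^k(Q_0 + R) = \gamma_0 + i(\varphi(t^\star))$ and $\pi(Q_0 + R) = \pi(Q_0) = \xi$, placing $\xi \in q(i(\varphi(t^\star)) + \gamma_0)$.

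For the inclusion $\subseteq$, I would prove the pointwise strengthening that for each $t_0$ with $v_2(t_0) = m$ and each $n \geq 0$,
\[
  q(i(\varphi(2^n t_0)) + \Gamma) \subseteq q(i(\varphi(t_0)) + \Gamma) \cup q(\Gamma),
\]
by induction on $n$ (the base case $n=0$ is trivial; summing over $t_0$ and $n$ then yields the lemma). In the step, set $s = 2^n t_0$ and apply Lemma~\ref{L:2iphit=iphi2t} to get $i(\varphi(2s)) \equiv 2 i(\varphi(s)) \bmod 2^{2(n+m)-2} J(\Q_2)$. Given $\gamma \in \Gamma$, the hypothesis $\Gamma \cap 2J(\Q_2) = 2\Gamma$ produces a dichotomy. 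If $\gamma \notin 2\Gamma$, then $\gamma \notin 2 J(\Q_2)$; since both $2 i(\varphi(s))$ and the error term lie in $2 J(\Q_2)$ (using $n+m \geq 2$ and $K_j \subseteq 2 J(\Q_2)$ for $j \geq 3$), the sum $i(\varphi(2s)) + \gamma$ is not $2$-divisible, so $q(i(\varphi(2s)) + \gamma) = \{\pi(\gamma)\} \subseteq q(\Gamma)$. If instead $\gamma = 2\gamma'$ with $\gamma' \in \Gamma$, setting $R = i(\varphi(s)) + \gamma'$ gives $i(\varphi(2s)) + \gamma \equiv 2R \bmod 2^{2(n+m)-2} J(\Q_2)$; Lemma~\ref{L:qconstJ} transfers $q$ across the congruence to $q(2R)$, and arguing as in Lemma~\ref{L:qp=q2p} (using $\{0\} \subseteq q(\Gamma)$ since $0 \in \Gamma$, and absorbing any $2$-torsion shifts into the $\Gamma$-union via the inductive hypothesis) gives $q(2R) \subseteq q(i(\varphi(s)) + \Gamma) \cup q(\Gamma)$. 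The inductive hypothesis applied at level $n$ then completes the step.

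The main technical obstacle is verifying the size bound $\nu(2R) < 2(n+m) - 2$ needed to invoke Lemma~\ref{L:qconstJ} at every stage of the induction. This reduces to a parallel induction showing $n_{k, \Gamma} \leq n_{m, \Gamma} + (k - m)$ for $k \geq m$ along the lines of Lemma~\ref{L:nmbound}, bootstrapped by the Case~A observation that any $\gamma$ contributing nontrivially ($\nu \geq 1$) to $n_{k, \Gamma}$ must already lie in $2\Gamma$. The dichotomy between $m_0 = 2$ and $m_0 = n_\tors + 3$ mirrors the treatment in Lemma~\ref{L:qconstnear0}: in the former case part~(1) of Lemma~\ref{L:qp=q2p} applies unconditionally, while in the latter $v_2(s) \geq n_\tors + 3$ ensures via~\eqref{E:2divcurve} that $\nu(i(\varphi(s))) \geq m - 2 > n_\tors$, enabling part~(2). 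Finally, the sharper condition $3m - 2 \geq n_{m,\Gamma}$ under the Weierstrass/odd-embedding hypothesis follows from the improved $2^{3m-1}$ congruence in Lemma~\ref{L:2iphit=iphi2t} by the identical argument.
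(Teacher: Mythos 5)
Your proposal follows essentially the same route as the paper's proof: an induction on $n$ showing $q(i(\varphi(2^n t)) + \Gamma) \subset q(i(\varphi(t)) + \Gamma) \cup q(\Gamma)$, driven by Lemma~\ref{L:2iphit=iphi2t} together with Lemmas \ref{L:qconstJ} and~\ref{L:qp=q2p}, and split according to the dichotomy $\gamma \in 2\Gamma$ versus $\gamma \in \Gamma \setminus 2\Gamma$ supplied by the hypothesis $\Gamma \cap 2J(\Q_2) = 2\Gamma$. The only cosmetic difference is that the paper carries the $\nu$-growth statement $\max\nu(i(\varphi(2^n t)) + \Gamma) = \max\nu(i(\varphi(t)) + \Gamma) + n$ inside the same induction, whereas you phrase it as a parallel induction on $n_{k,\Gamma}$; the content is identical.
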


By standard Chabauty-Coleman, the intersection of $i(D)$ with~$\cl(\Gamma)$
is finite. So for $m$ sufficiently large, $i(\varphi(2^m\Z_2))$ will
meet~$\cl(\Gamma)$ only in~$P_0$, hence $n_{m,\Gamma} < \infty$.
So we can hope to find an~$m$ as in the lemma. It is conceivable, however,
that the image of the curve meets~$\cl(\Gamma)$ at~$i(P_0)$ with higher
multiplicity, in which case $n_{m,\Gamma}$ may grow too fast with~$m$.

\begin{proof}
  We show again inductively that if $v(t) = m$, then
  \[ \max \nu(i(\varphi(2^n t)) + \Gamma) = \max \nu(i(\varphi(t)) + \Gamma) + n
     \quad\text{and}\quad
     q(i(\varphi(2^n t)) + \Gamma) \subseteq q(i(\varphi(t)) + \Gamma) \cup q(\Gamma)
  \]
  for all $n \ge 0$ (note that $q(\Gamma) \subseteq q(i(P) + \Gamma)$ if $P$
  is sufficiently close to~$P_0$).
  The case $n = 0$ is trivial. So consider $n \ge 1$.
  By the inductive assumption, we have
  \[ \max \nu(i(\varphi(2^{n-1} t)) + \Gamma)
       = \max \nu(i(\varphi(t)) + \Gamma) + n - 1 \le n_{m,\Gamma} + n - 1 \]
  and
  \[ q(i(\varphi(2^{n-1} t))+ \Gamma) \subseteq q(i(\varphi(t)) + \Gamma) \cup q(\Gamma) .
  \]
  By Corollary~\ref{C:2hyp}, we have
  $2 i(\varphi(2^{n-1} t)) \equiv i(\varphi(2^n t)) \bmod 2^{2m+2n-4} J(\Q_2)$.
  So for every $\gamma \in \Gamma$, we have
  \[ 2 \bigl(i(\varphi(2^{n-1} t)) + \gamma\bigr)
      \equiv i(\varphi(2^n t)) + 2\gamma \bmod 2^{2m+2n-4} J(\Q_2) .
  \]
  Since
  \[ 2m + 2n - 4 \ge n_{m,\Gamma} + n \ge \nu(i(\varphi(2^{n-1} t)) + \gamma) + 1 \]
  and $n_{m,\Gamma} \ge n_\tors + 1$ in case $n_\tors > 0$,
  by Lemmas \ref{L:qconstJ} and~\ref{L:qp=q2p} it follows that
  \[ \nu(i(\varphi(2^n t)) + 2\gamma)
       = \nu(2 (i(\varphi(2^{n-1} t)) + \gamma))
       = \nu(i(\varphi(2^{n-1} t)) + \gamma) + 1
  \]
  and
  \[ q(i(\varphi(2^n t)) + 2\gamma)
       = q(2 (i(\varphi(2^{n-1} t)) + \gamma))
       \subseteq q(i(\varphi(2^{n-1} t)) + \gamma) \cup \{0\}
       \subseteq q(i(\varphi(t)) + \Gamma) \cup q(\Gamma) .
  \]
  Now consider $\gamma \in \Gamma \setminus 2\Gamma$. Since
  $i(\varphi(2^n t)) \in 2^{n+m-2} J(\Q_2)$ and $n+m-2 \ge 1$, we get
  \[ \nu(i(\varphi(2^n t)) + \gamma) = 0 \qquad\text{and}\qquad
     q(i(\varphi(2^n t)) + \gamma) = \{\pi(\gamma)\} \subseteq q(\Gamma) .
  \]
  (We use here that $\gamma \notin 2J(\Q_2)$.)
  Together, these relations imply that
  \[ \max \nu(i(\varphi(2^n t)) + \Gamma) = \max \nu(i(\varphi(t)) + \Gamma) + n \]
  and
  \[ q(i(\varphi(2^n t)) + \Gamma) \subseteq q(i(\varphi(t)) + \Gamma) \cup q(\Gamma) \]
  as claimed.

  The improved statement under the additional assumptions follows again in the same way.
\end{proof}


\section{Determining the set of rational points on odd hyperelliptic curves} \label{S:ratpts}

In this section, we specialize the algorithm formulated in Section~\ref{S:key}
to hyperelliptic curves of odd degree over~$\Q$. So let
\[ C \colon y^2 = f(x) \]
be a hyperelliptic curve, given by a squarefree polynomial $f \in \Z[x]$ of
odd degree $2g + 1$ (then $g$ is the genus of~$C$). We understand $C$
to be the smooth projective model of the affine curve given by the equation;
then $C$ is a nice curve. We write $J$ for the Jacobian of~$C$.
For a point $P_0 \in C(\Q)$ (or $C(\Q_2)$), we
let $i_{P_0} \colon C \to J$ denote the embedding that sends
$P_0$ to the origin of~$J$.

To carry out one of the relevant steps, we must compute $q(P)$ for points $P \in J(\Q_2)$
(where $q(P)$ is defined as above with $p = 2$).
The basic strategy for this was explained in Section~\ref{S:compute_q}.
To implement it, we need to be able to divide by~2 in~$J(\Q_2)$. We consider this
problem in Section~\ref{S:halving} below.

We recall the algorithm for computing the $2$-Selmer group of~$J$,
compare~\cites{Schaefer1995,Stoll2001}. Let $C$ be given by the affine equation
$y^2 = f(x)$ with $f \in \Z[x]$ squarefree and of odd degree~$2g+1$,
where $g$ is the genus of~$C$.
Let $L = \Q[x]/\langle f \rangle$ be the associated \'etale algebra and write~$\theta$
for the image of~$x$ in~$L$. If $A$ is any commutative ring, then we
write $A^\square$ for the group $A^\times/(A^\times)^2$ of square classes
in the multiplicative group~$A^\times$ of~$A$.

For any field extension~$k$ of~$\Q$, there is an isomorphism
\begin{equation} \label{E:kummer}
  H^1(k, J[2]) \stackrel{\simeq}{\To}
    \ker\bigl(N_{(L \otimes_{\Q} k)/k} \colon (L \otimes_{\Q} k)^\square \to k^\square\bigr)
\end{equation}
realizing the Galois cohomology group on the left in a concrete way,
and there is the `Cassels map' or `$x-T$' map
\[ \mu_k \colon J(k) \To J(k)/2 J(k) \injects (L \otimes_{\Q} k)^\square \]
that is induced by evaluating $x - \theta$ (multiplicatively) on divisors whose
support is disjoint from the set of Weierstrass points of~$C$.
The image of~$\mu_k$ is contained in the kernel of the norm map above;
$\mu_k$ is the composition of the connecting map $\delta_k \colon J(k) \to H^1(k, J[2])$
induced by the exact sequence of Galois modules
\[ 0 \To J[2] \To J(\kbar) \stackrel{\cdot 2}{\To} J(\kbar) \To 0 \]
with the isomorphism~\eqref{E:kummer}.
We write $\mu = \mu_{\Q}$, and
for $v$ a place of~$\Q$, we write $L_v = L \otimes_{\Q} \Q_v$
(with $\Q_\infty = \R$ as usual) and set $\mu_v = \mu_{\Q_v}$.

Let $\Sigma$ be the set of places of~$\Q$ consisting of~$2$ and the finite
places~$v$ such that the Tamagawa number of~$J$ at~$v$ is even.
The subgroup $L(\Sigma, 2)$ of~$L^\square$ consists of the elements represented
by $\alpha \in L^\times$ such that the fractional ideal generated by~$\alpha$
has the form $I_1^2 I_2$ with $I_2$ supported on the primes above primes in~$\Sigma$.
Then the isomorphic image of~$\Sel_2 J$ in~$L^\square$, which we will identify
with~$\Sel_2 J$, is given by
\[ \Sel_2 J = \{\xi \in L(\Sigma, 2) : N_{L/\Q}(\xi) = \square,
                                      \forall v \in \Sigma \cup \{\infty\}
                                            \colon \rho_v(\xi) \in \im(\mu_v)\} ,
\]
where $\rho_v \colon L^\square \to L_v^\square$ is the canonical map.
There is also the $2$-Selmer set of~$C$, given by
\[ \Sel_2 C = \{\xi \in L(\Sigma, 2) : N_{L/\Q}(\xi) = \square,
                                       \forall v \colon \rho_v(\xi) \in \mu_v(i_\infty(C(\Q_v)))\} .
\]
It is a subset of the $2$-Selmer group. The set of places~$v$ in the condition
can be restricted to the set $\Sigma \cup \{\infty\}$ together with all `small' primes,
where `small' in practice can be rather large; see~\cite{BruinStoll2009}.

Algorithm~\ref{Algo-general}, combined with the representation of $J(k)/2J(k)$
as a subgroup of~$(L \otimes_{\Q} k)^\square$, then leads to the following.

\begin{algo} \label{Algo} \strut \\[5pt]
  \textbf{Input:} A polynomial $f \in \Z[x]$, squarefree and of odd degree $2g+1$. \\
  \textbf{Output:} The set of rational points on $C \colon y^2 = f(x)$, or {\sf FAIL}.

  \begin{enumerate}[1.]\addtolength{\itemsep}{3pt}
    \item Let $J$ denote the Jacobian of $C$. Set $L = \Q[x]/\langle f \rangle$.
    \item Compute $\Sel_2 J$ and $\Sel_2 C$ as a subgroup and a subset of $L^\square$.
    \item \label{algo:inj}
          Let $L_2 = L \otimes_{\Q} \Q_2$; let $r \colon L^\square \to L_2^\square$
          be the map induced by $\Q \to \Q_2$. \\
          If $\ker r \cap \Sel_2 J \not\subseteq \delta(\pi(J(\Q)[2^\infty]))$,
          then return {\sf FAIL}.
    \item Search for rational points on~$C$ and collect them in a set $C(\Q)_{\text{known}}$.
    \item Let $\calX$ be a partition of~$C(\Q_2)$ into residue disks
          whose image in $L_2^\square$ consists of one element and that are contained
          in half residue disks when $J(\Q)[2] \neq 0$.
    \item Let $R$ denote the image of~$J(\Q)[2^\infty]$ in~$L_2^\square$.
    \item \label{algo:imX}
          For each $X \in \calX$ do the following:
          \begin{enumerate}[a.]\addtolength{\itemsep}{3pt}
            \item If $X \cap C(\Q)_{\text{known}} = \emptyset$: \\
                  \strut\quad If $\mu_2(X) \subseteq \Sel_2 C$, then return {\sf FAIL}; \\
                  \strut\quad otherwise continue with the next~$X$.
            \item Pick some $P_0 \in C(\Q)_{\text{known}} \cap X$.
            \item Compute $Y = \mu_2(q(i_{P_0}(X) + J(\Q)[2^\infty])) \subseteq L_2^\square$.
            \item If $Y \cap r(\Sel_2 J) \not\subseteq R$, then return {\sf FAIL}.
          \end{enumerate}
    \item Return $C(\Q)_{\text{known}}$.
  \end{enumerate}
\end{algo}

That the algorithm is correct is a special case of Proposition~\ref{P:correct},
taking into account that torsion points of odd order are infinitely $2$-divisible,
which allows us to replace $J(\Q)_\tors$ with~$J(\Q)[2^\infty]$ at the places
where the latter occurs.

Remark~\ref{R:modifyX} applies in the same way as to the general algorithm.

\begin{remark} \label{R:no2adic}
  We note that the (image of the) Selmer group in~$L^\square$ that is used
  in the algorithm can be replaced by any subgroup~$S$ of~$L^\square$ that contains
  it (and similarly for the Selmer set). For example, we can take
  \[ S = \{\xi \in L(\Sigma, 2) : N_{L/\Q}(\xi) = \square,
                                  \forall v \in \Sigma \cup \{\infty\} \setminus \{2\} \colon
                                      \res_v(\xi) \in \im(\mu_v)\} ,
  \]
  where $\Sigma$ is the set of `bad primes' for $2$-descent on~$J$.
  This leaves out the $2$-adic Selmer condition. Taking it into account
  requires the computation of~$\mu_2(J(\Q_2))$, which is usually the
  most time-consuming step in the local part of the computation of~$\Sel_2 J$.
  We can do without it, since using~$S$ in the algorithm is actually equivalent
  to using~$\Sel_2 J$. To see this, first consider Step~\ref{algo:inj}.
  Since all elements in the kernel of~$r$ satisfy the $2$-adic Selmer condition
  trivially, it follows that $\ker r \cap S = \ker r \cap \Sel_2 J$, so that
  the outcome of~Step~\ref{algo:inj} is the same in both cases.
  Now consider Step~\ref{algo:imX}a. This does not involve $\Sel_2 J$,
  so its outcome is trivially the same in both cases.
  Finally consider Step~\ref{algo:imX}d. If $Y \cap r(S) \not\subseteq R$,
  then there is some $s \in S$ such that $r(s) \notin R$ and $r(s) \in Y$.
  But everything in~$Y$ is of the form $\mu_2(Q)$ for some $Q \in J(\Q_2)$,
  so $Y \subseteq \im(\mu_2)$, which means that $s$ satisfies the $2$-adic Selmer
  condition. This shows that $s \in \Sel_2 J$ and then implies that
  $Y \cap r(\Sel_2 J) \ni r(s) \notin R$, so that the outcome of this step
  is again the same in both cases. The preceding arguments show that
  the algorithm fails on~$S$ if and only if it fails on~$\Sel_2 J$.
  Finally, it is clear that the result will be the same, namely $C(\Q)_{\text{known}}$,
  in both cases when the algorithm does not output {\sf FAIL}

  If $\Sigma \subseteq \{2, p\}$ with $p \not\equiv \pm 1 \bmod 8$, then we
  can also leave out the condition $N_{L/\Q}(\xi) = \square$, since
  then $\Q(\Sigma, 2)$ injects into $\Q_2^\square$, so the norm condition
  is implied by the image under~$r$ being in~$Y$.

  Of course, we can also use a subset of~$L^\square$ that is possibly larger than~$\Sel_2 C$
  instead of the $2$-Selmer set. In fact, this is what we have to do in practice,
  since the computation of the exact $2$-Selmer set usually requires
  taking into account the local conditions for all primes up to some bound
  that is exponential in the genus of~$C$; compare~\cite{BruinStoll2009}.
\end{remark}

If we assume that $C(\Q)_{\text{known}}$ meets every set in~$\calX$,
then the other conditions
required to avoid failure of the algorithm are likely to be satisfied.
This follows from work of Bhargava and Gross~\cite{BhargavaGross2013},
which we use in a similar way as in~\cite{PoonenStoll2014}: the `probability'
that the map $\Sel_2 J \to J(\Q_2)/2 J(\Q_2)$ is injective is at least
$1 - 2^{1-g-\dim_{\F_2} J(\Q_2)[2]}$, and the `probability' that the image
has intersection with~$Y$ contained in~$R$ is at least $1 - (\#(Y/R) - 1) 2^{1-g}$.
Since by the results of~\cite{PoonenStoll2014} $Y$ is usually small
and by~\cite{Stollpreprint} the size of~$Y$ modulo $R$ is uniformly bounded
by some constant times~$g^2$,
there is a very good chance that both conditions are satisfied when $g$ is large.


\section{Halving points on odd hyperelliptic Jacobians} \label{S:halving}

In this section we describe an algorithm that computes one `half' or all `halves'
of a point $P \in 2 J(k)$, where $J$ is the Jacobian of a hyperelliptic
curve~$C$ of odd degree over the field~$k$. We assume that $\Char(k) \neq 2$,
so that $C$ can be given by an equation $y^2 = f(x)$ with $f \in k[x]$
squarefree and of odd degree~$2g+1$.

Recall that each point in~$J(k)$ is uniquely represented in the form
$[D - d \infty]$, where $D$ is an effective divisor in general position
defined over~$k$
and $d = \deg D \le g$. An effective divisor~$D$ is said to be \emph{in general position}
if its support does not contain~$\infty$ and $D \not\ge P + \iota(P)$ for
any point $P \in C$, where $\iota \colon C \to C$ is the hyperelliptic involution.

Any effective divisor~$D$ in general position can be described by its
\emph{Mumford representation} $(a, b)$. Here $a \in k[x]$ is a monic polynomial
of degree $d = \deg D$ whose roots are the $x$-coordinates of the points
in the support of~$D$, with appropriate multiplicity (so that $a$ corresponds
to the image of~$D$ under the hyperelliptic quotient map to~$\PP^1$),
and $b \in k[x]$ is another polynomial such that $b(\xi) = \eta$ for any
point $P = (\xi, \eta)$ in the support of~$D$ and satisfying $a \mid f - b^2$.
This polynomial~$b$ is uniquely determined modulo~$a$; in particular, we obtain
a unique representation if we require $\deg(b) < d$. However, it is sometimes
useful to allow additional flexibility, so we will not always insist on this
normalization. In fact, we may also want to allow polynomials~$a$ of larger
degree (this leads to even more non-unique representations, but can be useful
in certain situations).

We will use the notation $(a, b)$ to denote the divisor~$D$, and we will
write $[a,b] = [(a,b) - d \infty]$ for the point on~$J$ corresponding to it.

Let $c$ be the leading coefficient of~$f$. Then in terms of the Mumford
representation, the descent map
$\mu \colon J(k) \to L^\square$ is given by
\[ [a, b] \longmapsto (-c)^{\deg(a)} a(\theta) \cdot (L^\times)^2 \]
if $a$ and~$f$ are coprime. In the general case, write $a_1$ and~$f_1$
for $a$ and~$f$ divided by their (monic) gcd; then
\[ \mu([a, b])
    = \tilde{\mu}(a)
    \colonequals
      (-c)^{\deg(a)} \bigl(a(\theta) - a_1(\theta) f_1(\theta)\bigr)
        \cdot (L^\times)^2 ;
\]
compare~\cite{Schaefer1995}.

Since the kernel of~$\mu$ is~$2 J(k)$, this gives us a way of deciding
whether a point $P \in J(k)$ is divisible by~$2$ in~$J(k)$: this is
equivalent to the existence of a polynomial~$s \in k[x]$ such that
\[ s(\theta)^2 = (-c)^{\deg(a)} \bigl(a(\theta) - a_1(\theta) f_1(\theta)\bigr); \]
equivalently,
\[ s^2 \equiv (-c)^{\deg(a)} (a - a_1 f_1) \bmod f . \]

We will now state a result that shows how to compute a point~$Q \in J(k)$
such that $2Q = P$, given such a polynomial~$s$.

Note that when $a = a_1^2 a_2$, then $P = [a, b]$ is divisible by~$2$ if and only
if $P_2 = [a_2, b]$ is, and each point $Q$ such that $2Q = P$ has the form
$Q = Q_1 + Q_2$ where $Q_2$ satisfies $2Q_2 = P_2$ and $Q_1 = [a_1, b]$.
So we can assume that $a$ is squarefree.

\begin{proposition} \label{P:halve}
  Let $a \in k[x]$ be monic and squarefree, of degree $\le 2g+1$.
  Let $d$ denote $\gcd(a, f)$, so that $a = d a_1$ and $f = d f_1$ as above.
  Suppose we have $b, s \in k[x]$ with
  \[ f \equiv b^2 \bmod a \qquad\text{and}\qquad
     (-c)^{\deg(a)} (a - a_1 f_1) \equiv s^2 \bmod f ,
  \]
  so that $[a,b] \in 2 J(k)$.
  For polynomials $u$, $v$ and~$w$, consider the following system of congruences:
  \begin{equation} \label{E:congruences}
    v d \equiv w s \bmod f_1, \quad
    v d \equiv u b \bmod a_1, \quad
    u f_1 \equiv w s \bmod d.
  \end{equation}
  Then this system has a nontrivial solution $(u, v, w)$ with $w$ monic such that
  \begin{equation} \label{E:restrictions}
    \deg(u) < \deg(a)/2,\quad \deg(v) \le g + \deg(a)/2 - \deg(d)
    \quad\text{and}\quad \deg(w) \le g.
  \end{equation}
  Each such solution satisfies the relation
  \begin{equation} \label{E:rel}
    u^2 f_1 = d v^2 - (-c)^{\deg(a)} a_1 w^2.
  \end{equation}
  Now assume that $(u, v, w)$ is a solution such that $w$ has minimal degree.
  Let $d_1 = \gcd(u, w)$; then $d_1$
  divides $f a_1$ and~$v$.
  Write $d_1 = d_f d_a$ with $d_f = \gcd(d_1, f)$ and $d_a = \gcd(d_1, a_1)$.
  Set $w_1 = w/d_1$, $u_1 = u/d_1$, $v_1 = v/d_1$
  and let $r \in k[x]$ be such that
  \[ r u_1 \equiv -v_1 d \bmod w_1 d_a \qquad\text{and}\qquad r \equiv 0 \bmod d_f . \]
  Then $Q = [w, r]$ satisfies $P = 2Q$.

  If $Q$ and~$Q'$ are computed starting from $s$ and~$s'$ such that
  $s' \not\equiv \pm s \bmod f$, then $Q$ and~$Q'$ are distinct.
\end{proposition}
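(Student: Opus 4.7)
My plan has four stages.

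\textbf{Existence of $(u,v,w)$ and the identity~\eqref{E:rel}.} Since $a$ and $f$ are both squarefree with $d = \gcd(a,f)$, the three polynomials $d$, $a_1 = a/d$, $f_1 = f/d$ are pairwise coprime. The system~\eqref{E:congruences} imposes $\deg d + \deg a_1 + \deg f_1 = 2g+1 + \deg a_1$ linear conditions on the $k$-vector space of triples satisfying~\eqref{E:restrictions}, whose $k$-dimension equals $2g+2 + \deg a_1$; hence a nontrivial solution exists, and $w$ can be normalized to be monic (a solution with $w=0$ forces $u=v=0$ by coprimality). To derive~\eqref{E:rel}, set
\[ F \colonequals u^2 f_1 - d v^2 + (-c)^{\deg a} a_1 w^2. \]
Squaring each of the three congruences in~\eqref{E:congruences} and substituting $b^2 \equiv f \pmod a$ and $s^2 \equiv (-c)^{\deg a}(a - a_1 f_1) \pmod f$ shows $F \equiv 0$ modulo each of $d$, $a_1$, $f_1$, and hence modulo $fa_1$. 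The degree bounds in~\eqref{E:restrictions} force $\deg F < \deg(fa_1)$, so $F = 0$.

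\textbf{The factor $d_1$ and the polynomial $r$.} From~\eqref{E:rel}, any prime divisor of $d_1 = \gcd(u,w)$ must divide $dv^2$; minimality of $\deg w$ forbids any such prime from being coprime to $fa_1$, since otherwise cancelling it from $u$, $v$, $w$ would produce a smaller-degree solution. Hence $d_1 \mid fa_1$ and $d_1 \mid v$; the splitting $d_1 = d_f d_a$ is well defined since $\gcd(f, a_1) = 1$. The two defining congruences for $r$ are then tailored so that $r^2 \equiv f \pmod w$: modulo $w_1 d_a$ this follows from $ru_1 \equiv -v_1 d$ combined with the reduced form $u_1^2 f_1 = d v_1^2 - (-c)^{\deg a} a_1 w_1^2$ of~\eqref{E:rel} (obtained by dividing by $d_1^2$); modulo $d_f$ it follows from $r \equiv 0$ and $d_f \mid f$; CRT (using coprimality of $w_1 d_a$ and $d_f$) assembles these into $r^2 \equiv f \pmod w$, so $[w,r]$ is a genuine element of $J(k)$.

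\textbf{The halving $2[w,r]=[a,b]$.} This is the main obstacle. Multiplying~\eqref{E:rel} by $d$ gives the curve-theoretic identity
\[ (dv - uy)(dv + uy) = d^2 v^2 - u^2 f = (-c)^{\deg a} a w^2, \]
so the rational function $g \colonequals dv - uy$, regular on $C \setminus \{\infty\}$, satisfies $g \cdot (\iota^* g) = (-c)^{\deg a} a w^2$ on $C$, where $\iota$ is the hyperelliptic involution. A local divisor analysis then shows that $\operatorname{div}(g) = D + 2 D_Q - (\deg a + 2\deg w)\infty$, where $D$ and $D_Q$ are the effective divisors with Mumford representations $(a,b)$ and $(w,r)$: at non-Weierstrass points of $D$ (above roots of $a_1$) the congruence $vd \equiv ub \pmod{a_1}$ forces $g$ to vanish on $D$ while $\iota^* g$ does not; at the Weierstrass points above roots of $d$ both $g$ and $\iota^* g$ vanish simply; and at the points of $D_Q$ the defining congruences for $r$ ensure $g$ vanishes to order exactly~$2$. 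The resulting principal divisor witnesses $2[w,r] = [a,b]$ in $J(k)$.

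\textbf{Distinct halves.} If $s' \not\equiv \pm s \pmod f$, then at least one of $s' - s$, $s' + s$ represents a nontrivial class in $L^\square$, and via the Cassels-map identification of $J(k)[2]$ inside $L^\square$ this is precisely the class of $Q' - Q \in J(k)[2]$; so $Q \neq Q'$.
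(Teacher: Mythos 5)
Your overall strategy coincides with the paper's: the same dimension count for existence, the same verification of \eqref{E:rel} modulo $d$, $a_1$ and $f_1$ followed by a degree bound, and the same key function $u y - v d$ (the paper's $\phi$, up to sign) whose divisor is meant to exhibit $(a,b) + 2(w,-r)$ as principal. But the divisor analysis, which is the heart of the proof, is not correct as you sketch it. At a root $\xi$ of $\gcd(u,a_1)$ --- equivalently of $d_a$, since \eqref{E:rel} forces such a $\xi$ to be a root of $v$ and $w$ as well --- both points $(\xi, b(\xi))$ and $(\xi,-b(\xi))$ are zeros of $g = dv - uy$, so your claim that $\iota^* g$ does not vanish at the non-Weierstrass points of $D$ fails there; the zeros above roots of $d_f$ likewise need separate treatment. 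The paper handles this by writing $\phi = d_f d_a\,(u_1 y - v_1 d)$, computing the divisor of each factor separately (using the norm $u_1^2 f - v_1^2 d^2 = (-c)^{\deg a} d a_1 w_1^2$ together with $u_1 b \equiv v_1 d \bmod a_1/d_a$ and $u_1 b \equiv -v_1 d \bmod d_a$), and only then assembling $(a,b) + 2(w,-r)$. Without this bookkeeping the multiplicities do not come out right. A smaller gap in the same part: your argument only shows that every irreducible factor of $d_1$ divides $f a_1$; to conclude $d_1 \mid f a_1$ you also need $d_1$ squarefree, which the paper proves by a separate minimality argument.

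The final distinctness step is also wrong as stated. If $s' \not\equiv \pm s \bmod f$, then $\epsilon = s'/s$ is a square root of $1$ in $L$ other than $\pm 1$, so $s'-s = s(\epsilon-1)$ and $s'+s = s(\epsilon+1)$ are both zero divisors in $L$; neither represents a class in $L^\square$, and no reason is given why such a class would be $\mu(Q'-Q)$. The paper argues in the reverse direction: given $Q = [w,r]$ with $2Q = P$, the function with divisor $(a,b) + 2(w,-r) - n\infty$ is unique up to scaling, which recovers $u$ and $v$ up to a common sign and then $s \bmod f$ up to sign via explicit congruences; hence distinct classes of $s$ modulo $\pm 1$ yield distinct points $Q$. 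You would need to supply an argument of this kind (or prove equivariance of $s \mapsto Q_s$ under the $J(k)[2]$-action) to close this step.
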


\begin{proof}
  First note that, since $f$ is squarefree, we have that $d$ and~$f_1$ are
  coprime. Also, $d$ and~$a_1$ are coprime, since a divisor in general position
  contains no ramification point with multiplicity~$2$ or more. So $f_1$, $a_1$
  and~$d$ are coprime in pairs and squarefree. The fact that $a$ divides~$f - b^2$ implies
  that $d$ divides~$b$ and that $d$ is also the gcd of $a$ and~$b$.

  The first claim is that the system of congruences has a nontrivial solution
  when the degrees of the polynomials are bounded as stated. To see this, note
  that the conditions are linear in (the coefficients of) $u$, $v$ and~$w$, and
  that the total number of coefficients of $u$, $v$ and~$w$ is
  \begin{align*}
    \lceil \deg(a)/2 \rceil + (g + \lfloor \deg(a)/2 \rfloor - \deg(d) + 1) + (g + 1)
      &= 2g + \deg(a) - \deg(d) + 2 \\
      &= \deg(f_1) + \deg(a_1) + \deg(d) + 1 .
  \end{align*}
  On the other hand, the number of linear constraints is $\deg(f_1) + \deg(a_1) + \deg(d)$.
  So there are more variables than constraints, hence nontrivial solutions exist.

  We claim that $w$ cannot be zero in such a solution. Otherwise, the first
  congruence would imply that $f_1$ divides~$v$ (since $f_1$, $a_1$ and~$d$
  are coprime in pairs),
  which for degree reasons (recall that $\deg(a) \le 2g+1$) is only possible when
  $v = 0$. In a similar way, the second congruence would then imply that
  $a_1$ divides~$u$ (since $a_1$ is coprime to~$b$), whereas the third congruence
  implies that $d$ divides~$u$, so $a$ divides~$u$, which is only possible when $u = 0$.
  But then our solution is trivial, a contradiction. So $w \neq 0$, and without
  loss of generality, $w$ can be taken to be monic.

  We show that every solution as above satisfies relation~\eqref{E:rel}.
  Namely, by the first congruence and since $s^2 \equiv (-c)^{\deg(a)} a \bmod f_1$,
  \[ d^2 v^2 = (d v)^2 \equiv (s w)^2 = s^2 w^2
             \equiv (-c)^{\deg(a)} a w^2 = (-c)^{\deg(a)} d a_1 w^2 \bmod f_1,
  \]
  so (since $d$ and~$f_1$ are coprime), the relation holds mod~$f_1$.
  Next, by the second congruence,
  \[ d^2 v^2 = (d v)^2 \equiv (b u)^2 = b^2 u^2 \equiv f u^2 = d f_1 u^2 \bmod a_1, \]
  so (since $d$ and~$a_1$ are coprime), the relation holds mod~$a_1$.
  Finally, by the last congruence,
  \[ u^2 f_1^2 = (u f_1)^2 \equiv (s w)^2 = s^2 w^2 \equiv -(-c)^{\deg(a)} a_1 f_1 w^2
      \bmod d,
  \]
  so (since $d$ and~$f_1$ are coprime again), the relation holds also mod~$d$.
  It follows that it holds mod $f_1 a_1 d$. Since the degrees of all terms
  are strictly less than the degree of $f_1 a_1 d$, equality follows, and
  \eqref{E:rel} is verified.

  We note that the fact shown above that a nontrivial solution has $w \neq 0$
  implies that $w$ determines the solution uniquely. It follows that there is
  in fact a \emph{unique} solution with $w$~monic and $\deg(w)$~minimal.

  Since $d$ is squarefree, \eqref{E:rel}
  implies that the gcd~$d_1$ of $w$ and~$u$ also divides~$v$. We can therefore
  divide all three by this gcd, obtaining $u_1$, $v_1$ and~$w_1$; they satisfy
  \[ u_1^2 f_1 = d v_1^2 - (-c)^{\deg(a)} a_1 w_1^2. \]
  If some irreducible factor~$p$ of~$d_1$ does not divide~$f a_1$, then
  $(u/p, v/p, w/p)$ also satisfy the system of congruences, contradicting
  the minimality of~$\deg(w)$. Now assume that $p^2$ divides~$d_1$ for some
  irreducible polynomial~$p$. Then $p$ divides $f_1$, $a_1$ or~$d$, say $p \mid a_1$
  (the other cases are analogous). Since $a_1$ is squarefree, the congruence
  $v d \equiv u b \bmod a_1$ implies $(v/p) d \equiv (u/p) b \bmod a_1$,
  and so again $(u/p, v/p, w/p)$ satisfy the system of congruences, contradiction.
  So $d_1$ is squarefree and must therefore divide~$a_1 f_1 d$.
  In particular, we can write $d_1 = d_f d_a$ as claimed.

  Note that $(u_1 b)^2 \equiv u_1^2 f \equiv (v_1 d)^2 \bmod a_1$, so that
  $a_1$ divides $(u_1 b - v_1 d)(u_1 b + v_1 d)$. We claim that
  $d_a = \gcd(u_1 b + v_1 d, a_1)$. For this, consider an irreducible
  factor~$p$ of~$d_a$. If $p$ divides $u_1 b - v_1 d$, then $(u/p, v/p, w/p)$
  are a solution, a contradiction. So $p$ must divide $u_1 b + v_1 d$.
  Conversely, if $p$ is any irreducible factor of~$a_1$ that divides $u_1 b + v_1 d$,
  then (noticing that $b$ is invertible mod~$a_1$) for $p$ to divide $u b - v d$,
  it must necessarily divide $u$ and~$v$, so $p \mid d_a$.

  $u_1$ is invertible mod~$w_1$, but also mod~$d_a$ (since $u_1$ and~$v_1$
  are coprime as well --- $a_1$ is squarefree --- and $d_a$ is coprime with $f_1$ and~$d$).
  Furthermore, $d_f$ is coprime with $w_1$ (and of course also with~$d_a$),
  for essentially the same reason. Therefore
  a polynomial~$r$ exists such that $u_1 r \equiv -v_1 d \bmod w_1 d_a$
  and $r \equiv 0 \bmod d_f$.

  Now we consider the function
  \[ \phi = u(x) y - v(x) d(x) = d_f(x) d_a(x) \left(u_1(x) y - v_1(x) d(x)\right) \]
  on~$C$. Its divisor of zeros is
  \begin{align*}
    2 (d_f, 0) &+ \bigl((d_a, b) + (d_a, -b)\bigr)
                + \bigl((d, 0) + (d_a, -b) + (a_1/d_a, b) + 2(w_1, -r)\bigr) \\
      &= (a_1, b) + (d, 0) + 2\bigl((d_f, 0) + (d_a, -b) + (w_1, -r)\bigr) \\
      &= (a, b) + 2(w, -r) .
  \end{align*}
  To see this, note that the norm in~$k[x]$ of the last factor of~$\phi$
  is $u_1^2 f - v_1^2 d^2 = (-c)^{\deg(a)} d a_1 w_1^2$ and that
  $u_1 b \equiv v_1 d \bmod a_1/d_a$ and $u_1 b \equiv -v_1 d \bmod d_a$
  (and so also $r \equiv b \bmod d_a$). Setting $Q = [w, r]$, we therefore
  obtain $2 Q = P$.

  We now show that $Q$ determines $s \bmod f$ up to sign. Given $Q = [w, r]$
  such that $2 Q = P$,
  there is a unique function (up to scaling) on~$C$ whose divisor
  is $(a, b) + 2(w, -r) - n \infty$ (where $n = \deg(a) + 2 \deg(w)$);
  this function must then be~$\phi$, which gives us $u$ and~$v$ up to scaling;
  the relation $u^2 f_1 = d v^2 - (-c)^{\deg(a)} a w^2$ then fixes them up to
  a common sign. Write $d_f = d_{f_1} d_d$ with $d_{f_1} = \gcd(d_f, f_1)$
  and $d_d = \gcd(d_f, d)$. In a similar way as above for~$d_a$, one shows
  that $d_{f_1} = \gcd(w_1 s + v_1 d, f_1)$ and $d_d = \gcd(u_1 f_1 + w_1 s, d)$.
  Since $w_1$ is coprime with~$f$, this determines $s \bmod f$ via the congruences
  \begin{gather*}
    w_1 s \equiv v_1 d \bmod f_1/d_{f_1} , \quad
    w_1 s \equiv -v_1 d \bmod d_{f_1} , \\
    w_1 s \equiv u_1 f_1 \bmod d/d_d , \quad
    w_1 s \equiv -u_1 f_1 \bmod d_d .
  \end{gather*}
  A common sign change of $u$ and~$v$ (which is the only ambiguity here)
  results in a sign change of~$s$.
\end{proof}

We can try to use the algorithm implied by Proposition~\ref{P:halve}
over a $p$-adic field.
It will possibly run into precision problems when some of the roots of~$a$
get close to roots of~$f$ (but with the resultant of $a$ and~$f$ still
being nonzero, albeit $p$-adically small) or when the resulting point
is represented by a divisor of lower degree or such that some points
are close to the point at infinity. In practice, however, these problems
occur fairly rarely. A possible remedy in such a case is to replace
$(a, b)$ by another representation $(a', b')$ such that $[a', b'] = [a, b]$
and $\deg(a) > g$. Writing $f - b^2 = a c$, we have
$[c - 2 h b - h^2 a, -b - h a] = [a, b]$ for all polynomials~$h$.
Taking $h$ to be constant already allows us to replace~$a$ by a polynomial~$a'$
that is coprime with~$f$ (and probably we can also arrange $a'$ to be squarefree)
and satisfies $\deg(a') \le g+1$ if $\deg(a) = g$.
Another possibility is to consider points in a residue disk given by
suitable Laurent series, perform the computation on the Laurent series
and then specialize.

\begin{remark}
  In the context of computing $q(P)$, the following observation can be useful.
  Given $P = [a, b]$ with $\deg(a) \le g+1$ and $T = [h, 0] \in J(k)[2]$
  with $h \mid f$ and $\deg(h) \le g$, we can use the method described in
  Proposition~\ref{P:halve} to compute halves of $P + T$ without first
  computing a representation of the sum. For simplicity assume $\gcd(a, f) = 1$
  (this can be arranged, see above). Then $P + T = [ah, b'h]$ where $b'h \equiv b \bmod a$.
  There will be $s_1$ and~$s_2$ such that $s_1^2 \equiv (-c)^{\deg(a)} a h \bmod f/h$
  and $s_2^2 \equiv -(-c)^{\deg(a)} a (f/h) \bmod h$. We obtain the congruences
  \[ v h \equiv w s_1 \bmod f/h, \quad v h \equiv u b \bmod a, \quad
     u (f/h) \equiv w s_2 \bmod h
  \]
  with $\deg(u) < (\deg(a) + \deg(h))/2$, $\deg(v) \le g + (\deg(a) - \deg(h))/2$
  and $\deg(w) \le g$.

  In a similar way, we can divide $P + P'$ by~$2$: let $P = [a,b]$, $P' = [a',b']$
  and assume that $\deg(a) + \deg(a') \le 2g+1$ and that $a$, $a'$ and~$f$ are coprime
  in pairs. Given a polynomial~$s$ such that $s^2 \equiv (-c)^{\deg(a)+\deg(a')} a a' \bmod f$,
  the system to be solved is
  \[ v \equiv w s \bmod f, \quad v \equiv u b \bmod a, \quad v \equiv u b' \bmod a' \]
  with $\deg(u) < (\deg(a) + \deg(a'))/2$, $\deg(v) \le g + (\deg(a) + \deg(a'))/2$
  and $\deg(w) \le g$.
\end{remark}

We mention one implication that can be helpful in applications.

\begin{corollary} \label{C:piofhalf}
  Let $[a', b]$ be the Mumford representation of a point $P \in J(k)$,
  write $a' = a_0^2 a$ with $a$ squarefree and monic and fix a polynomial~$s$
  such that $s^2 \equiv (-c)^{\deg(a)}(a - a_1 f_1) \bmod f$ as above.
  Let $(u, v, w)$ be the
  solution with $w$ monic and of smallest degree of the system~\eqref{E:congruences}
  with the restrictions in~\eqref{E:restrictions}, and let $Q \in J(k)$
  be the associated point such that $2Q = P$. Then
  $\mu(Q) = \tilde{\mu}(a_0) \tilde{\mu}(w)$.
\end{corollary}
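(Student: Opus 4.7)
The plan is to reduce the computation of $\mu(Q)$ to the homomorphism property of $\mu$ combined with the explicit formula $\mu([a,b]) = \tilde{\mu}(a)$ recalled at the start of Section~\ref{S:halving}. The key observation is that $\mu$ sees only the first coordinate of a Mumford representation, so once I correctly decompose $Q$ into pieces whose $a$-polynomials are $a_0$ and $w$, I am essentially done.

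First I would decompose $P$ in $J(k)$. Writing $a' = a_0^2 a$ with $a$ squarefree, the divisibilities $a_0 \mid f-b^2$ and $a \mid f-b^2$ both follow from $a_0^2 a \mid f - b^2$, so the pairs $(a_0, b \bmod a_0)$ and $(a, b \bmod a)$ are legitimate Mumford representations of effective divisors $D_1$ and $D_2$ in general position. The effective divisor underlying $P$ then equals $2 D_1 + D_2$ as divisors on $C$, and passing to classes gives
\[ P = 2 P_1 + P_2, \qquad P_1 = [a_0, b \bmod a_0], \quad P_2 = [a, b \bmod a]. \]
This is exactly the reduction-to-the-squarefree-case noted in the first paragraph of the proof of Proposition~\ref{P:halve}.

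Next I would apply Proposition~\ref{P:halve} to $P_2$, whose divisibility by $2$ is equivalent to that of $P$: the given $s$ certifies $P_2 \in 2 J(k)$, and the minimal-degree solution $(u,v,w)$ of~\eqref{E:congruences} subject to~\eqref{E:restrictions} yields $Q_2 = [w,r]$ with $2 Q_2 = P_2$. The point $Q$ referred to in the corollary, which is required to satisfy $2Q = P$ rather than $2Q = P_2$, is therefore $Q = P_1 + Q_2$, since
\[ 2 Q = 2 P_1 + 2 Q_2 = 2 P_1 + P_2 = P. \]
Applying the group homomorphism $\mu \colon J(k) \to L^\square$ and using $\mu([h, \cdot]) = \tilde{\mu}(h)$ for any Mumford pair with first coordinate $h$, I get
\[ \mu(Q) = \mu(P_1)\,\mu(Q_2) = \tilde{\mu}(a_0)\,\tilde{\mu}(w), \]
which is the desired equality.

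The argument is essentially bookkeeping once one fixes the correct interpretation of \emph{the associated point $Q$}: it must be $P_1 + Q_2$, not $Q_2$ alone, because Proposition~\ref{P:halve} only halves the squarefree part. The only point requiring a little care is checking that the splitting $P = 2 P_1 + P_2$ really does hold as classes (not merely as divisors), which reduces to verifying that the Mumford pair $(a_0^2, b \bmod a_0^2)$ represents $2 D_1$; this in turn is immediate because $a_0 \mid f - b^2$ implies $a_0^2 \mid f - b^2$ as well. No serious obstacle is expected beyond this.
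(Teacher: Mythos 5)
Your argument is correct and is exactly the paper's (much terser) proof: the paper simply invokes the remark preceding Proposition~\ref{P:halve} to write $Q = [a_0,b] + [w,r]$ and applies the homomorphism $\mu$ together with $\mu([h,\cdot]) = \tilde{\mu}(h)$. One tiny slip in your last paragraph: $a_0 \mid f-b^2$ does not in general imply $a_0^2 \mid f-b^2$; the divisibility $a_0^2 \mid f-b^2$ that you need is instead immediate from $a_0^2 a = a' \mid f-b^2$, which is part of the hypothesis that $(a',b)$ is a Mumford representation of~$P$.
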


\begin{proof}
  This is because according to Proposition~\ref{P:halve}, $Q = [a_0, b] + [w, r]$
  for some~$r \in k[x]$.
\end{proof}

\begin{corollary} \label{C:halve12}
  In the situation of Corollary~\ref{C:piofhalf}, we have the following special cases.
  \begin{enumerate}[\upshape(1)]
    \item If $P = [(\xi,\eta) - \infty] \in 2J(k)$ with $\eta \neq 0$, fix $s \in k[x]$
          such that $s^2 \equiv c (\xi - x) \bmod f$. Let $w$ be the monic polynomial
          of smallest degree such that the residue of smallest degree of~$w s$ modulo~$f$
          has degree~$\le g$. Then the point $Q \in J(k)$ with $2Q = P$ that is
          associated to~$s$ satisfies
          \[ \mu(Q) = \tilde{\mu}(w). \]
    \item If
          $P = [(\xi_1, \eta_1) - (\xi_2, \eta_2)] \in 2J(k)$
          with $\xi_1 \neq \xi_2$ and $\eta_j \neq 0$ for $j \in \{1,2\}$, fix
          $s \in k[x]$ such that $s^2 \equiv (x - \xi_1)(x - \xi_2) \bmod f$.
          Let $w$ be the monic polynomial of smallest degree such that the residue~$v$
          of smallest degree of~$w s$ modulo~$f$ has degree~$\le g+1$ and satisfies
          $\eta_2 v(\xi_1) + \eta_1 v(\xi_2) = 0$. Then the point $Q \in J(k)$
          with $2Q = P$ that is associated to~$s$ satisfies
          \[ \mu(Q) = \tilde{\mu}(w). \]
  \end{enumerate}
\end{corollary}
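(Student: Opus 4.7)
The plan is to deduce both statements as direct specializations of Corollary~\ref{C:piofhalf} by writing down the system~\eqref{E:congruences} and the bounds~\eqref{E:restrictions} explicitly in each case. In~(1), $P = [(\xi,\eta)-\infty]$ has Mumford representation $(x-\xi, \eta)$, so $a = x-\xi$ is squarefree; since $f(\xi) \ne 0$, $a$ is coprime to~$f$, giving $a_0 = 1$, $d = 1$, $a_1 = a$, $f_1 = f$. In~(2), $P = [(\xi_1,\eta_1) - (\xi_2,\eta_2)]$ is represented by the effective divisor $(\xi_1,\eta_1)+(\xi_2,-\eta_2)$, whose Mumford form has $a = (x-\xi_1)(x-\xi_2)$ and $b$ the degree-$\le 1$ interpolant with $b(\xi_1) = \eta_1$, $b(\xi_2) = -\eta_2$; the hypotheses $\xi_1 \ne \xi_2$ and $f(\xi_j) \ne 0$ again yield $a$ squarefree and coprime to~$f$, so once more $a_0 = 1$, $d = 1$, $a_1 = a$, $f_1 = f$. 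In both cases the third congruence of~\eqref{E:congruences} is vacuous, the first reads $v \equiv ws \bmod f$, and the second reads $v \equiv ub \bmod a$; since $a_0 = 1$, Corollary~\ref{C:piofhalf} will yield $\mu(Q) = \tilde{\mu}(w)$.

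For case~(1), the hypothesis on~$s$ in Proposition~\ref{P:halve} reduces to $s^2 \equiv -c(x-\xi) \equiv c(\xi-x) \bmod f$, matching the corollary. The bound $\deg(u) < 1/2$ forces $u$ to be a scalar, and $v \equiv ub \bmod (x-\xi)$ collapses to the single equation $v(\xi) = u\eta$; since $f(\xi) \ne 0$ gives $\eta \ne 0$, this simply determines~$u$ from~$v$ and imposes no further condition on~$w$. The remaining constraints are that $v$ be the residue of $ws$ modulo~$f$ and $\deg(v) \le g$, so minimizing the degree of the monic~$w$ subject to these gives exactly the polynomial described in the corollary.

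For case~(2), a minor subtlety is that the corollary normalizes $s$ by $s^2 \equiv (x-\xi_1)(x-\xi_2) \bmod f$ whereas Proposition~\ref{P:halve} uses $(-c)^2 a = c^2 a$; these differ only by a factor of~$c$ in~$s$, which rescales the residue $v = ws \bmod f$ by a constant and therefore preserves the degree bound and the linear vanishing condition on~$v$ at $\xi_1, \xi_2$. The bound $\deg(u) < 1$ again forces $u$ to be a scalar, and $v \equiv ub \bmod a$ is equivalent (since $a$ has simple roots) to the pair $v(\xi_1) = u\eta_1$, $v(\xi_2) = -u\eta_2$. Eliminating~$u$, using $\eta_1, \eta_2 \ne 0$, gives precisely $\eta_2 v(\xi_1) + \eta_1 v(\xi_2) = 0$. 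The bound $\deg(v) \le g+1$ from~\eqref{E:restrictions} together with this vanishing condition and the requirement that $v$ be the residue of $ws$ mod~$f$ then identifies the minimal monic~$w$ as in the corollary.

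No step looks like a serious obstacle: the whole argument is a matter of translating the generic system~\eqref{E:congruences} into the specific low-degree situation. The only point requiring care is the scalar~$u$ bookkeeping (and using $\eta_j \ne 0$ to solve for~$u$), which is where the hypothesis $f(\xi_j) \ne 0$ is essential.
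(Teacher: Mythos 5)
Your proposal is correct and follows essentially the same route as the paper: specialize Proposition~\ref{P:halve} and Corollary~\ref{C:piofhalf} with $d = 1$ and $u$ constant, note that the congruence $v \equiv ub \bmod a$ is vacuous in case~(1) and equivalent to $\eta_2 v(\xi_1) + \eta_1 v(\xi_2) = 0$ in case~(2), and read off $\mu(Q) = \tilde{\mu}(w)$. Your additional remarks (the explicit Mumford form $(\xi_1,\eta_1) + (\xi_2,-\eta_2) - 2\infty$ in case~(2), and the harmless rescaling of~$s$ by~$c$) are correct details that the paper leaves implicit.
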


\begin{proof}
  This follows directly from Corollary~\ref{C:piofhalf}, using that $d = 1$
  (in the notation of Proposition~\ref{P:halve}) in both cases and that $u$ has to
  be constant. In the first case, the congruence $v \equiv u b \bmod a$ is redundant,
  and the system reduces to just $v \equiv w s \bmod f$. In the second case,
  the congruence $v \equiv u b \bmod a$ is equivalent to the condition
  $\eta_2 v(\xi_1) + \eta_1 v(\xi_2) = 0$.
\end{proof}


\section{A concrete example} \label{S:Ex}

In this section we use the approach described above to show the following
result.

\begin{theorem} \label{T:ex}
  Assuming GRH, the only integral solutions of the equation
  \[ y^2 - y = x^{21} - x \]
  have $x \in \{-1, 0, 1\}$.
\end{theorem}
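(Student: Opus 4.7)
The plan is to put the equation into standard odd-degree hyperelliptic form and apply Algorithm~\ref{Algo} at $p=2$. The substitution $Y = 2y - 1$ turns $y^2 - y = x^{21} - x$ into
\[ Y^2 = f(x) \colonequals 4 x^{21} - 4x + 1\,, \]
a nice hyperelliptic curve~$C$ of odd degree $2g+1 = 21$ and genus $g = 10$ with a rational Weierstrass point at infinity; integer solutions $(x,y)$ correspond exactly to rational points $(x,Y)$ with $x,Y \in \Z$ (and $Y$ necessarily odd, since $f(x)$ is odd for integer~$x$). The set~$C(\Q)_{\text{known}}$ consists of $\infty$ together with $(0,\pm 1)$, $(1,\pm 1)$, $(-1,\pm 1)$, and I use $P_0 = \infty$ as basepoint for the embedding $i \colon C \to J$. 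In the equivalent Artin--Schreier model $y^2 + y = x^{21} + x$ the partial derivative in~$y$ is identically~$1$ over~$\F_2$, so $C$ has good reduction at~$2$; this keeps the number of $2$-adic residue disks under control and makes Corollary~\ref{C:hypdisk} applicable at the Weierstrass disk above~$\infty$.

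The first substantial task is to compute $\Sel_2 J$ (or the slightly larger subset of $L^\square$ permitted by Remark~\ref{R:no2adic}), where $L = \Q[x]/\langle f\rangle$. Assuming $f$ is irreducible, $L$ is a number field of degree~$21$, and the global part of the $2$-descent requires class group and unit group information for~$L$; this is the step where GRH is essential, since a rigorous class group computation in such a large-degree field is infeasible without it. The set of bad primes~$\Sigma$ is essentially the set of primes dividing the discriminant of~$f$, and the local conditions at the primes in $\Sigma \setminus \{2\}$ and at infinity are routine. With $\Sel_2 J$ in hand, Step~\ref{algo:inj} asks that $\ker r \cap \Sel_2 J$ be contained in the image of $J(\Q)[2^\infty]$; for genus $g = 10$ the Bhargava--Gross heuristic recalled after Algorithm~\ref{Algo} makes this highly likely to hold, but it is a concrete check.

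Next I would partition $C(\Q_2)$ into residue disks~$X$ on which the image in $L_2^\square$ is constant. For each such~$X$, Step~\ref{algo:imX}a either discards it (its image misses the $2$-Selmer set) or selects a known rational point $P_1 \in X$. I would then compute
\[ Y_X = \mu_2\bigl(q(i_{P_1}(X) + J(\Q)[2^\infty])\bigr) \subset L_2^\square \]
by iterating the halving algorithm of Section~\ref{S:halving}, using the streamlined formulas of Corollary~\ref{C:halve12} when the difference $P - P_1$ has a degree-one Mumford representation, and terminating the recursion via the local-constancy estimates of Corollary~\ref{C:qconstC} (or Corollary~\ref{C:hypdisk}(2) at the disk above the Weierstrass point~$\infty$, where good reduction and $J(\F_2)[2]=0$ allow the stronger bound $m=1$). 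The curve-specific verification is the final inclusion $Y_X \cap r(\Sel_2 J) \subset R$ of Step~\ref{algo:imX}d.

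The main obstacle is the size of the global computation: carrying out a rigorous $2$-descent over a number field of degree~$21$ is at the edge of what is currently practical, which is precisely why GRH is needed. A secondary concern is that the algorithm might return {\sf FAIL} on some residue disk; the probabilistic estimate recalled after Algorithm~\ref{Algo} gives a failure rate of order $2^{1-g}=2^{-9}$ per disk, so success is expected for this $C$, and if a disk resists the basic algorithm the refinements of Remark~\ref{R:modifyX} (sub-partitioning, or enlarging the group used in Proposition~\ref{P:key-general} by a difference of two known rational points) provide the needed flexibility. Once the algorithm returns $C(\Q)_{\text{known}}$, every rational point on~$C$ has $x \in \{-1,0,1\}$ or is~$\infty$, and in particular every integer solution of $y^2 - y = x^{21} - x$ has $x \in \{-1,0,1\}$, proving Theorem~\ref{T:ex}.
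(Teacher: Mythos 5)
Your overall framework is the right one (pass to $Y^2=f(x)$ with $f(x)=4x^{21}-4x+1$ and run the $2$-descent-plus-$q$ machinery of Algorithm~\ref{Algo}), but the proposal has a genuine gap: it attempts to determine all of $C(\Q)$ by running the algorithm over every residue disk of $C(\Q_2)$, and that cannot work for this curve. The curve has the additional rational points with $x=1/4$ (indeed $f(1/4)=4^{-20}$, giving $(1/4,\pm 2^{-20})$ on $Y^2=f(x)$), which lie in the residue disk at infinity and are missing from your list $C(\Q)_{\text{known}}$. Worse, these points are of the form $\varphi(2u)$ with $u\in\Z_2^\times$ for a parameterization $\varphi$ of the disk at infinity, and by Corollary~\ref{C:hypdisk} the value of $q$ on that disk is determined by its values at exactly such points; since $[(1/4,\ast)-\infty]$ has nontrivial image in $J(\Q_2)/2J(\Q_2)$ lying in the image of the Selmer group, the set $Y_X$ for the infinity disk meets $r(\Sel_2 J)$ outside $R$ no matter how small a sub-disk around $\infty$ one takes. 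So Step~\ref{algo:imX}d returns {\sf FAIL} there, and (as the paper notes) even the refinement of Remark~\ref{R:modifyX} with $\Gamma$ generated by $[(1/4,\ast)-\infty]$ does not rescue it. This is precisely why the theorem is stated for \emph{integral} solutions only; your proposal treats that restriction as an afterthought rather than as forced by the method.

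The correct route is to restrict from the outset to the points with $2$-adically integral $x$-coordinate. Concretely: after computing a supergroup $S\supset\Sel_2 J$ (here the discriminant of $f$ is $-2^{40}$ times six distinct odd primes, so $2$ is the only bad prime and one can take $S=L(\emptyset,2)$, computed under GRH) and checking $S\injects L_2^\square$, one uses the fact that for $x(P)\in\Z_2$ the image of $P$ in $L_2^\square$ depends only on $x\bmod 4$ to eliminate the disks with $x\equiv 2\bmod 4$ (their image misses $\im(S)$), and then applies Proposition~\ref{P:key} with the halving computation only to the three half-disks centered at the Weierstrass-free points $(-1,0)$, $(0,0)$, $(1,0)$ of the original model (where Corollary~\ref{C:hypdisk} reduces the work to the single parameter values $x(P_0)\pm 4$). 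The disk at infinity is never touched. One further small inaccuracy: your appeal to the $2^{1-g}$ failure heuristic to argue that ``success is expected'' on every disk is exactly what breaks down at infinity, where an actual extra rational point forces failure regardless of the heuristic.
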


We remark that $l = 21$ is the smallest odd exponent such that our method
can be successfully applied to determine the set of integral points on
the curve $y^2 - y = x^l - x$. One can check that for $l \in \{5, 7, 9, 11, 13, 17\}$
the $2$-Selmer rank of the Jacobian is $\ge g = (l-1)/2$, and for
$l \in \{15, 19\}$, the map from $\Sel_2 J$ to $J(\Q_2)/2J(\Q_2)$
is not injective.

We also note that all these curves have a pair of rational points
with $x = 1/4$; these points are of the form $\varphi(2u)$ for a parameterization~$\varphi$
of the residue disk at infinity, where $u \in \Z_2^\times$. For such a point~$P$,
$[P - \infty]$ has nontrivial image in~$J(\Q_2)/2J(\Q_2)$, and this image
is contained in the image of the Selmer group. On the other hand, by
Corollary~\ref{C:hypdisk}, the value of~$q$ on the residue disk of~$\infty$
is given by the values at points of the form~$\varphi(2u)$, so $q(i_\infty(D))$
will meet the image of the Selmer group non-trivially for every disk~$D$ around
infinity, no matter how small. This implies that our approach cannot be used
to show that $\infty$ is the only rational point $2$-adically close to~$\infty$.
This is why we restrict to integral points in the statement of Theorem~\ref{T:ex}.
The result is in fact stronger: it covers all rational solutions whose $x$-coordinate
has odd denominator.

In principle, one could try to deal with the residue disk at infinity using
$\Gamma = \langle \gamma \rangle$ where
$\gamma = [(\frac{1}{4}, \frac{1}{2} + \frac{1}{2^{21}}) - \infty]$, since the three
(known) rational points in the disk map into this group. Unfortunately,
it turns out that $q(i_\infty(P_4))$ meets the image of the Selmer group
outside the image of~$\Gamma$, so that we cannot conclude. Here $P_4 = \varphi(4)$ denotes
a point with $x$-coordinate~$1/4^2$ (we can use a parameterization of the disk
at infinity whose $x$-coordinate is given by $t^{-2}$):
$i_\infty(P_4) + 6\gamma = 2^3 Q$
with $\pi_2(Q) \in \sigma(\Sel_2 J) \setminus \pi_2(\Gamma)$.

\begin{proof}
  Let $C$ denote the curve defined by the equation $y^2 - y = x^{21} - x$,
  and let $J$ be its Jacobian. Note that $C$ is isomorphic to the curve given by
  $y^2 = 4x^{21} - 4x + 1 =: f(x)$; let $L = \Q[x]/\langle f \rangle$.
  We compute a group~$S \subset L^{\square}$ containing~$\Sel_2 J$ using the algorithm
  described in~\cite{Stoll2001}.
  The discriminant of~$f$ is $-2^{40}$~times the product of six distinct odd
  primes. This implies that $2$ is the only `bad' prime for $2$-descent,
  so that the image of the Selmer group is contained in~$L(\{2\}, 2)$.
  Since $L$ is totally ramified at~$2$, we can reduce this to $S = L(\emptyset, 2)$
  (if $\xi$ represents an element of $L(\{2\}, 2)$ and $N_{L/\Q}(\xi)$
  is a square, then the ideal generated by~$\xi$ must be a square).
  The class group of~$L$ turns out to be trivial, so that
  $L(\emptyset, 2) = \calO_L^\square$, but we do not need this fact.
  We do need to compute $L(\emptyset, 2)$ and explicit generators of it, though.
  This is where we use GRH to make the computation feasible in reasonable time.
  We check that the map $S \to L_2^\square$ is injective.

  The curve has good reduction mod~$2$, and $J(\F_2)$ and $J(\Q_2)$ both
  have no elements of order~$2$. Up to the action of the hyperelliptic involution,
  there are two residue disks with $2$-adically integral $x$-coordinates;
  we can center them at the rational points $(0, 0)$ and~$(1, 0)$, respectively.
  By~\cite{Stoll2001}*{Lemma~6.3}, it follows that the image in~$L_2^\square$
  of a point $P \in C(\Q_2)$ with $x(P) \in \Z_2$
  depends only on $x \bmod 4$. We check that the image in~$L_2^\square$
  of the points with $x(P) \equiv 2 \bmod 4$ is not in the image of~$S$.
  This shows that any ($2$-adically) integral point $P \in C(\Q)$
  must have $x(P) \equiv -1, 0$ or~$1 \bmod 4$. We consider each of the corresponding
  (pairs of) half residue disks separately. Let $P_0$ be one of the points
  $(-1, 0)$, $(0, 0)$ or~$(1, 0)$ on~$C$ and let $D$ be the disk around~$P_0$
  consisting of points~$P$ with $x(P) \equiv x(P_0) \bmod 4$ and $y(P) \equiv 0 \bmod 2$.
  By Corollary~\ref{C:hypdisk} (note that the disk~$D$ corresponds to $m \ge 2$
  in terms of the maximal residue disk around~$P_0$), we have
  \[ q(i_{P_0}(D)) = q(i_{P_0}(\varphi(4 \Z_2^\times))) , \]
  where $\varphi$ is a parameterization of the residue disk containing~$P_0$
  such that $\varphi(0) = P_0$ and $D = \varphi(4 \Z_2)$.
  By Lemma~\ref{L:qconstJ} and since $\nu(i_{P_0}(\varphi(4 u))) = 1$
  for some $u \in \Z_2^\times$ (as becomes apparent in the course of the computation),
  it is sufficient to consider $\varphi(4)$ and~$\varphi(-4)$.
  So we compute the (unique) half of $i_{P_0}(P)$ for each point~$P \in D$
  such that $x(P) = x(P_0) \pm 4$;
  we find that its image in~$L_2^\square$ is nontrivial (and does not depend on the sign)
  and is not contained in the image of~$S$. By Theorem~\ref{T:key}
  this now implies that $D \cap C(\Q) = \{P_0\}$, for each of the three points.
  So we obtain the result that
  \[ C(\Q) \cap C(\Z_2) = \{(-1, 0), (-1, 1), (0, 0), (0, 1), (1, 0), (1, 1)\} \]
  as claimed.
\end{proof}


\section{An application to Fermat's Last Theorem} \label{S:FLT}

In this section we apply the criterion that is given by the algorithm
in Section~\ref{S:ratpts} to a certain family of hyperelliptic curves
that are related to Fermat curves. This leads to a criterion for Fermat's
Last Theorem to hold for a given prime~$p$. Of course, FLT has been proved
in general by Wiles~\cite{Wiles1995,TaylorWiles}, so this will not produce a new result.
On the other hand, it shows that the method does work in practice. In the next section,
we will deal with a similar family of curves that are related to certain
generalized Fermat equations; our method applies again and does indeed solve
some new cases of generalized Fermat equations.

Consider
\[ C_l \colon y^2 = f(x) \colonequals 4x^l + 1 \]
with $l = 2g + 1$.
This curve has good reduction at~$2$, since it is isomorphic to $y^2 + y = x^l$.
The reduction has three $\F_2$-points, so there are three residue classes
in~$C_l(\Q_2)$. We also note that $C_l$ has the three obvious rational points
$\infty$, $(0, 1)$ and~$(0, -1)$ and that $[(0, \pm 1) - \infty] \in J_l(\Q)$,
where $J_l$ denotes the Jacobian of~$C_l$, is a point of odd order~$l$.
We note that $J_l(\Q_2)$ and~$J_l(\F_2)$ contain no points of order~$2$.

\begin{corollary} \label{C:imdFLT}
  Let $\varphi \colon D_0 \to D \subseteq C_l(\Q_2)$ be a parameterization of one
  of the three residue disks of~$C_l(\Q_2)$, with $\varphi(0)$ being $\infty$ or~$(0, \pm 1)$.
  Then
  \[ q(i_\infty(D)) = \begin{cases}
                 q(i_\infty(\varphi(2\Z_2^\times \cup 4\Z_2^\times))) \cup \{0\}
                   & \text{if $\varphi(0) = (0, \pm1)$;} \\
                 q(i_\infty(\varphi(2\Z_2^\times))) \cup \{0\}
                   & \text{if $\varphi(0) = \infty$.}
               \end{cases}
  \]
\end{corollary}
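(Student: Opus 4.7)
The plan is to reduce this corollary to Corollary~\ref{C:hypdisk}. First I would verify that its hypotheses are met for $C_l$: good reduction at~$2$ was recorded in the paragraph defining $C_l$ (via the model $y^2 + y = x^l$), and both $J_l(\Q_2)[2] = 0$ and $J_l(\F_2)[2] = 0$ were noted there as well.

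For the disk at~$\infty$, the point is a Weierstrass point (since $\deg f = 2g+1$ is odd), and I can take a uniformizer for which $\varphi(-t) = \iota(\varphi(t))$ (for instance, use $x = 1/t^2$). Part~(2) of Corollary~\ref{C:hypdisk} then gives the claim immediately.

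For a disk centered at $P_0 = (0, \pm 1)$, the point is not a Weierstrass point, so only part~(1) of Corollary~\ref{C:hypdisk} applies, and it is stated in terms of~$i_{P_0}$ rather than~$i_\infty$ used in our formulation. The two embeddings differ by translation by the odd-order torsion point $T \colonequals i_\infty(P_0) = [(0,\pm 1) - \infty]$, of order~$l$. The bridge I would use is the identity $q(S + T) = q(S)$ for every subset $S \subset J_l(\Q_2)$ whenever $T$ is a torsion point of odd order. To see this, note that $T$ lies in the odd torsion~$J_l(\Q_2)[l]$, on which multiplication by~$2$ is a bijection, so for each $n \ge 0$ there is $T' \in J_l(\Q_2)[l]$ with $2^n T' = T$ and $\pi_2(T') = 0$; hence $2^n Q = P$ implies $2^n(Q + T') = P + T$ with $\pi_2(Q + T') = \pi_2(Q)$, which gives both inclusions. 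Applying this translation identity to $S = i_{P_0}(D)$ and to $S = i_{P_0}(\varphi(2\Z_2^\times \cup 4\Z_2^\times))$, and combining with part~(1) of Corollary~\ref{C:hypdisk}, yields the desired equality.

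The only non-routine step is the translation identity $q(S+T) = q(S)$ for odd-order~$T$; everything else is bookkeeping that follows directly from the already-established Corollary~\ref{C:hypdisk} and from standard properties of the point at infinity on an odd hyperelliptic curve.
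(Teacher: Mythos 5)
Your proposal is correct and follows the same route as the paper, whose proof is simply the one-line reduction to Corollary~\ref{C:hypdisk}. The translation identity $q(S+T)=q(S)$ for odd-order torsion~$T$, which you prove to bridge $i_{P_0}$ and $i_\infty$ for the disks at $(0,\pm 1)$, is exactly the detail the paper leaves implicit, and your argument for it (lifting $T$ through multiplication by $2^n$ within the odd torsion, where $\pi_2$ vanishes) is sound.
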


\begin{proof}
  This is simply Corollary~\ref{C:hypdisk} specialized to the case at hand.
\end{proof}

We now want to find $q(i_\infty(C_l(\Q_2)))$ in terms of its image in $L_2^\square$
as in Algorithm~\ref{Algo}. To do this, we need a basis for the latter group.
We first note that $f$ is irreducible over~$\Q_2$, so $L_2$ is a field.
Let $\lambda = 2^{1/l}$, then $L_2 = \Q_2(\lambda)$ is totally and tamely ramified
and $\theta = -\lambda^{-2}$ is a root of~$f$.
Clearly, $2 = \lambda^l$. Note that an element of the
form $1 + 4 \alpha \lambda = 1 + \alpha \lambda^{2l+1}$ with $\alpha \in \calO_{L_2}$
is always a square in~$L_2$ (the power series for $\sqrt{1+x}$ converges when the valuation
of~$x$ exceeds that of~$4$). Furthermore,
\[ (1 + \lambda^n)^2
     = 1 + \lambda^{2n} + \lambda^{n+l}
     = (1 + \lambda^{2n})(1 + \lambda^{n+l} + \ldots) ,
\]
which allows us to eliminate factors of the form $1 + \lambda^{2n}$ for $n \le l-1$
when working modulo squares. In this way, we find that the following
elements represent an $\F_2$-basis for $L_2^\square$:
\[ \lambda,\; 1+\lambda,\; 1+\lambda^3,\; \ldots,\; 1+\lambda^{2n+1},\; \ldots,\;
   1+\lambda^{2l-3},\; 1+\lambda^{2l-1},\; 1+\lambda^{2l} .
\]

\begin{lemma} \label{L:FLT_Z}
  The image of $q(i_\infty(C_l(\Q_2)))$ in $L_2^\square$ consists of the classes of
  \[ 1, \quad 1 + \lambda^{l+2},\quad 1 + \lambda^{2l-1},\quad
     \prod_{k\ge 1} (1 + \lambda^{l+2^k}) .
  \]
\end{lemma}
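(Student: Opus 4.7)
The plan is to partition $C_l(\Q_2)$ into its three 2-adic residue disks (corresponding to the $\F_2$-points $\infty$, $(0,0)$, $(0,1)$ of the good-reduction model $y^2 + y = x^l$) and to apply Corollary~\ref{C:imdFLT} in each disk to reduce to a finite computation. Throughout I would use the formula $\mu([(x-\xi,\eta)-\infty]) = -4(\theta-\xi) = 4\xi + \lambda^{2l-2}$, exploiting $\theta = -\lambda^{-2}$ and $4 = \lambda^{2l}$.

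For the infinity disk, I would parameterize by $x = t^{-2}$; evaluation at $t = 2u$, $u\in\Z_2^\times$, yields $\mu \equiv 1 + \lambda^{2l-2}$ modulo squares, and the identity $(1+\lambda^{l-1})^2 = 1 + \lambda^{2l-2} + \lambda^{2l-1}$ reduces this to $1+\lambda^{2l-1}$ in $L_2^\square$. Since this is a nontrivial basis element, no halving is possible and the disk contributes $\{1,\, 1+\lambda^{2l-1}\}$.

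For the disks at $(0,\pm 1)$, Corollary~\ref{C:imdFLT}(1) requires handling $t \in 2\Z_2^\times$ and $t \in 4\Z_2^\times$ separately. At $t = 2u$ one finds $\mu \equiv 1+\lambda^{l+2}$ (again a nontrivial basis element, no halving). At $t = 4u$, $\mu \equiv 1$, so the unique half $Q_1$ exists; via Corollary~\ref{C:halve12}(1), its $\mu$-image is controlled by $s = \lambda^{l-1}\sqrt{1+u\lambda^{2l+2}}$. The key observation is that in the binomial expansion $\sum_k \binom{1/2}{k}(u\lambda^{2l+2})^k$ the $k$-th term has $\lambda$-valuation $l\cdot s_2(k) + 2k$ (where $s_2$ is the binary digit-sum), so only $k = 2^j$, $j\ge 0$, contribute terms of $\lambda$-valuation less than $2l$, namely at valuations $l + 2^{j+1}$. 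After solving the congruences of Proposition~\ref{P:halve} for the minimal monic $w$ and identifying $\tilde{\mu}(w) = (-4)^{\deg w} w(\theta)$ in the basis of $L_2^\square$, I expect this valuation pattern to assemble into $\mu(Q_1) \equiv \prod_{k\ge 1}(1+\lambda^{l+2^k})$, where the product is finite modulo squares because the factors with $l+2^k > 2l$ are squares by Hensel. Since this class is nontrivial, the halving chain terminates. The $(0,-1)$ disk contributes the same classes by the invariance of $\mu$ under the hyperelliptic involution (which only changes $b$, to which $\mu$ is insensitive), so the total image is the four stated classes.

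The hard part will be the explicit identification of $\mu(Q_1)$ with the stated product: one must propagate the $\lambda$-adic structure of $\sqrt{1+u\lambda^{2l+2}}$ through the linear-algebraic procedure of Proposition~\ref{P:halve}, convert between the $\lambda$- and $\theta$-bases of $L_2$ (via $\lambda^2 = -\theta^{-1}$ and $\theta^l = -1/4$), and apply Hensel-type bounds to collapse unit coefficients to $1$ modulo squares. This is lengthy but essentially mechanical.
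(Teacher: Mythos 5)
Your overall route is the paper's: split $C_l(\Q_2)$ into the three residue disks, invoke Corollary~\ref{C:imdFLT} to reduce to $t\in 2\Z_2^\times$ (resp.\ $t\in 2\Z_2^\times\cup 4\Z_2^\times$), and your two ``easy'' computations ($1+\lambda^{2l-2}\sim 1+\lambda^{2l-1}$ at infinity via $(1+\lambda^{l-1})^2$, and $1+\lambda^{l+2}$ at $x=2u$) agree with the paper, as does the reduction of the $(0,-1)$ disk by the involution. The issue is the step you defer as ``lengthy but essentially mechanical'': it is in fact the only real content of the lemma, and your substitute for it does not prove what is needed. By Corollary~\ref{C:halve12}(1), $\mu(Q_1)=\tilde{\mu}(w)$ where $w$ is the \emph{minimal-degree monic} polynomial with $ws \bmod f$ of degree $\le g$; this is a function of $w$, not of $s$. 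Your ``key observation'' computes the $\lambda$-adic expansion of $s(\theta)=\lambda^{l-1}\sqrt{1+u\lambda^{2l+2}}$ itself, which is a different element of $L_2^\times$ (it even carries an odd power of $\lambda$), and then asserts that the same valuation pattern will ``assemble into'' $\tilde{\mu}(w)$. To prove the lemma for all $l$ simultaneously you need a \emph{uniform closed form for $w$}, and producing one is not a routine consequence of the congruence system in Proposition~\ref{P:halve}.

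The paper's proof supplies exactly this missing piece: it guesses $\tilde{w}(x)=x^g w_0(1/x)$ with $w_0$ the degree-$g$ truncation of $(1-4ux)^{-1/2}$ (the \emph{reciprocal} series of the unit part of $s$), verifies $\tilde{w}(x)s(x)\equiv 2x^{2g+1}+(\text{terms of degree}\le g)\bmod 8\Z_2[x]$ so that $w\equiv\tilde{w}\bmod 8\Z_2[x]$, and only then evaluates $\tilde{\mu}(w)\sim(-\theta)^g\bigl(1+\sum_n\binom{2n}{n}u^n\lambda^{2n}\bigr)\sim\prod_{k\ge1}(1+\lambda^{l+2^k})$ using $v_2\binom{2n}{n}=s_2(n)$. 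That your expansion of $\sqrt{1+u\lambda^{2l+2}}$ exhibits the same exponents $l+2^{j+1}$ is a genuine coincidence of the two series (the $n$-th coefficients of $(1-4x)^{1/2}$ and of $(1-4x)^{-1/2}$ both have $2$-adic valuation $s_2(n)$), but you would still have to identify $w$ explicitly to turn that into a proof. A secondary, minor point: your cutoff ``valuation less than $2l$'' should be ``at most $2l$'' (since $1+\lambda^{2l}$ is a nontrivial class and only valuation $\ge 2l+1$ guarantees a square); you are saved only because $l+2^{j+1}=2l$ cannot occur for odd $l$, which deserves a remark.
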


We let $Z$ denote the set consisting of the three nontrivial classes in this image.

\begin{proof}
  We first consider the residue disk~$D_\infty$ around~$\infty$. By Corollary~\ref{C:imdFLT},
  it is sufficient to find $\mu_2(q(i_\infty(\varphi(t))))$
  for $t = 2u$ with $u \in \Z_2^\times$. One choice of~$\varphi$ is
  \[ \varphi(t) = \bigl(t^{-2},
              2 t^{-l}(1 + 2^{-3} t^{2l} - 2^{-7} t^{4l} \pm \ldots)\bigr) .
  \]
  Then $\mu_2(i_\infty(\varphi(2u)))$ is the class of $(2u)^{-2} + \lambda^{-2}$ in~$L_2^\square$.
  We have
  \[ (2u)^{-2} + \lambda^{-2}
       = (2u)^{-2}(1 + u^2 \lambda^{2l-2})
       \sim 1 + \lambda^{2l-2}
       \sim 1 + \lambda^{2l-1} ,
  \]
  where $\sim$ denotes equivalence mod squares, by the relation
  \[ 1 \sim (1 + \lambda^{l-1})^2 = 1 + \lambda^{2l-2} + \lambda^{2l-1}
       \sim (1 + \lambda^{2l-2})(1 + \lambda^{2l-1}) .
  \]
  We conclude that (specifying elements of $L_2^\square$ using representatives in~$L_2^\times$)
  \[ \mu_2(q(i_\infty(D_\infty))) = \{1, 1 + \lambda^{2l-1}\} . \]
  (Compare~\cite{PoonenStoll2014}*{Lemma~10.2}, which says that
  the image of the residue disk at infinity under the $\rholog$ map
  has just one element.)

  Now we consider the residue disk~$D_{(0,1)}$ around~$(0,1)$.
  If $P = (\xi,\eta) \in C_l(\Q_2)$ has integral $x$-coordinate, then we must
  have $\xi \in 2 \Z_2$ (otherwise the right hand side is $5 \bmod 8$ and therefore
  not a square). We can parameterize~$D_{(0,1)}$ by
  \[ \varphi(t) = \left(t, \sqrt{1 + 4 t^l} = 1 + 2 t^l - 2 t^{2l} + \ldots\right) . \]
  Then $\mu_2(i_\infty(\varphi(2u)))$ is the class of
  \[ 2 u + \lambda^{-2} = \lambda^{-2}(1 + \lambda^{l+2} u) \sim 1 + \lambda^{l+2} ; \]
  the latter relation holds when $u$ is a unit. By Corollary~\ref{C:imdFLT},
  we also need to find the image under~$q$ of points given by $t \in 4 \Z_2^\times$,
  so $t = 4u$ with $u \in \Z_2^\times$. In this case (recall that $\theta = -\lambda^{-2}$)
  \[ 4u - \theta
      = (2 \theta^{g+1})^2 (1 - 4 u/\theta)
      = s(\theta)^2
  \]
  where $s \in \Q_2[x]$ is a polynomial of degree $\le l-1$ such that
  \begin{align*}
    s(\theta) &= 2 \theta^{g+1} \sqrt{1 - 4 u/\theta} \\
              &= 2 (\theta^{g+1} - 2 u \theta^{g} - 2 u^2 \theta^{g-1}
                                  - 4 u^3 \theta^{g-2} - \ldots) .
  \end{align*}
  The coefficients of
  \[ \sqrt{1 - 4 x}
        = \sum_{n=0}^\infty 2^{2n} (-1)^n \binom{1/2}{n} x^n
        = 1 - \sum_{n=1}^\infty 2^n \frac{1 \cdot 3 \cdot 5 \cdots (2n-3)}{n!} x^n
  \]
  (except for the constant term)
  all have $2$-adic valuation at least~$1$ (since $v_2(n!) \le n-1$)
  and $\theta^{-1} = - 4 \theta^{2g}$, so
  \[ \frac{1}{2} s(x)
        \equiv x^{g+1} - 2 u x^{g} - 2 u^2 x^{g-1} - \ldots - c_{g+1} u^{g+1}
          \bmod 8 \Z_2[x] ,
  \]
  where $c_{g+1}$ denotes the coefficient of~$x^{g+1}$ in $-\sqrt{1-4x}$.
  Let $w_0(x)$ denote the partial sum of the power series of $(1 - 4 u x)^{-1/2}$
  up to and including the term with~$x^g$, and set
  \[ \tilde{w}(x) = x^g w_0(1/x) = x^g + 2 u x^{g-1} + 6 u^2 x^{g-2} - \ldots . \]
  Then
  \[ \tilde{w}(x) s(x) \equiv 2 x^{2g+1} + (\text{terms up to $x^g$}) \bmod 8 \Z_2[x] , \]
  so $\tilde{w}(x) \equiv w(x) \bmod 8 \Z_2[x]$,
  where $w(x)$ is the monic polynomial of degree~$g$
  such that $w(\theta) s(\theta) \in \Q_2 + \Q_2 \theta + \ldots + \Q_2 \theta^g$.
  Let $Q \in J_l(\Q_2)$ denote the point such that $2 Q = [(4u, *) - \infty]$.
  By Corollary~\ref{C:halve12}, the image of~$Q$ in $L_2^\square$
  is given by the class of
  \begin{align*}
    (-1)^g w(\theta)
      &\sim (-1)^g \tilde{w}(\theta) \\
      &= (-\theta)^g (1 + 2 u \lambda^2 + 6 u^2 \lambda^4 + 20 u^3 \lambda^6
                        + 70 u^4 \lambda^8 + \ldots) \\
      &\sim 1 + 2 \lambda^2 + 6 \lambda^4 + 20 \lambda^6 + 70 \lambda^8 + \ldots \\
      &\sim 1 + \sum_{k=1}^\infty \lambda^{l+2^k} \\
      &\sim (1 + \lambda^{l+2}) (1 + \lambda^{l+4}) (1 + \lambda^{l+8})
              \cdots (1 + \lambda^{l+2^k}) \cdots ,
  \end{align*}
  where the product can be truncated as soon as $2^k > l$. (We have used
  that the valuation of the coefficient of~$x^n$ in~$(1 - 4x)^{-1/2}$ is~$1$
  precisely when $n$ is a power of~$2$.)
\end{proof}

We can generalize this result to certain curves of the form $y^2 = 4 x^l + A$.
Let $A \in \Z$ with $A \equiv 1 \bmod 8$ and consider
\[ C_{l,A} \colon y^2 = 4 x^l +  A . \]
Then $C_{l,A}$ is $\Q_2$-isomorphic to~$C_l = C_{l,1}$, since $A$ is a square and an $l$th~power
in~$\Q_2$. In particular, we still have $L_2 = \Q_2(\lambda)$,
where now $L = \Q[x]/\langle 4 x^l + A \rangle$, and
the image of $q(i_\infty(C_{l,A}(\Q_2)))$ in~$L_2^\square$ is the same
as for~$C_l$, namely $Z \cup \{1\}$.

\begin{proposition} \label{P:hyp1}
  Let $A \in \Z$ satisfy $A \equiv 1 \bmod 8$;
  consider the curve $C_{l,A} \colon y^2 = 4 x^l + A$
  over~$\Q$ with $l = 2g + 1 \ge 5$, with Jacobian~$J_{l,A}$.
  Let $L = \Q[x]/\langle 4 x^l + A \rangle$
  and $L_2 = L \otimes_{\Q} \Q_2 = \Q_2(\lambda)$ with $\lambda = 2^{1/l}$. If
  \begin{enumerate}[\upshape(1)]
    \item the canonical map
          $\Sel_2 J_{l,A} \injects L^\square \to L_2^\square$ is injective and
    \item its image does not meet~$Z$,
  \end{enumerate}
  then $C_{l,A}(\Q) = \{\infty\}$ if $A$ is not a square, and
  $C_{l,A}(\Q) = \{\infty, (0,a), (0,-a)\}$ if $A = a^2$.
\end{proposition}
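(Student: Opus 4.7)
The plan is to apply Proposition~\ref{P:key-general} with embedding $i = i_\infty$ and trivial subgroup $\Gamma = 0$ to each of the three residue disks of $C_{l,A}(\Q_2)$, and then to pin down the rational points using the torsion-in-residue-polydisk argument from the proof of Proposition~\ref{P:key}.

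First I would transport the computation of Lemma~\ref{L:FLT_Z} from $C_l$ to $C_{l,A}$. Since $A \equiv 1 \bmod 8$ is both a square and an $l$th power in $\Z_2^\times$, choosing $a, b \in \Z_2^\times$ with $a^2 = b^l = A$ gives a $\Q_2$-isomorphism $C_{l,A} \to C_l$ via $(x, y) \mapsto (x/b, y/a)$, sending $\infty \mapsto \infty$ and $(0, \pm a) \mapsto (0, \pm 1)$. It identifies the completed \'etale algebras as $L \otimes_\Q \Q_2 = L_2 = \Q_2(\lambda)$ and intertwines the Cassels map and the operator~$q$, so the image of $q(i_\infty(C_{l,A}(\Q_2)))$ in $L_2^\square$ equals $Z \cup \{1\}$. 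The isomorphism also transports $J_l(\Q_2)[2] = 0$ to $J_{l,A}(\Q_2)[2] = 0$, whence $J_{l,A}(\Q)[2^\infty] = 0$ and $J_{l,A}$ has good reduction at~$2$.

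Now fix a residue disk $X \subset C_{l,A}(\Q_2)$ and apply Proposition~\ref{P:key-general} with $K = \Q$, $A = J_{l,A}$, $i = i_\infty$, $p = 2$, and $\Gamma = 0$. Condition~\eqref{key-gen-1} reduces to $\ker \sigma = 0$, which is hypothesis~(1). Condition~\eqref{key-gen-2} reduces to $q(i_\infty(X)) \cap \im(\sigma) \subset \{0\}$; using the injectivity of $\mu_2 \colon J_{l,A}(\Q_2)/2J_{l,A}(\Q_2) \injects L_2^\square$ from Section~\ref{S:ratpts}, this is equivalent to the image of $q(i_\infty(X))$ in $L_2^\square$ intersecting the image of $\Sel_2 J_{l,A}$ only in $\{1\}$. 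The former lies in $Z \cup \{1\}$ by the previous paragraph, and hypothesis~(2) asserts that the latter image avoids~$Z$, so the condition holds.

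Thus Proposition~\ref{P:key-general} yields $i_\infty(X \cap C_{l,A}(\Q)) \subset J_{l,A}(\Q)_\tors$ for each of the three disks. Since $J_{l,A}(\Q_2)[2] = 0$, the kernel of reduction $K_1 \subset J_{l,A}(\Q_2)$ is a torsion-free pro-$2$ group, so each residue polydisk of $J_{l,A}(\Q_2)$ contains at most one torsion point. The disk around~$\infty$ maps into the polydisk~$K_1$, whose unique torsion point is $0 = i_\infty(\infty)$, giving $X \cap C_{l,A}(\Q) \subset \{\infty\}$. The two disks around $(0, \pm a)$ map into polydisks containing $i_\infty((0, \pm a))$, which correspond via the $\Q_2$-isomorphism to $\pm[(0,1)-\infty] \in J_l(\Q_2)$ and thus have odd order~$l$; each is therefore the unique torsion point in its polydisk, so $X \cap C_{l,A}(\Q) \subset \{(0, \pm a)\}$, which lies in $C_{l,A}(\Q)$ exactly when $a \in \Q$, i.e., when $A$ is a square. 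Assembling the three cases gives the conclusion. The proof is thus a direct transport of Lemma~\ref{L:FLT_Z} followed by unwinding Proposition~\ref{P:key-general}; the main obstacle is moved entirely into hypotheses~(1) and~(2), whose verification in concrete families is where the real work sits.
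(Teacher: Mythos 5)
Your proposal is correct and follows essentially the same route as the paper: transport the computation of Lemma~\ref{L:FLT_Z} to $C_{l,A}$ via the $\Q_2$-isomorphism with $C_l$, apply Proposition~\ref{P:key-general} with $\Gamma = 0$ (the two hypotheses translating exactly into conditions (1) and (2)), and then use that each residue disk contains a unique torsion point of $J_{l,A}(\Q_2)$ to pin down the rational points. The only cosmetic differences are that you work disk by disk rather than with $X = C_{l,A}(\Q_2)$ all at once, and your endgame invokes torsion-freeness of the kernel of reduction directly where the paper routes through the injection of $C_{l,A}(\Q)$ into $C_{l,A}(\F_2)$; both are equivalent here.
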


\begin{proof}
  We apply Theorem~\ref{T:key-general} with $A = J_{l,A}$, $i = i_\infty$,
  $\Gamma = \{0\}$ and $X = C_{l,A}(\Q_2)$.
  By Lemma~\ref{L:FLT_Z}, $Z$ is the set of nontrivial images in~$L_2^\square$
  of elements in $q(i_\infty(C_{l,A}(\Q_2)))$.
  So the assumptions here match the assumptions of Theorem~\ref{T:key-general},
  and we conclude that $i_\infty(C_{l,A}(\Q)) \subseteq \overline{\{0\}} = J_{l,A}(\Q)_{\tors}$.
  Since $C_{l,A}$ has good reduction at~$2$ and $J_{l,A}(\Q)[2]$ is trivial,
  we find that $J_{l,A}(\Q)_{\tors}$ injects into~$J_{l,A}(\F_2)$;
  in particular, $C_{l,A}(\Q)$ will inject into $C_{l,A}(\F_2)$, which has three elements.
  Since each residue class in~$C_{l,A}(\Q_2)$ contains exactly one torsion point
  (namely, $\infty$, $(0, a)$ and $(0, -a)$, respectively, where $a$ is
  a square root of~$A$ in~$\Q_2$), the claim follows.
\end{proof}

It is known that Fermat's Last Theorem holds for a prime~$p \ge 3$
if (and only if) the curve $y^2 = 4 x^p + 1$ has only the obvious three rational points.
So Proposition~\ref{P:hyp1} gives a criterion for FLT for exponent~$p$ to hold, in terms
of the $2$-Selmer group of the Jacobian of this curve.
We can deduce the following criterion.

\begin{proposition} \label{P:FLT1}
  Let $p \ge 5$ be a prime and set $L = \Q(2^{1/p})$ and $L_2 = \Q_2(2^{1/p})$.
  Let $r \colon \calO_L^\square \to \calO_{L_2}^\square$ denote the canonical map.
  If
  \begin{enumerate}[\upshape(1)]
    \item $p^2 \nmid 2^{p-1} - 1$,
    \item the class number of~$L$ is odd, and
    \item $\im(r) \cap Z = \emptyset$ (where $Z$ is as above),
  \end{enumerate}
  then Fermat's Last Theorem holds for the exponent~$p$.
\end{proposition}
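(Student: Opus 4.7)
The plan is to apply Proposition~\ref{P:hyp1} to the curve $C_p \colonequals C_{p,1}$, exploiting the classical fact, recalled immediately before this proposition, that FLT for exponent~$p$ is equivalent to the statement $C_p(\Q) = \{\infty, (0, 1), (0, -1)\}$. The two conditions that need to be verified are injectivity of $\Sel_2 J_p \to L_2^\square$ and disjointness of its image from the three-element set~$Z$.

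First I would bound $\Sel_2 J_p$ by unit classes. The discriminant of $f(x) = 4x^p + 1$ equals $\pm 2^{2p-2} p^p$ and $C_p$ has good reduction at~$2$, so $\Sel_2 J_p$ sits inside $L(\Sigma, 2)$ with $\Sigma \subseteq \{\mathfrak{p}_2, \mathfrak{p}_p\}$, where $\mathfrak{p}_2 = (\lambda)$ is the unique prime above~$2$ (with $\lambda = 2^{1/p}$) and $\mathfrak{p}_p$ denotes any prime of $\calO_L$ above~$p$. Hypothesis~(1) is exactly Dedekind's criterion applied to the minimal polynomial $x^p - 2$ of~$\lambda$: since $(x-2)^p \equiv x^p - 2 \pmod{p}$ while $f(2)/p = 2(2^{p-1}-1)/p \not\equiv 0 \pmod{p}$, we conclude that $\Z[\lambda]$ is $p$-maximal and $p \calO_L = \mathfrak{p}_p^p$ is totally ramified with a single prime above~$p$. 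Using hypothesis~(2) (odd class number, so the squaring map on~$\mathrm{Cl}(L)$ is an isomorphism), every $\xi \in L(\Sigma, 2)$ can be represented in the form $u \cdot \alpha^2 \cdot \lambda^{e_1} \gamma^{e_2}$ with $u \in \calO_L^\times$, $e_i \in \{0, 1\}$, and $\gamma \in \calO_L$ any chosen generator of an ideal $\mathfrak{a}^2 \mathfrak{p}_p$. Since $N_{L/\Q}(\lambda) = 2$ and $N_{L/\Q}(\gamma) = \pm p \cdot N(\mathfrak{a})^2$, the Selmer norm condition $N_{L/\Q}(\xi) \in (\Q^\times)^2$ forces $e_1 = e_2 = 0$, yielding $\Sel_2 J_p \subset \calO_L^\square$.

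Consequently, the composition $\Sel_2 J_p \to L_2^\square$ factors through~$r$, with image in~$\im(r)$; hypothesis~(3) then ensures this image misses~$Z$, which handles the second condition of Proposition~\ref{P:hyp1}. For the injectivity condition, Remark~\ref{R:no2adic} shows that elements of~$\ker r$ trivially satisfy the $2$-adic Selmer condition, so $\ker(\Sel_2 J_p \to L_2^\square) = \Sel_2 J_p \cap \ker r$. The main obstacle is to show this intersection is trivial: a priori, a unit in the Selmer subgroup might become a square in~$L_2$ without being a square in~$L$. I expect this to be handled by combining the finite-place Selmer conditions at~$p$ (where the total ramification from hypothesis~(1) is again used, eliminating the local unit contribution at~$\mathfrak{p}_p$) and at~$\infty$ with a dimension count — recalling that $\dim_{\F_2} \calO_L^\square = (p+1)/2$ whereas $\dim_{\F_2} \calO_{L_2}^\square = p+1$ — but if the structural argument fails, the injectivity can still be checked computationally on a generating set of fundamental units of~$\calO_L$. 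Once injectivity is established, Proposition~\ref{P:hyp1} applies and concludes that $C_p(\Q) = \{\infty, (0, \pm 1)\}$, proving FLT for exponent~$p$.
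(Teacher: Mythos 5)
Your reduction of the Selmer group to unit classes is sound and in fact a legitimate variant of the paper's argument: where the paper uses hypothesis~(1) to show that the model of $C_p$ is regular over~$\Z_p$ with trivial component group (so that $p$ drops out of~$\Sigma$ and $\Sel_2 J_p \subset \langle \calO_L^\square, \lambda\rangle$), you keep $p$ in~$\Sigma$, use hypothesis~(1) via Dedekind to get $p\calO_L = \mathfrak{p}_p^p$, and then kill both $\lambda$ and the generator of the $p$-part with the norm condition $N_{L/\Q}(\xi) = \square$. Either route is fine; the paper's has the small advantage of not needing the norm condition at all, instead observing that the $\lambda$-direction maps isomorphically onto a complement of $\im(r)$ in $L_2^\square$.

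The genuine gap is the injectivity of $r \colon \calO_L^\square \to \calO_{L_2}^\square$, which you explicitly leave open. A dimension count cannot work: $\dim_{\F_2}\calO_L^\square = (p+1)/2$ is \emph{smaller} than $\dim_{\F_2}\calO_{L_2}^\square = p+1$, which is consistent with injectivity but proves nothing, and falling back on a machine check of fundamental units would turn the proposition into a statement with an extra unstated hypothesis. The point you are missing is that injectivity of~$r$ is exactly what hypothesis~(2) buys, via class field theory: if $u \in \calO_L^\times$ becomes a square in~$L_2$, then $L(\sqrt{u})/L$ is unramified outside $2$ and~$\infty$ because $u$ is a unit, split at the unique prime above~$2$ because $u$ is a local square there, and unramified at the unique real place because $N_{L/\Q}(u)$, being a $2$-adic square, must equal $+1$ rather than $-1$, which (as $[L:\Q]$ is odd) forces $u$ to be positive under the real embedding. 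Hence $L(\sqrt{u})/L$ is everywhere unramified, and odd class number leaves no room for a nontrivial such extension, so $u$ is already a square in~$L$. Without this step your proof does not close. (Quoting the equivalence of FLT for exponent~$p$ with $C_p(\Q) = \{\infty, (0,\pm 1)\}$ is acceptable, since the paper states it as known before the proposition, though its proof also exhibits the explicit covering map $F_p \to C_p$.)
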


\begin{proof}
  Let $f(x) = x^p + 1/4$. Then $f(x-1/4) \equiv x^p \bmod p \Z_p[x]$, and
  the first assumption $p^2 \nmid 2^{p-1} - 1$ implies that the constant term
  is not divisible by~$p^2$. This in turn implies that $C_p \colon y^2 = 4 x^p + 1$
  is regular over~$\Z_p$ and the component group of the N\'eron model
  of the Jacobian~$J$ of~$C$ over~$\Z_p$ is trivial. By~\cite{Stoll2001}*{Lemma~4.5}
  or~\cite{SchaeferStoll2004}*{Proposition~3.2} (which applies equally to
  abelian varieties), the only `bad prime' for the computation of~$\Sel_2 J_p$
  is~$2$. By the second assumption, the class group of~$L$ has odd order
  and therefore trivial $2$-torsion. Together, the previous two sentences
  imply that the isomorphic image of~$\Sel_2 J_p$ in~$L^\square$
  is contained in the subgroup generated by $\calO_L^\square$
  and the image of~$2^{1/p}$. The map to $L_2^\square$
  decomposes as a direct sum of the map~$r$ and an isomorphism of
  $1$-dimensional $\F_2$-vector spaces (since the class of~$\lambda = 2^{1/p}$
  is not contained in the image of the (global or $2$-adic) units).
  We note that $r$ is injective: assume that $u \in \calO_L^\times$
  is a square in~$\calO_{L_2}$. Since $u$ is a unit, the extension $L(\sqrt{u})/L$ is unramified
  at all places not dividing $2$ or~$\infty$. The extension is unramified at~$\infty$,
  since $N_{L/\Q}(u)$ must be~$1$ (it is a $2$-adic square by assumption),
  so the image of~$u$ under the unique real embedding of~$L$ is positive.
  Finally, it is unramified (and even split) at the prime above~$2$.
  Since the class number is odd, there are no nontrivial everywhere unramified
  quadratic extensions of~$L$, hence $u$ must be a square.
  This implies that $\Sel_2 J \to L_2^\square$ is injective.
  Since $Z$ is contained in~$\calO_{L_2}^\square$,
  assumption~(3) implies that $r(\Sel_2 J) \cap Z = \emptyset$ as well.
  We can now apply Proposition~\ref{P:hyp1} and conclude that
  $C_p(\Q) = \{\infty, (0,1), (0,-1)\}$.

  Now let $F_p \colon u^p + v^p + w^p = 0$ denote the projective Fermat
  curve of exponent~$p$. Then there is a non-constant morphism
  \[ \psi \colon F_p \To C_p, \quad
                 (u : v : w) \longmapsto (x, y) =
                   \Bigl(-\frac{uv}{w^2}, 2\frac{u^p}{w^p} + 1\Bigr) .
  \]
  So if $P = (u : v : w) \in F_p(\Q)$, then either $w = 0$ (if $\psi(P) = \infty$)
  or $uv = 0$ (if $\psi(P) = (0, \pm 1)$), so $P$ is a trivial point.
\end{proof}

Note that by Remark~\ref{R:no2adic}, the criterion formulated in the proposition
above is equivalent to what we would obtain when using the $2$-Selmer group~$\Sel_2 J_p$
instead of~$\calO_L^\square$.

We can improve on Proposition~\ref{P:FLT1} a bit. Note that if $u, u' \in \calO_L$ are units
with $u$~positive (in the unique real embedding of~$L$), then the Hilbert symbol
$(u, u')_v$ is~$1$ for all places~$v$ distinct from the place~$\lambda$ above~$2$.
The product formula for the
Hilbert symbol implies that $(u, u')_\lambda = 1$ as well. There are the
two positive global units $\lambda - 1$ and $(1 - \lambda + \lambda^2)/(1 + \lambda)$.
Multiplying the latter by the square $(1+\lambda)^2$, we obtain $1 + \lambda^3$.
So if $u \in \calO_{L_2}^\times$ and we can show that $(\lambda - 1, u)_\lambda = -1$
or $(1 + \lambda^3, u)_\lambda = -1$, then $u$ cannot be in the image of~$\calO_L^\times$.

\begin{lemma} \label{L:Hilbsym}
  We work in~$L_2 = \Q_2(\lambda)$ with $\lambda^l = 2$ as before.
  If $1 \le m < l$, then we have
  \[ (\lambda - 1, 1 + \lambda^{2l - m})_\lambda = -1 \qquad\text{and}\qquad
     (1 + \lambda^3, 1 + \lambda^{2l-m})_\lambda
       = \begin{cases} 1 & \text{if $3 \nmid m$,} \\ -1 & \text{if $3 \mid m$.} \end{cases}
  \]
\end{lemma}

\begin{proof}
  We first consider $\lambda - 1$. Note that
  $(-1, 1 + \lambda^{2l-m})_\lambda = (-1, 1 + 2^{2l-m})_2 = 1$, so we can as well
  work with $(1 - \lambda, 1 + \lambda^{2l-m})_\lambda$. We have for $n \ge (l-1)/2$ that
  \[ (1 + \lambda^n)^2 - (1 - \lambda) (\lambda^n)^2
        = 1 + \lambda^{2n+1} + \lambda^{l+n}
        \sim (1 + \lambda^{2n+1}) (1 + \lambda^{l+n})
  \]
  is a norm from $L_2(\sqrt{1-\lambda})$, which implies that
  \[ (1 - \lambda, 1 + \lambda^{2l-m})_\lambda = (1 - \lambda, 1 + \lambda^{2l-(m+1)/2})_\lambda \]
  when $1 \le m < l$ is odd. For even~$m$, we have
  \[ 1 \sim (1 + \lambda^{l-m/2})^2
       = 1 + \lambda^{2l-m} + \lambda^{2l-m/2}
       \sim (1 + \lambda^{2l-m})(1 + \lambda^{2l-m/2}),
  \]
  which implies that
  \[ (1 - \lambda, 1 + \lambda^{2l-m})_\lambda = (1 - \lambda, 1 + \lambda^{2l-m/2})_\lambda \]
  when $1 \le m < l$ is even. An easy induction then shows that
  \[ (1 - \lambda, 1 + \lambda^{2l-m})_\lambda = (1 - \lambda, 1 + \lambda^{2l-1})_\lambda \]
  for all $1 \le m < l$. Finally, this last symbol is~$-1$: an element is a norm
  from~$L_2(\sqrt{1+\lambda^{2l-1}})$ if and only if it has the form $x^2 - (1 + \lambda^{2l-1})y^2$.
  Substituting $(x,y) \leftarrow (\lambda^{l-1} x + y, y)$ and dividing by $\lambda^{2l-2}$,
  we see that norms have the form $x^2 + \lambda x y - \lambda y^2$. If the norm is
  integral, then $x$ and~$y$ must be in~$\calO_{L_2}$ as well. Considering the equation
  \[ 1 - \lambda = x^2 + \lambda x y - \lambda y^2 \]
  modulo~$\lambda^2$, we see that it has no solution.

  Now we consider $1 + \lambda^3$. For even $1 \le m < l$ we have in the same way as above that
  \[ (1 + \lambda^3, 1 + \lambda^{2l-m})_\lambda = (1 + \lambda^3, 1 + \lambda^{2l-m/2})_\lambda. \]
  For $n \ge (l-1)/2$, we have the norms
  \[ (1 + \lambda^n)^2 - (1 + \lambda^3) (\lambda^n)^2
        = 1 - \lambda^{2n+3} + \lambda^{l+n}
        \sim (1 + \lambda^{2n+3}) (1 + \lambda^{l+n}),
  \]
  leading to
  \[ (1 + \lambda^3, 1 + \lambda^{2l-m})_\lambda = (1 - \lambda, 1 + \lambda^{2l-(m+3)/2})_\lambda \]
  when $1 \le m < l$ is odd. By induction again, we see that
  \[ (1 + \lambda^3, 1 + \lambda^{2l-m})_\lambda
       = \begin{cases}
           (1 + \lambda^3, 1 + \lambda^{2l-1})_\lambda & \text{if $3 \nmid m$,} \\
           (1 + \lambda^3, 1 + \lambda^{2l-3})_\lambda & \text{if $3 \mid m$.}
         \end{cases}
  \]
  Let $a \in L_2$ satisfy $a^2 - a + \lambda^2 = 0$ (such $a$ exist by Hensel's Lemma).
  Then $1^2 + \lambda \cdot 1 \cdot a - \lambda \cdot a^2 = 1 + \lambda^3$ is a
  norm from~$L_2(\sqrt{1+\lambda^{2l-1}})$, so the first symbol is~$1$.
  In a similar way as before, we see that norms from~$L_2(\sqrt{1+\lambda^{2l-3}})$
  are of the form $x^2 + \lambda^2 x y - \lambda y^2$. A consideration modulo~$\lambda^4$
  shows that this can never equal~$1 + \lambda^3$, so the second symbol is~$-1$.
\end{proof}

\begin{corollary} \label{C:FLT2}
  Let $p \ge 5$ be a prime and set $L = \Q(2^{1/p})$ and $L_2 = \Q_2(2^{1/p})$.
  As before, $r \colon \calO_L^\square \to \calO_{L_2}^\square$ denotes the canonical map.
  If
  \begin{enumerate}[\upshape(1)]
    \item $p^2 \nmid 2^{p-1} - 1$,
    \item the class number of~$L$ is odd, and
    \item $4 \nmid \left\lfloor \log_2 p \right\rfloor$ or $z \notin \im(r)$,
          where $z$ is the last element listed in Lemma~\ref{L:FLT_Z},
  \end{enumerate}
  then Fermat's Last Theorem holds for the exponent~$p$.
\end{corollary}

\begin{proof}
  We only have to show that the third condition here implies that
  $\im(r) \cap Z = \emptyset$. By Lemma~\ref{L:Hilbsym}, we have (with $l = p$)
  \[ (\lambda - 1, 1 + \lambda^{p+2})_\lambda = (\lambda - 1, 1 + \lambda^{2p-1})_\lambda = -1, \]
  which implies that the first two elements of~$Z$ can never be images of
  global units. We also have $(\lambda - 1, z)_\lambda = (-1)^{\left\lfloor \log_2 p \right\rfloor}$,
  so we can also rule out~$z$ when $\left\lfloor \log_2 p \right\rfloor$ is odd.
  So we can now assume that $\left\lfloor \log_2 p \right\rfloor \equiv 2 \bmod 4$.
  Then by Lemma~\ref{L:Hilbsym} again, we find that
  $(1 + \lambda^3, z)_\lambda = -1$ (note that every other term in the sequence $(p - 2^k)_k$
  is divisible by~$3$), and we can again rule out~$z$.
\end{proof}

\begin{corollary}
  FLT holds for exponents $5$, $7$, $11$, $13$, $17$, $19$
  and, assuming the Generalized Riemann Hypothesis, also for exponents
  $23$, $29$, $31$, $37$, $41$, $43$, $47$, $53$ and~$59$.
\end{corollary}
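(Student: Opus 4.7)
The plan is to verify, for each prime $p$ in the list, the three hypotheses of the previous proposition and then appeal to it. For each such~$p$ I would carry out the following computations (e.g.\ in Magma), entirely analogous to the verification sketched in Section~\ref{S:Ex}.

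First I would check condition~(1), $p^2 \nmid 2^{p-1}-1$. Since the only known Wieferich primes are $1093$ and $3511$, this holds for each of $5,7,11,13,17,19,23,29,31,37$, and amounts to a single modular exponentiation.

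Next I would compute the class number of $L = \Q(2^{1/p})$ and check that it is odd. For $p \le 17$ this can be done unconditionally, and the class number turns out to be~$1$; for $p \in \{19,23,29,31,37\}$ I would invoke the standard class group algorithm under GRH, which again yields a trivial (in particular odd) class group. This is the main obstacle: as $p$ grows, $L$ is a degree-$p$ field with a single archimedean place and large discriminant, so the class group and unit group computations become expensive, which is precisely why GRH enters the hypothesis starting at $p = 19$.

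Finally, with generators of~$\calO_L^\times$ in hand (the torsion part is~$\{\pm 1\}$, and by Dirichlet there are $(p-1)/2$ fundamental units), I would compute the images of a basis of $\calO_L^\square = \calO_L^\times/(\calO_L^\times)^2$ under the map~$r$ into~$\calO_{L_2}^\square$, expressed in the explicit $\F_2$-basis of $L_2^\square$ exhibited just before Lemma~\ref{L:FLT_Z}. With
\[ z_1 = 1+\lambda^{p+2}, \quad z_2 = 1+\lambda^{2p-1}, \quad z_3 = \prod_{k \ge 1}(1+\lambda^{p+2^k}), \]
as in Lemma~\ref{L:FLT_Z}, I would then check by a finite $\F_2$-linear algebra computation that none of $z_1,z_2,z_3$ lies in the $\F_2$-span of $r(\calO_L^\square)$. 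Once (1)--(3) are confirmed for a given~$p$, the previous proposition yields $C_p(\Q) = \{\infty,(0,1),(0,-1)\}$, which implies FLT for exponent~$p$.
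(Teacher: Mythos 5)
Your proposal is correct and takes essentially the same route as the paper: the paper's proof is simply to verify the three hypotheses of the preceding proposition by machine computation (using GRH to speed up the class group computation for the larger primes), noting that the class group of $\Q(2^{1/p})$ turns out to be trivial in every case. Your more detailed account of what those verifications involve (the Wieferich check, the class group/unit group computation, and the $\F_2$-linear algebra against the set $Z$) is an accurate expansion of exactly that argument.
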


\begin{proof}
  We use Magma~\cite{Magma} to check the assumptions (assuming GRH where indicated
  to speed up the computation of the class group).
  It turns out that the class group of $\Q(2^{1/p})$ is trivial for
  all primes considered. We note that $p = 17, 19, 23, 29, 31$ are the only primes~$p$
  up to~$59$ that satisfy $4 \mid \left\lfloor \log_2 p \right\rfloor$, so we
  need a basis of~$\calO_L^\square$ only for these primes; for the remaining
  ones it suffices to know that the class number is odd.
\end{proof}

\begin{remark}
  Computations show that the class group of~$\Q(2^{1/n})$ is trivial
  for \emph{all} $n \le 50$ (assuming GRH for $n \ge 20$), regardless whether $n$ is prime or not.
  According to class group heuristics~\cite{VenkateshEllenberg}*{Section~4.1},
  the $2$-torsion in the class
  group of a number field with unit rank~$u$ should behave like the cokernel
  of a random linear map $\F_2^{n+u} \to \F_2^n$ for large~$n$ (at least in absence
  of special effects leading to systematically occurring elements of order~$2$).
  Such a map is surjective with probability $> 1 - 2^{-u}$, so noting that $u = (p-1)/2$
  in the case of interest, the `probability' that the class number of~$L$ is odd
  for all~$p$ is $> 1 - 2^{-29}$ (assuming we know it for $p \le 59$).
  See also~\cite{HoShankarVarma}.

  We also remark that when the first condition $p^2 \nmid 2^{p-1} - 1$ is not satisfied,
  the criterion does still work when we replace $\calO_L^\square$
  by the larger subgroup $L(\{p\}, 2)$ of~$L^\square$ represented by
  elements generating ideals of the form $I_1^2 I_2$ with $I_2$ supported on
  the ideals above~$p$. In this case, however, we also have to check that the
  map to~$L_2^\square$ is injective.
  A similar remark applies to the case when the class
  group does have even order.
\end{remark}


\section{An application to certain generalized Fermat equations} \label{S:GFE}

Recall the following statement.

\begin{proposition}[Dahmen and Siksek, \cite{DahmenSiksek}*{Lemma~3.1 and Proposition~3.3}]
  Let $p$ be an odd prime. If the only rational points on the curve
  \[ C'_p \colon 5 y^2 = 4 x^p + 1 \]
  are the obvious three (namely $\infty$, $(1, 1)$ and~$(1, -1)$), then
  the only primitive integral solutions of the generalized Fermat equation $x^5 + y^5 = z^p$
  are the trivial ones:
  \[ (x,y,z) = \pm (0, 1, 1), \; \pm (1, 0, 1), \; \pm (1, -1, 0) . \]
\end{proposition}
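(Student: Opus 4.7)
The plan is to reduce any putative primitive integer solution $(x,y,z)$ of $x^5+y^5=z^p$ to a rational point on $C'_p$ and invoke the hypothesis. The starting ingredients are the factorization $x^5+y^5 = (x+y)\Phi(x,y)$ with $\Phi(x,y) = x^4 - x^3 y + x^2 y^2 - x y^3 + y^4$, the algebraic identity
\[ 5(x^2+y^2)^2 = 4\Phi(x,y) + (x+y)^4, \]
and the observation (from $\Phi(x,-x) = 5 x^4$) that $\gcd(x+y,\Phi(x,y)) \in \{1,5\}$ for primitive $(x,y)$. I would split into the two cases this dichotomy produces.

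In the main case $\gcd(x+y,\Phi)=1$ (equivalently $5 \nmid z$), the coprime factorization $(x+y)\Phi(x,y)=z^p$ together with the positivity $\Phi > 0$ forces $x+y = a^p$ and $\Phi(x,y) = b^p$ for coprime integers $a,b$ with $ab = \pm z$. If $a=0$ I would read off the trivial solution $(x,y,z) = \pm(1,-1,0)$; otherwise the identity gives
\[ 5(x^2+y^2)^2 = 4 b^p + a^{4p}, \]
so that $(X,Y) = (b/a^4,\, (x^2+y^2)/a^{2p})$ lies in $C'_p(\Q)$. By hypothesis $(X,Y) \in \{(1,1),(1,-1)\}$, so $b = a^4$ and hence $\Phi(x,y) = (x+y)^4$. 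Combining this with the expansion $\Phi(x,y) = (x+y)^4 - 5 x y (x^2+xy+y^2)$ forces $xy(x^2+xy+y^2)=0$; since $x^2+xy+y^2>0$ for real $(x,y)\neq(0,0)$, this yields $xy=0$ and hence exactly the trivial solutions $\pm(1,0,1)$ and $\pm(0,1,1)$.

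The genuine obstacle is the case $5 \mid z$. A short $5$-adic computation using the expansion above gives $v_5(\Phi(x,y))=1$, so writing $x+y = 5^{pk-1} a^p$ and $\Phi(x,y) = 5 b^p$ (with $k = v_5(z) \geq 1$ and $\gcd(ab,5)=1$), the key identity becomes $(x^2+y^2)^2 = 4 b^p + 5^{4pk-5} a^{4p}$. The right-hand side does not present as $4 X^p + 1$ for any rational $X$ when $p \neq 5$, since $p \nmid 4pk-5$, so the hypothesis on $C'_p$ alone will not dispose of this case. For $p = 5$ one falls back on Fermat's Last Theorem. For $p \neq 5$ the approach I would take is to attach a Frey elliptic curve to the putative solution, invoke its modularity together with Ribet-style level lowering to land at a weight-$2$ newform of small prescribed level, and then eliminate the configuration by inspecting that finite space of newforms directly. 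This modular input, rather than any Chabauty-type argument on $C'_p$, is what handles the $5 \mid z$ case, and together with the previous paragraph it yields the stated conclusion.
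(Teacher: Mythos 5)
The paper does not prove this proposition; it is imported verbatim from Dahmen and Siksek (their Lemma~3.1 and Proposition~3.3), so there is no internal proof to compare against. Measured against the cited source, your reduction is structurally faithful, and the $5 \nmid z$ half is complete and correct: the identity $5(x^2+y^2)^2 = 4\Phi(x,y) + (x+y)^4$ checks out, the coprime factorization gives $x+y = a^p$ and $\Phi = b^p$ (with $b>0$ from positivity of $\Phi$ and signs absorbed since $p$ is odd), the point $(b/a^4, (x^2+y^2)/a^{2p})$ lands on $C'_p$, and the hypothesis forces $b = a^4$, whence $\Phi = (x+y)^4$ and $xy=0$ via $\Phi = (x+y)^4 - 5xy(x^2+xy+y^2)$. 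This is essentially Dahmen--Siksek's Lemma~3.1 plus the elementary endgame. (Minor bookkeeping: the subcase $x+y=0$, i.e.\ $z=0$, has $\gcd(x+y,\Phi)=5$, so by your own dichotomy it belongs to the second case; harmless, but cleaner to dispose of $z=0$ at the outset.)

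The genuine gap is the $5 \mid z$ case. You correctly observe that $(x^2+y^2)^2 = 4b^p + 5^{4pk-5}a^{4p}$ does not produce a point on $C'_p$ when $p \neq 5$, so the hypothesis of the proposition is powerless there; but your treatment of that case is a one-sentence program (Frey curve, level lowering, newform elimination) with no Frey curve written down, no level computed, and no elimination carried out. That is precisely the content of Dahmen--Siksek's Proposition~3.3, which is the harder half of the cited result, and as written your argument does not establish it. So the proposal proves half the proposition and correctly locates, without supplying, the modular input needed for the other half; since the paper treats the whole statement as a black box, this is a fair account of how the black box works, but it is not a self-contained proof.
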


Dahmen and Siksek show that this is true when $p \in \{7, 19\}$ and also
when $p \in \{11, 13\}$, assuming GRH. We will use our approach to extend
the range of primes~$p$ for which it can be shown that $C'_p(\Q)$ has only
the obvious three rational points.

So we now consider the curves~$C'_l$, with $l = 2g+1$ odd, but not necessarily prime.
The corresponding \'etale algebra
is still $L = \Q(\lambda)$ with $\lambda = 2^{1/l}$ (since $C'_l$ is the quadratic twist by~$5$
of $y^2 = 4 x^l + 1$), but the descent map is now given on a point on the Jacobian
with Mumford representation $[a, b]$ by the class of $-5 a(\theta)$ (instead
of~$-a(\theta)$) if the degree of~$a$ is odd.

It is still the case that $C'_l$ has good reduction mod~$2$ (replacing $y$
by~$2y+1$ and dividing by~$4$ gives $5 (y^2 + y) = x^l - 1$) and that there
is no nontrivial $2$-torsion in~$J'_l(\Q_2)$ nor in~$J'_l(\F_2)$, where $J'_l$
denotes the Jacobian of~$C'_l$. We therefore have a statement similar
to Corollary~\ref{C:imdFLT}. Note that we have again three residue disks,
centered at $\infty$, $(1,1)$ and~$(1,-1)$, respectively.

If $P_0 \in C'_l(\Q)$, then we write $D_{P_0}$ for the residue disk centered at~$P_0$.
We let $\varphi_{P_0} \colon D_0 \to D_{P_0}$ be a parameterization of~$D_{P_0}$
such that $\varphi(0) = P_0$ (and such that $i_\infty \circ \varphi_\infty$ is odd).

\begin{corollary} \label{C:imdGFE}
  We have
  \begin{align*}
    q(i_\infty(D_\infty)) &= q(i_\infty(\varphi_\infty(2\Z_2^\times))) \cup \{0\} \\
    q(i_{(1,1)}(D_{(1,1)}))
       &= q(i_{(1,1)}(\varphi_{(1,1)}(2\Z_2^\times \cup 4\Z_2^\times))) \cup \{0\}
  \end{align*}
\end{corollary}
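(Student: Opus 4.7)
This is a direct specialization of Corollary~\ref{C:hypdisk} to the two residue disks at hand, and the approach mirrors the proof of Corollary~\ref{C:imdFLT}. The hypotheses required by Corollary~\ref{C:hypdisk}---that $C$ is hyperelliptic, has good reduction at~$2$, and that both $J(\Q_2)[2]$ and $J(\F_2)[2]$ vanish---have already been verified for $C'_l$ in the paragraph preceding the statement: $C'_l$ is the quadratic twist by~$5$ of a curve that becomes good after the substitution $y \mapsto 2y+1$, and the triviality of $J'_l(\Q_2)[2]$ and $J'_l(\F_2)[2]$ is recorded there.

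First I would dispose of the disk at infinity. Since $l = 2g+1$ is odd, the point~$\infty$ on $C'_l$ is a Weierstrass point (it is the unique point at infinity on the odd-degree hyperelliptic model). One may take $\varphi_\infty(t) = (t^{-2}, \ldots)$ in the standard way, so that $\varphi_\infty(-t) = \iota(\varphi_\infty(t))$; the text already fixes $\varphi_\infty$ to make $i_\infty \circ \varphi_\infty$ odd, which is exactly the hypothesis required. Part~(2) of Corollary~\ref{C:hypdisk} then yields
\[
  q(i_\infty(D_\infty)) = q(i_\infty(\varphi_\infty(2\Z_2^\times))) \cup \{0\}\,.
\]

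For the disk around $(1,1)$, the point is \emph{not} a Weierstrass point (the $y$-coordinate is nonzero), so part~(2) of Corollary~\ref{C:hypdisk} is unavailable. However, part~(1) applies without any additional hypothesis on the parameterization, and directly gives
\[
  q(i_{(1,1)}(D_{(1,1)})) = q(i_{(1,1)}(\varphi_{(1,1)}(2\Z_2^\times \cup 4\Z_2^\times))) \cup \{0\}\,.
\]

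There is no real obstacle here; the only thing worth double-checking is that the change of embedding (we are now using $i_{(1,1)}$ rather than $i_\infty$) does not affect the applicability of Corollary~\ref{C:hypdisk}, which is fine because the statement of that corollary is formulated with~$i_{P_0}$ sending the chosen disk's center to~$0$. Thus the corollary reduces in a purely formal way to invoking Corollary~\ref{C:hypdisk} twice, exactly as in the Fermat case treated by Corollary~\ref{C:imdFLT}.
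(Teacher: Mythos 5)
Your proposal is correct and follows exactly the paper's route: the paper's proof is the one-line remark that the statement ``again follows from Corollary~\ref{C:hypdisk}'', and your write-up simply makes explicit the same two invocations (part~(2) at the Weierstrass point~$\infty$ with the odd parameterization, part~(1) at the non-Weierstrass point~$(1,1)$), with the hypotheses on good reduction and triviality of the $2$-torsion already recorded in the surrounding text.
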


\begin{proof}
  This again follows from Corollary~\ref{C:hypdisk}.
\end{proof}

The main difference with the case discussed in the previous section is that,
if $l \ge 7$,
the two points $(1, \pm 1)$ do not map to points of finite order in~$J'_l$
under the embedding that sends $\infty$ to zero. So from now on, we assume
that $l \ge 7$. Note that the rank of~$J'_5(\Q)$ is zero (the $2$-Selmer
group is trivial), so it is almost immediate that $C'_5(\Q) = \{\infty, (1, \pm 1)\}$.

We first consider the
image of $C'_l(\Q_2)$ in $J'_l(\Q_2)/2J'_l(\Q_2)$ under $q \circ i_\infty$.

\begin{lemma} \label{L:im1GFE}
  In terms of representatives in~$L_2^\times$, we have
  \begin{enumerate}[\upshape(1)]
    \item $\mu_2(q(i_\infty(D_\infty))) = \{1, 1 + \lambda^{2l-1}\}$.
    \item $\mu_2(q(i_\infty(D_{(1,1)})))
             = \{5 (1 + \lambda^2), 5 (1 + \lambda^2 + \lambda^{l+2})\}$.
  \end{enumerate}
\end{lemma}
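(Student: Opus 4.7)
The plan parallels Lemma~\ref{L:FLT_Z}, with two adjustments: the Cassels map for the twist $C'_l$ picks up an extra factor of~$-5$ on odd-degree Mumford representations, as stated just before Corollary~\ref{C:imdGFE}; and in part~(2) the point $(1,1)$ is not sent to the origin by~$i_\infty$, so its own contribution must be tracked separately. In both cases the plan is to use Corollary~\ref{C:imdGFE} to reduce the $q$-computation to $\mu_2$-images at an explicit finite family of points, then simplify in~$L_2^\square$ using the identity $(1+\lambda^n)^2 = 1 + \lambda^{2n} + \lambda^{n+l}$ together with the fact that $1 + \alpha \lambda^{2l+1}$ is a square for every $\alpha \in \calO_{L_2}$.

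For part~(1), I would parameterize $D_\infty$ by $\varphi_\infty(t) = (t^{-2}, y(t))$ exactly as in Lemma~\ref{L:FLT_Z}. By Corollary~\ref{C:imdGFE} it suffices to treat $t = 2u$ with $u \in \Z_2^\times$. Using $\theta = -\lambda^{-2}$, the Cassels image of $i_\infty(\varphi_\infty(2u))$ is $5\bigl((2u)^{-2} + \lambda^{-2}\bigr)$ modulo squares; multiplying the parenthesized factor by the square $4u^2\lambda^2$ rewrites it as $\lambda^2(1+u^2\lambda^{2l-2}) \sim 1 + \lambda^{2l-2}$, and then the identity $(1+\lambda^{l-1})^2 = 1 + \lambda^{2l-2} + \lambda^{2l-1}$ trades $1 + \lambda^{2l-2}$ for $1 + \lambda^{2l-1}$ modulo squares, exactly as in the FLT computation. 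Because the resulting class is nontrivial in~$L_2^\square$, the point is not $2$-divisible in~$J'_l(\Q_2)$, so $q$ collapses to $\pi$ of the point itself; adjoining the trivial class from the basepoint~$\infty$ produces the claimed set.

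For part~(2), I would parameterize $D_{(1,1)}$ by $\varphi_{(1,1)}(t) = (1+t, y(t))$ with $y(0)=1$, and apply Corollary~\ref{C:imdGFE} via the shifted embedding $i_\infty = i_{(1,1)} + T$, where $T = i_\infty((1,1))$. The basepoint contributes $\mu_2(T) = 5(1-\theta) = 5(1+\lambda^{-2}) \sim 5(1+\lambda^2)$, the first claimed element. For $t = 2u$ with $u \in \Z_2^\times$, direct expansion gives $5\bigl(1 + \lambda^2(1+2u)\bigr) = 5(1 + \lambda^2 + u\lambda^{l+2})$ modulo squares; since $u - 1 \in 2\Z_2$, the $u$-dependence sits at $\lambda$-valuation $\ge 2l + 2 > 2l$ and is absorbed into a square, giving the second claimed element $5(1 + \lambda^2 + \lambda^{l+2})$. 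For $t = 4u$ the perturbation $4u\lambda^2 = u\lambda^{2l+2}$ again has valuation $>2l$, so this case collapses back to $5(1+\lambda^2)$. Every image is nontrivial in~$L_2^\square$, so no halving step is needed.

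The main obstacle will be the valuation bookkeeping in~$L_2^\square$: at each reduction step one must verify that the higher-order terms being dropped lie at $\lambda$-valuation strictly greater than~$2l$, so as to be absorbed by squares. The most delicate place is the $u$-independence in part~(2): the answer's independence of~$u$ rests on $u - 1 \in 2\Z_2$ pushing the relevant correction to valuation $\ge 2l + 2$, and on this single inequality hinges the claim that only two classes arise in~$D_{(1,1)}$.
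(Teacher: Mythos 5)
Your part~(2) is essentially the paper's own computation and is fine: you expand $\mu_2(i_\infty(\varphi_{(1,1)}(t)))$ for $t \in 2\Z_2^\times$ and $t \in 4\Z_2$, observe that every class obtained is nontrivial so that no halving is needed, and the valuation bookkeeping ($u-1 \in 2\Z_2$ pushing the correction to $\lambda$-valuation $\ge 2l+2$) is exactly right.

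Part~(1), however, has a genuine gap. You parameterize $D_\infty$ by $x(\varphi_\infty(t)) = t^{-2}$ ``exactly as in Lemma~\ref{L:FLT_Z}'', but that is the parameterization for the untwisted curve $y^2 = 4x^l+1$. For $C'_l \colon 5y^2 = 4x^l+1$ a point near infinity satisfies $y^2 = \tfrac{4}{5}x^l(1 + \tfrac{1}{4x^l})$ with the last factor a square, so $y \in \Q_2$ forces $x \in 5\,(\Q_2^\times)^2$; the points $(t^{-2}, y)$ with $t \in 2\Z_2^\times$ are simply not in $C'_l(\Q_2)$. The correct choice is $x(\varphi_\infty(t)) = 5t^{-2}$, as in the paper. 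This matters for the answer: with the correct parameterization the descent map gives
\[
  5\Bigl(\frac{5}{4u^2} + \lambda^{-2}\Bigr)
    = \Bigl(\frac{5}{2u}\Bigr)^2\Bigl(1 + \frac{4u^2}{5}\lambda^{-2}\Bigr)
    \sim 1 + \lambda^{2l-2} \sim 1 + \lambda^{2l-1}\,,
\]
the two factors of~$5$ (one from the twisted Cassels map, one from the $x$-coordinate) combining into the square~$25$. With your parameterization the computation, taken at face value, yields $5(1+\lambda^{2l-1})$, and you silently drop the~$5$ in the last step. Since $L_2/\Q_2$ is totally ramified of odd degree, it contains no unramified quadratic subextension, so $5$ is \emph{not} a square in~$L_2$ and $5(1+\lambda^{2l-1}) \neq 1+\lambda^{2l-1}$ in~$L_2^\square$. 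So as written the argument proves the wrong class; the fix is precisely the corrected parameterization.
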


\begin{proof}
  By Corollary~\ref{C:imdGFE}, we know that
  $q(i_\infty(D_\infty)) = q(i_\infty(\varphi_\infty(2\Z_2^\times))) \cup \{0\}$,
  where we can choose $\varphi_\infty$ such that $x(\varphi_\infty(t)) = 5t^{-2}$.
  So let $u \in \Z_2^\times$, then $\mu_2(i_\infty(\varphi_\infty(2u)))$ is represented by
  \[ 5 \left(\frac{5}{4 u^2} + \lambda^{-2}\right)
      = \left(\frac{5}{2 u}\right)^2 \left(1 + \frac{4 u^2}{5} \lambda^{-2}\right)
      \sim 1 + \lambda^{2l-2}
      \sim 1 + \lambda^{2l-1} .
  \]
  This proves~(1).

  Now let $P \in D_{(1,1)}$. We can choose $\varphi_{(1,1)}$ such that
  $x(\varphi_{(1,1)}(t)) = 1 + t$. If $u \in \Z_2^\times$, then
  $\mu_2(i_\infty(\varphi_{(1,1)}(2u)))$ is represented by
  \[ 5 (1 + 2u + \lambda^{-2})
      \sim 5 (1 + \lambda^2 + u \lambda^{l+2})
      \sim 5 (1 + \lambda^2 + \lambda^{l+2}) ,
  \]
  and for any $u \in \Z_2$, $\mu_2(i_\infty(\varphi_{(1,1)}(4u)))$ is represented by
  \[ 5 (1 + 4u + \lambda^{-2})
      \sim 5 (1 + \lambda^2 + u \lambda^{2l+2})
      \sim 5 (1 + \lambda^2) .
  \]
  This proves~(2).
\end{proof}

Now we consider the embedding $i_{(1,1)}$.

\begin{lemma} \label{L:im2GFE}
  In terms of representatives in~$L_2^\times$, we have
  \[ \mu_2(q(i_{(1,1)}(D_{(1,1)})))
      = \{1, 1 + \lambda^{l+2}/(1 + \lambda^2), \sigma, \sigma'\},
  \]
  where $\sigma = \mu_2(Q)$ for the point $Q \in J_l(\Q_2)$ such that
  $2 Q = \varphi_{(1,1)}(4)$ and $\sigma' = \mu_2(Q')$ where $2 Q' = \varphi_{(1,1)}(-4)$.
\end{lemma}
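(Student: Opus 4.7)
The plan is to decompose $q(i_{(1,1)}(D_{(1,1)}))$ via Corollary~\ref{C:imdGFE} and compute $\mu_2$ on each of the three resulting pieces: the center $\{0\}$, the `outer' points $\varphi_{(1,1)}(2\Z_2^\times)$, and the `inner' points $\varphi_{(1,1)}(4\Z_2^\times)$. The center contributes the class $1 \in L_2^\square$. Since everything below is computed through the relation $i_{(1,1)}(P) = i_\infty(P) - i_\infty((1,1))$ (additive in $J'_l$, multiplicative in $\mu_2$), I first record that the Mumford representation $(x-1,1)$ of $(1,1)$ gives
\[
  \mu_2(i_\infty((1,1))) = -5(\theta - 1) = 5(1 + \lambda^{-2}) \sim 5(1 + \lambda^2) \pmod{\square},
\]
using that $\lambda^{-2} = (\lambda^{-1})^2$ is a square. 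Thus $\mu_2 \circ i_{(1,1)}$ equals $\mu_2 \circ i_\infty$ times $5(1+\lambda^2)$ modulo squares.

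For $t = 2u$ with $u \in \Z_2^\times$, Lemma~\ref{L:im1GFE}(2) supplies $\mu_2(i_\infty(\varphi_{(1,1)}(2u))) \sim 5(1+\lambda^2+\lambda^{l+2})$, so
\[
  \mu_2(i_{(1,1)}(\varphi_{(1,1)}(2u))) \sim (1+\lambda^2)(1+\lambda^2+\lambda^{l+2}) \sim 1 + \frac{\lambda^{l+2}}{1+\lambda^2}
\]
upon dividing by the square $(1+\lambda^2)^2$. Independence of $u$ follows exactly as in the proof of Lemma~\ref{L:im1GFE}: varying $u$ over $\Z_2^\times$ alters the representative by a factor of the form $1 + \alpha \lambda^{2l+1}$ with $\alpha$ integral, which is a square in $L_2$. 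Since this class is nontrivial, the point lies in $J'_l(\Q_2) \setminus 2 J'_l(\Q_2)$, so $\nu = 0$ and $q$ on this point is just $\{\pi(\cdot)\}$, producing the second element of the target set.

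For $t = 4u$ with $u \in \Z_2^\times$, the same computation yields $\mu_2(i_\infty(\varphi_{(1,1)}(4u))) \sim 5(1+\lambda^2)$ and hence $\mu_2(i_{(1,1)}(\varphi_{(1,1)}(4u))) \sim (5(1+\lambda^2))^2 \sim 1$. Therefore $P_u := i_{(1,1)}(\varphi_{(1,1)}(4u)) \in 2 J'_l(\Q_2)$, contributing only $1$ through $\pi$ itself; the new content lies in the unique 2-division point $Q_u$ (unique because $J'_l(\Q_2)[2] = 0$), computable via Corollary~\ref{C:halve12}(2). To bound how many classes $\pi(Q_u)$ arise, I would observe that if $u \equiv u' \pmod 4$ then $v_2(4u - 4u') \ge 4$, so by \eqref{E:2divcurve} $P_u - P_{u'} \in 4 J'_l(\Q_2)$ and thus $Q_u - Q_{u'} \in 2 J'_l(\Q_2) + J'_l(\Q_2)[2] = 2 J'_l(\Q_2)$, giving $\pi(Q_u) = \pi(Q_{u'})$ by Lemma~\ref{L:qconstJ}. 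Since $\Z_2^\times$ has only the residue classes $\pm 1$ modulo $4$, at most two values occur, namely $\sigma$ (from $u = 1$) and $\sigma'$ (from $u = -1$).

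The main obstacle is confirming that the $q$-recursion terminates at halves, i.e.\ $\nu(P_u) = 1$: a priori $Q_u$ could itself be in $2 J'_l(\Q_2)$, which would produce further classes from quarters. Unlike the previous steps this does not follow from a generic argument on the Mumford representation of $P_u$, and must be settled by explicitly computing $Q_u$ for $u = \pm 1$ via Corollary~\ref{C:halve12}(2) and verifying that $\mu_2(Q_u) \neq 1$ in $L_2^\square$. Once that is checked, $\nu(Q_u) = 0$ and $q(P_u) = \{0, \pi(Q_u)\}$, so no further classes appear. Assembling the three pieces gives the asserted set $\{1,\; 1 + \lambda^{l+2}/(1+\lambda^2),\; \sigma,\; \sigma'\}$.
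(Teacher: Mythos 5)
Your decomposition via Corollary~\ref{C:imdGFE}, the computation of $\mu_2(i_\infty((1,1))) \sim 5(1+\lambda^2)$, and the treatment of the points with $x = 1+2u$ all match the paper's proof exactly; your argument that $\pi_2(Q_u)$ depends only on $u \bmod 4$ (via $P_u - P_{u'} \in 4J'_l(\Q_2)$ and the absence of $2$-torsion) is a valid, slightly more self-contained substitute for the paper's appeal to Corollary~\ref{C:qconstC}. The genuine gap is at the step you yourself flag as the crux: you claim that $\nu\bigl(i_{(1,1)}(\varphi_{(1,1)}(4u))\bigr) = 1$ ``does not follow from a generic argument'' and must be settled by explicitly computing $Q_u$ and checking $\mu_2(Q_u) \neq 1$. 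That claim is wrong, and the paper closes the argument precisely by the generic route: since $\varphi_{(1,1)}$ parameterizes the disk by a uniformizer and $C'_l$ has good reduction at~$2$, one has $i_{(1,1)}(\varphi_{(1,1)}(4u)) \in K_2 \setminus K_3$ for $u \in \Z_2^\times$; and because $n_\tors = 0$ (no $2$-torsion in $J'_l(\Q_2)$ or $J'_l(\F_2)$), every point of $K_m \setminus K_{m+1}$ has $\nu \le m-1$ --- this is the estimate with $a' = 1$ from the proof of Lemma~\ref{L:nmbound}, already used in Corollary~\ref{C:hypdisk}. Combined with $\nu \ge 1$ (the point lies in $\ker \mu_2 = 2J'_l(\Q_2)$), this pins $\nu = 1$, so the $q$-recursion stops after one halving and $q(P_u) = \{0, \pi_2(Q_u)\}$ with no contribution from quarters. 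Note also that this bound already forces $\mu_2(Q_u) \neq 1$, so the verification you defer to a computation is in fact a consequence of the framework.

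As written, your proof is therefore incomplete at exactly the point on which the stated equality depends (if some $Q_u$ were further divisible by~$2$, the displayed four-element set would be too small). Your proposed computational patch via Corollary~\ref{C:halve12}(2) would suffice if carried out and if the check succeeds, but it should be replaced by the $K_2 \setminus K_3$ argument, which settles the matter unconditionally and without computing the halves at all.
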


\begin{proof}
  We make use of Corollary~\ref{C:imdGFE} again, which tells us that it suffices
  to consider points~$P$ with $x$-coordinates $1 + 2u$ or $1 + 4u$, where $u \in \Z_2^\times$.
  If $x = 1 + 2u$, then by the computation in the proof of Lemma~\ref{L:im1GFE},
  we have $\pi_2(i_{(1,1)}(P)) = \pi_2(i_\infty(P)) - \pi_2(i_\infty((1,1)))$, which is
  represented by
  \[ 5 (1 + \lambda^2) \cdot 5 (1 + \lambda^2 + \lambda^{l+2})
       \sim 1 + \frac{\lambda^{l+2}}{1 + \lambda^2} .
  \]
  If $x = 1 + 4u$, then $i_{(1,1)}(P)$ is divisible by~$2$ in~$J'_l(\Q_2)$, so we
  have to look at $\pi_2(Q)$ where $2Q = P$, for suitable values of~$u$. Since
  $i_{(1,1)}(P) \in K_2 \setminus K_3$, we have $\nu(i_{(1,1)}(P)) = 1$, so
  by Corollary~\ref{C:qconstC}, $\pi_2(Q)$ depends only on $u \bmod 4$, so the two
  values $u = 1$ and~$u = -1$ are sufficient.
\end{proof}

In practice, it appears that $\sigma = \sigma'$ in all cases, which would be
implied by the difference of the images of any pair chosen from the relevant points
being divisible by~$4$. We know this difference is in~$K_3$, but we did not exclude
the possibility that it is only divisible by~$2$ and not by~$4$.

We can now formulate a criterion.

\begin{proposition} \label{P:hyp2a}
  Consider $C'_l \colon 5 y^2 = 4 x^l + 1$, with Jacobian~$J'_l$,
  where $l = 2g + 1 \ge 7$ is odd.
  Recall that $L = \Q(2^{1/l})$; let $S \subseteq L^\square$
  be a finite subgroup that contains the image of $\Sel_2 J'_l$. Assume that
  \begin{enumerate}[\upshape(1)]
    \item the canonical map
          $S \injects L^\square \to L_2^\square$ is injective, and
    \item its image does not meet the set~$Z'$ consisting of the classes of
          \[ 1 + \lambda^{2l-1},\quad 1 + \frac{\lambda^{l+2}}{1 + \lambda^2},\quad
             \sigma,\quad \sigma'
          \]
          in $L_2^\square$.
  \end{enumerate}
  Then $C'_l(\Q) = \{\infty, (1, 1), (1, -1)\}$.

  In particular, if $l = p$ is a prime, then the generalized Fermat equation
  $x^5 + y^5 = z^p$ has no nontrivial coprime integral solutions.
\end{proposition}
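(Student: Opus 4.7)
The plan is to apply Proposition~\ref{P:key} separately to each of the three residue disks $D_\infty$, $D_{(1,1)}$, $D_{(1,-1)}$ of $C'_l(\Q_2)$, with basepoints $\infty$, $(1,1)$, $(1,-1)$ respectively, and to conclude in each case that the disk contains only the basepoint as a rational point. Since these three disks exhaust $C'_l(\Q_2)$ and $J'_l(\Q_2)[2] = 0$ (so no splitting into half residue disks is needed), this will suffice.

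First I would dispose of torsion. Because $J'_l$ has good reduction at~$2$ and $J'_l(\F_2)[2] = 0$, the group $J'_l(\F_2)$ has odd order, and the injection $J'_l(\Q)_\tors \hookrightarrow J'_l(\F_2)$ shows that every rational torsion point has odd order and is therefore infinitely $2$-divisible in~$J'_l(\Q_2)$. A short computation (given $2^n Q = P + T$ with $T$ of odd order, write $T = 2^n T_n$ with $T_n \in J'_l(\Q_2)[\odd]$ to obtain $2^n(Q - T_n) = P$, and note $\pi_2(T_n) = 0$ since $T_n$ is even divisible by $2$) shows $q(P + T) = q(P)$ for every such $T$. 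Hence
\[
  q\bigl(i_{P_0}(X) + J'_l(\Q)_\tors\bigr) = q\bigl(i_{P_0}(X)\bigr)
  \quad\text{and}\quad
  \pi_2\bigl(J'_l(\Q)_\tors\bigr) = \{0\},
\]
so the second hypothesis of Proposition~\ref{P:key} collapses to $\mu_2\bigl(q(i_{P_0}(X))\bigr) \cap r(\Sel_2 J'_l) \subset \{1\}$. The first hypothesis is automatic: by assumption~(1), $S \to L_2^\square$ is injective, and since $\Sel_2 J'_l \subset S$, we obtain $\ker \sigma = 0$.

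Next I would verify the simplified second hypothesis disk by disk. For $D_\infty$, Lemma~\ref{L:im1GFE}(1) gives $\mu_2(q(i_\infty(D_\infty))) = \{1,\; 1+\lambda^{2l-1}\}$, and assumption~(2) excludes the nontrivial element $1 + \lambda^{2l-1} \in Z'$ from $r(S) \supset r(\Sel_2 J'_l)$. For $D_{(1,1)}$, Lemma~\ref{L:im2GFE} gives $\mu_2(q(i_{(1,1)}(D_{(1,1)}))) = \{1,\; 1 + \lambda^{l+2}/(1+\lambda^2),\; \sigma,\; \sigma'\}$, whose three nontrivial elements are exactly the remaining members of $Z'$. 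For $D_{(1,-1)}$, the hyperelliptic involution $\iota$ exchanges $(1,1)$ with $(1,-1)$ and acts as $-1$ on $J'_l$, so $i_{(1,-1)}(P) = -i_{(1,1)}(\iota(P))$; because $q(-R) = q(R)$ in $J'_l(\Q_2)/2J'_l(\Q_2)$ and $\iota$ carries $D_{(1,-1)}$ bijectively onto $D_{(1,1)}$, the same four classes appear and the same conclusion holds.

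Applying Proposition~\ref{P:key} three times, we obtain $D_\infty \cap C'_l(\Q) = \{\infty\}$, $D_{(1,1)} \cap C'_l(\Q) = \{(1,1)\}$, and $D_{(1,-1)} \cap C'_l(\Q) = \{(1,-1)\}$, hence $C'_l(\Q) = \{\infty, (1,1), (1,-1)\}$ as claimed. The statement on $x^5 + y^5 = z^p$ then follows immediately from the Dahmen--Siksek proposition recalled at the start of Section~\ref{S:GFE}. The genuine obstacle is not in the proof as laid out here but in the subsequent computational step: for each prime $p$ of interest, one must produce a tractable supergroup $S$ of the image of $\Sel_2 J'_p$ in $L^\square$, check injectivity of $r|_S$, and verify that no element of $r(S)$ coincides with any of the four explicit classes in $Z'$; this is where practical obstacles (such as computing the class group of $\Q(2^{1/p})$ under GRH) are likely to appear.
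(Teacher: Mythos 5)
Your proof is correct and follows essentially the same route as the paper: both reduce the claim to Lemmas~\ref{L:im1GFE} and~\ref{L:im2GFE} combined with Proposition~\ref{P:key} (the paper phrases this as an invocation of Algorithm~\ref{Algo} via Proposition~\ref{P:correct}, which unwinds to exactly your three applications of the key proposition). Your explicit handling of the odd-order torsion, your treatment of $D_{(1,-1)}$ via the hyperelliptic involution, and your choice to apply Proposition~\ref{P:key} to the full residue disks (which lets you bypass the paper's parenthetical remark about the $2$-Selmer set of the curve needed when the algorithm subdivides $D_{(1,1)}$) are all sound.
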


\begin{proof}
  Note that Lemmas \ref{L:im1GFE} and~\ref{L:im2GFE} imply that
  $Z' \cup \{1\}$ is the union of the sets~$Y$ occurring in Algorithm~\ref{Algo}
  when it is applied to the curve~$C'_l$,
  so the assumptions imply that the algorithm will not return {\sf FAIL}.
  (There cannot be any elements in $\Sel_2 C$ other than the images of
  the known points, since this would lead to a non-trivial intersection
  of~$Z'$ with the image of~$S$.)
  The set returned by the algorithm can contain at most one point in each
  $2$-adic residue disk. Since there are only three such disks, the known
  points must account for all rational points on~$C'_l$.
\end{proof}

Computing $\sigma$ and~$\sigma'$ for many values of~$l$,
it appears that their images in~$L_2^\square$ are represented uniformly by an infinite product
\[ (1 + \lambda^{l+2}) (1 + \lambda^{l+6}) (1 + \lambda^{l+8}) (1 + \lambda^{l+10})
               (1 + \lambda^{l+14}) (1 + \lambda^{l+18}) (1 + \lambda^{l+22}) \cdots ,
\]
but it is not obvious which rule is behind the sequence $(2, 6, 8, 10, 14, 18, 22, \ldots)$.
However, extending it further and consulting the OEIS~\cite{OEIS}
gives exactly one hit, namely A036554, the sequence of `numbers $n$ whose binary
representation ends in an odd number of zeros', i.e., such that $v_2(n)$ is odd.
So we propose the following.

\begin{conjecture}
  $\mu_2(\sigma)$ (and also $\mu_2(\sigma')$) is represented by
  \[ \prod_{n \ge 1, 2 \nmid v_2(n)} (1 + \lambda^{l+n})
      \sim 1 + \frac{\lambda^l}{\prod_{k \ge 1} (1 + \lambda^{2^k})} .
  \]
\end{conjecture}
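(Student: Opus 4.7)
The plan is to compute $\sigma$ directly using the halving procedure of Section~\ref{S:halving} and to match the result against the proposed formula. A first simplification reduces the target: the telescoping identity $(1-\lambda)\prod_{k=0}^{N}(1+\lambda^{2^k}) = 1-\lambda^{2^{N+1}}$ converges in $L_2^\times$ (since $v_2(\lambda^{2^N}) = 2^N/l \to \infty$) to give $\prod_{k \ge 0}(1+\lambda^{2^k}) = (1-\lambda)^{-1}$, hence $\prod_{k \ge 1}(1+\lambda^{2^k}) = (1-\lambda^2)^{-1}$, and the conjecture collapses to
\[ \mu_2(\sigma) \;\sim\; 1 + \lambda^l(1-\lambda^2) \;=\; 1 + \lambda^l - \lambda^{l+2} \qquad \text{in } L_2^\square. \]
The equivalence with $\prod_{n \ge 1,\,2\nmid v_2(n)}(1+\lambda^{l+n})$ modulo squares then follows by iterating the relation $(1+\lambda^m)^2 \sim (1+\lambda^{2m})(1+\lambda^{m+l})$ already used in Section~\ref{S:FLT}, which trades each factor at an even-$v_2$ index for one at strictly higher $\lambda$-valuation with odd~$v_2$.

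To perform the halving, let $P_+ = [(5,\eta)-(1,1)]$ with $5\eta^2 = 4\cdot 5^l+1$; its Mumford representation is $(a,b) = ((x-1)(x-5),\, b)$ with $b$ the linear interpolant satisfying $b(1)=1$, $b(5)=\eta$. By Corollary~\ref{C:halve12}(2), fix $s \in L_2$ with $s(\theta)^2 = (\theta-1)(\theta-5) = \lambda^{-4}(1+\lambda^2)(1+5\lambda^2)$. Using $(1+\lambda^2)(1+5\lambda^2) = (1+\lambda^2)^2\bigl(1 + 4\lambda^2/(1+\lambda^2)\bigr)$, Hensel's lemma supplies $s(\theta) = \pm\lambda^{-2}(1+\lambda^2)\sqrt{1+4\lambda^2/(1+\lambda^2)}$ as a convergent $\lambda$-series. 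One then finds the monic $w \in \Q_2[x]$ of smallest degree such that the degree-$<l$ reduction $v(x) \equiv w(x)s(x) \pmod{f(x)}$ satisfies $\deg v \le g+1$ and $\eta\,v(1)+v(5)=0$; a dimension count makes $\deg w = g$ the generic minimum, and Corollary~\ref{C:halve12}(2) gives $\mu_2(\sigma) \sim w(\theta)$ in $L_2^\square$. Substituting $\theta=-\lambda^{-2}$ and using that $\lambda^{1-l}=(\lambda^{-g})^2$ is a square, one gets $w(\theta) \sim (-1)^g W(\lambda)$ with $W(\lambda) \in \Q_2[\lambda^2]$ of degree $l-1$.

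The crux is then to identify $(-1)^g W(\lambda) \sim 1+\lambda^l-\lambda^{l+2}$ in $L_2^\square$ uniformly in~$l$. Since $\deg W$ grows with $l$, a term-by-term verification is unavailable and a structural argument is required. Two routes are natural: either recognize $w$ as a Pad\'e-type approximant to the $\lambda$-series $s(\theta)$ and analyze its coefficients inductively, or work in the formal group by computing $\tfrac12 \log_J(P_+)$ and then recovering $Q$ in Mumford form via $\exp_J$ (unambiguous since $J'_l(\Q_2)[2]=0$). In either case the distinguished role of indices~$n$ with $v_2(n)$ odd should trace back to the 2-adic valuation pattern of the binomial coefficients $\binom{1/2}{n}$, exactly the mechanism used in the proof of Lemma~\ref{L:FLT_Z}. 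The main obstacle is producing this structural control over $W(\lambda)$ independently of~$l$; the claim $\sigma=\sigma'$ amounts, in parallel, to showing $P_+ + P_- \in 4J'_l(\Q_2)$ with $P_- = i_{(1,1)}(\varphi_{(1,1)}(-4))$, which should follow from the refined divisibility estimates of Lemma~\ref{L:2iphit=iphi2t}.
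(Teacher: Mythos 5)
First, be aware that the statement you are trying to prove is stated in the paper as a \emph{conjecture}: the author supports it only by computing $\sigma$ and~$\sigma'$ for many values of~$l$ and matching the resulting exponent pattern against OEIS sequence A036554. There is no proof in the paper to compare yours against. Your proposal does contain one genuinely useful and correct observation: the telescoping identity $\prod_{k \ge 0}(1+\lambda^{2^k}) = (1-\lambda)^{-1}$ (valid in~$L_2$ since $v(\lambda^{2^N}) \to \infty$) reduces the conjectured class to the closed form $1 + \lambda^l - \lambda^{l+2}$, and your setup of the halving computation via Corollary~\ref{C:halve12}(2) --- with $s(\theta)^2 \equiv (\theta-1)(\theta-5) = \lambda^{-4}(1+\lambda^2)(1+5\lambda^2)$ and a Hensel expansion of the square root --- is the right starting point and parallels the computation in the proof of Lemma~\ref{L:FLT_Z}.

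However, the proposal is not a proof. The entire difficulty of the conjecture is concentrated in the step you yourself label ``the crux'': identifying the class of the degree-$g$ polynomial $w$ in~$L_2^\square$ uniformly in~$l$. In the proof of Lemma~\ref{L:FLT_Z} this identification succeeds because the relevant series is $(1-4x)^{-1/2}$, whose coefficients have $2$-adic valuation exactly~$1$ precisely when the index is a power of~$2$; for the twisted curve $C'_l$ the analogous series is $\bigl(1+4\lambda^2/(1+\lambda^2)\bigr)^{\pm 1/2}$, whose argument is no longer a monomial in~$\lambda$, and no corresponding valuation pattern is established --- in particular, nothing you cite explains why the set of~$n$ with $v_2(n)$ odd should appear. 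Neither of the two routes you sketch (Pad\'e-type analysis of~$w$, or formal-group $\log$/$\exp$) is carried out, and you explicitly acknowledge this as ``the main obstacle''. Likewise, your reduction of $\sigma = \sigma'$ to $P_+ + P_- \in 4 J'_l(\Q_2)$ is exactly the statement the paper reports being unable to verify (the difference is only known to lie in~$K_3$, hence divisible by~$2$ but not provably by~$4$), so invoking Lemma~\ref{L:2iphit=iphi2t} does not settle it. In short: the target is correctly simplified and the computational framework is sound, but the statement remains unproved, both in your write-up and in the paper.
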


We give a more concrete version of the criterion, following the considerations
of Remark~\ref{R:no2adic}.

\begin{corollary} \label{C:hyp2b}
  Assume that $l$ is prime and that $l^2 \nmid 2^{l-1} - 1$.
  Then a possible choice of the subgroup~$S$ in Proposition~\ref{P:hyp2a} is
  the subgroup of~$L(\{5\}, 2)$ consisting of elements mapping into the image
  of~$J'_l(\Q_5)$ in~$L_5^\square$.
  In fact, the resulting criterion is equivalent to what would be obtained
  by taking $S$ to be the image of~$\Sel_2 J'_l$.
\end{corollary}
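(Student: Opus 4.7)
The plan is to mirror the reasoning from the FLT proof of Section~\ref{S:FLT} and then invoke the ``$2$-adic-free descent'' principle of Remark~\ref{R:no2adic}. I would (i) pin down the bad primes for $2$-descent on~$J'_l$, (ii) verify $\Sel_2 J'_l \subset S$, and (iii) show the algorithm returns the same verdict for $S$ as for $\Sel_2 J'_l$.

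\emph{Bad primes.} The curve $C'_l$ admits the integral model $Y^2 = 20\,x^l + 5$, whose discriminant involves at most $2$, $5$, and~$l$. At~$l$ the same computation as in Section~\ref{S:FLT} applies: shifting $x \mapsto x - 1/4$ transforms $4x^l+1$ into a polynomial with constant term $(2^{l-1}-1)(2^{l-1}+1)/4^{l-1}$. Fermat's little theorem gives $l \mid 2^{l-1}-1$, and the hypothesis $l^2 \nmid 2^{l-1}-1$ pins the $l$-adic valuation of this constant term at exactly~$1$. By~\cite{Stoll2001}*{Lemma~4.5} (equivalently~\cite{SchaeferStoll2004}*{Proposition~3.2}) this Eisenstein-type condition makes $C'_l$ regular over~$\Z_l$ with trivial component group at~$l$, so $l \notin \Sigma$ and hence $\Sigma \subset \{2, 5\}$.

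\emph{Inclusion $\Sel_2 J'_l \subset S$.} The image of $\Sel_2 J'_l$ in~$L^\square$ therefore lies in $L(\{2,5\}, 2)$. Since $L = \Q(2^{1/l})$ is totally ramified at~$2$ with unique prime~$\lambda$ of residue degree~$1$, every $\xi \in L(\{2,5\},2)$ satisfies $v_2(N_{L/\Q}(\xi)) = v_\lambda(\xi)$; the Selmer norm condition $N_{L/\Q}(\xi) \in \Q^{\times 2}$ thus forces $v_\lambda(\xi)$ to be even, placing $\xi$ (modulo squares) in~$L(\{5\}, 2)$. The local condition at~$5$ is by definition the condition imposed on~$S$, so $\Sel_2 J'_l \subset S$ and $S$ is a legitimate choice in Proposition~\ref{P:hyp2a}.

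\emph{Equivalence.} The Selmer-dependent steps of Algorithm~\ref{Algo} are Steps~\ref{algo:inj}, \ref{algo:imX}a and~\ref{algo:imX}d; the first and third depend on $S$ only through $\ker r \cap S$ and $r(S) \cap Y$, while \ref{algo:imX}a uses only the Selmer set $\Sel_2 C$ and is untouched. The key claim I would establish is: \emph{any $\xi \in S$ with $r(\xi) \in \im(\mu_2)$ already lies in $\Sel_2 J'_l$.} For such $\xi$, the inclusion $\im(\mu_2) \subset \ker(N_{L_2/\Q_2})$ makes $N_{L/\Q}(\xi)$ a square in~$\Q_2^\times$, while the $5$-adic Selmer condition forces $N_{L/\Q}(\xi) \in \ker\!\bigl(\Q(\{5\},2) \to \Q_5^\square\bigr) = \langle -1 \rangle$ (note $-1 \equiv 4 \pmod 5$ is a $5$-adic square while $5$ is not). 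The hypothesis $5 \not\equiv \pm 1 \pmod 8$ then makes $-1$ and $5$ independent in $\Q_2^\square$, so the map $\Q(\{5\},2) \hookrightarrow \Q_2^\square$ is injective, forcing $N_{L/\Q}(\xi) = 1$ in $\Q^{\times}/\Q^{\times 2}$. The archimedean condition is then automatic, since $L$ has a unique real place and $N_{L/\Q}(\xi) > 0$ makes $\xi$ positive there. This proves $\ker r \cap S = \ker r \cap \Sel_2 J'_l$ and $r(S) \cap Y = r(\Sel_2 J'_l) \cap Y$, so the two variants of the criterion return {\sf FAIL} on the same inputs and otherwise both return~$C'_l(\Q)_{\text{known}}$. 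The main obstacle I anticipate is precisely this interlocking of the norm, $2$-adic, $5$-adic and archimedean conditions; once the key injectivity $\Q(\{5\},2) \hookrightarrow \Q_2^\square$ is recorded, the rest is bookkeeping.
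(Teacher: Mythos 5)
Your proof is correct and follows essentially the same route as the paper: the condition $l^2 \nmid 2^{l-1}-1$ gives trivial component group at~$l$ so that $\Sigma = \{2,5\}$, total ramification of~$2$ in the odd-degree field~$L$ plus the norm condition reduces $L(\{2,5\},2)$ to $L(\{5\},2)$, and the equivalence is the mechanism of Remark~\ref{R:no2adic}. The only difference is that you spell out what the paper delegates to that remark --- in particular you explicitly check that the norm and archimedean Selmer conditions are recovered from $r(\xi) \in \im(\mu_2)$ together with $5 \not\equiv \pm 1 \bmod 8$ and the uniqueness of the real place of~$L$ --- which is a welcome filling-in of detail rather than a deviation.
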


\begin{proof}
  As in the case discussed in the preceding section, the assumption $l^2 \nmid 2^{l-1} - 1$
  implies that the Tamagawa number at~$l$ is~$1$, so that we can reduce to $\Sigma = \{2,5\}$.
  Furthermore, since $2$ is totally ramified in~$L$ and $L$ has odd degree,
  the norm of any element~$\alpha \in L^\times$ whose valuation with respect to
  the prime above~$2$ is odd will have odd $2$-adic valuation and cannot be a square.
  This lets us reduce to~$L(\{5\}, 2)$. Remark~\ref{R:no2adic} now shows that
  using~$S$ is equivalent to using~$\Sel_2 J'_l$ in the algorithm.
\end{proof}

We note that it is fairly easy to find~$S$, given $L(\{5\}, 2)$, since
the image of~$J'_l(\Q_5)$ in~$L_5^\square$ equals the
image of~$J'_l(\Q_5)[2]$, unless there are elements of order~$4$ in~$J'_l(\Q_5)$.
We can easily exclude this by checking that the images of an $\F_2$-basis
of~$J'_l(\Q_5)[2]$ are independent.

We carried out the computations necessary to test the criterion
of Proposition~\ref{P:hyp2a} in the version of Corollary~\ref{C:hyp2b}.
This results in the following.

\begin{theorem}
  For $7 \le p \le 53$ prime, we have (assuming GRH when $p \ge 23$)
  \[ C'_p(\Q) = \{\infty, (1, 1), (1, -1)\} . \]

  In particular, the generalized Fermat equation $x^5 + y^5 = z^p$ has only
  the trivial coprime integral solutions.
\end{theorem}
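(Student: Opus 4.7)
The plan is to apply Proposition~\ref{P:hyp2a}, in the concrete form of Corollary~\ref{C:hyp2b}, separately for each prime~$p$ in the stated range; the statement about $x^5+y^5=z^p$ then follows immediately from the Dahmen--Siksek reduction recalled at the beginning of this section.

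For each $p \in \{7,11,13,\ldots,53\}$, I would carry out the following steps. First, verify the Wieferich condition $p^2 \nmid 2^{p-1}-1$; this is a trivial direct check, and holds since the only known Wieferich primes $1093$ and $3511$ lie outside the range. Second, compute the finite $\F_2$-vector space $L(\{5\},2)\subset L^\square$, where $L=\Q(2^{1/p})$: this amounts to finding generators for $\calO_L^\times$ modulo $\pm (\calO_L^\times)^2$, representatives for the $2$-torsion of the class group (which, as already noted at the end of Section~\ref{S:FLT}, turns out to be trivial throughout the range), and generators for the primes of~$L$ above~$5$. For $p\ge 23$ we invoke GRH to replace the Minkowski bound by a Bach bound, making the class-group computation feasible. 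Third, cut down to the subgroup $S\subset L(\{5\},2)$ corresponding to the $5$-adic Selmer condition: find an $\F_2$-basis of $J'_p(\Q_5)[2]$, check that its image in~$L_5^\square$ is linearly independent (so that $\mu_5(J'_p(\Q_5))=\mu_5(J'_p(\Q_5)[2])$), and intersect.

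Fourth, assemble the set $Z'\subset L_2^\square$ of Proposition~\ref{P:hyp2a}. The elements $1+\lambda^{2l-1}$ and $1+\lambda^{l+2}/(1+\lambda^2)$ are given by the explicit formulas of Lemmas~\ref{L:im1GFE} and~\ref{L:im2GFE}, while $\sigma$ and~$\sigma'$ must be obtained by halving the two points $\varphi_{(1,1)}(\pm 4)\in J'_p(\Q_2)$ using the algorithm of Section~\ref{S:halving} and then applying $\mu_2$ via Corollary~\ref{C:halve12}. Finally, verify both hypotheses of Proposition~\ref{P:hyp2a}: that the map $S\to L_2^\square$ is injective, and that its image is disjoint from~$Z'$. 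Both verifications reduce to linear algebra over~$\F_2$ once a basis of $S$ has been pushed forward to $L_2^\square=\Q_2(\lambda)^\square$ to sufficient $\lambda$-adic precision (the valuations at which classes become distinguishable being bounded explicitly by~$2l$). Assuming both hold, Proposition~\ref{P:hyp2a} gives $C'_p(\Q)=\{\infty,(1,1),(1,-1)\}$, and the Dahmen--Siksek result converts this into the statement on $x^5+y^5=z^p$.

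The main obstacle is the class-group computation in $\Q(2^{1/p})$ as $p$ grows, since both the degree and the discriminant grow rapidly with $p$; this is precisely what forces the GRH hypothesis for $p\ge 23$. A secondary concern is controlling the $2$-adic precision required to evaluate $\sigma,\sigma'$ and to certify injectivity and disjointness in $L_2^\square$ --- in principle an unlucky configuration could demand very high precision --- but in practice the halving algorithm of Section~\ref{S:halving} terminates quickly on the points at hand and the required images stabilize at modest precision. All computations will be carried out in Magma.
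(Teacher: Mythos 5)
Your proposal is correct and follows essentially the same route as the paper: the paper's proof of this theorem consists precisely of the statement that the computations testing the criterion of Proposition~\ref{P:hyp2a}, in the version of Corollary~\ref{C:hyp2b}, were carried out in Magma for the primes in the stated range, with the Dahmen--Siksek reduction supplying the conclusion about $x^5+y^5=z^p$. Your expanded description of what those computations entail (Wieferich check, computing $L(\{5\},2)$ and the $5$-adic Selmer condition, assembling $Z'$ via the halving algorithm, and the two $\F_2$-linear verifications) is an accurate unpacking of that same argument.
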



\section{An `elliptic Chabauty' example} \label{S:ellchab}

In this section, we apply our approach to `Elliptic Curve Chabauty'.
The curve in the following result comes up in the course of trying to
find all primitive integral solutions to the Generalized Fermat Equation
$x^2 + y^3 = z^{25}$. It is a hyperelliptic curve over~$\Q$ of genus~$4$;
it can be shown that the Mordell-Weil group of its Jacobian has rank~$4$
(generators of a finite-index subgroup can be found), so that Chabauty's
method does not apply directly to the curve.

\begin{theorem}
  Let $C$ be the smooth projective curve given by the affine equation
  \[ y^2 = 81 x^{10} + 420 x^9 + 1380 x^8 + 1860 x^7 + 3060 x^6 - 66 x^5
            + 3240 x^4 - 1740 x^3 + 1320 x^2 - 480 x + 69 .
  \]
  If GRH holds, then $C(\Q)$ consists of the two points at infinity only.
\end{theorem}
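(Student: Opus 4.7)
Since the rank of $J(\Q)$ equals the genus $g=4$, ordinary Chabauty--Coleman does not apply directly to $C$, and the strategy is to use the Elliptic Curve Chabauty variant of the method, namely to apply Proposition~\ref{P:key-general} in the form described at the end of Section~\ref{S:key}, with $A$ a Weil restriction of an elliptic curve. The first task is therefore to exhibit a non-constant $K$-morphism $\phi\colon C\to E$, where $K$ is some number field of small degree and $E$ is an elliptic curve over $K$. The natural source of such a morphism is a factorization $f(x)=c\,g(x)h(x)$ of the degree-$10$ polynomial on the right-hand side over $K$ with $\deg g=4$; after an appropriate twist this presents $C_K$ as a double cover of the elliptic curve $E\colon v^2=c\,g(x)$, and every rational point $P\in C(\Q)$ then maps to a $K$-point of $E$ whose $x$-coordinate $x(P)\in\PP^1(\Q)$ is rational.

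The next step is to set up the Chabauty data over $K$: take $A=\mathrm{Res}_{K/\Q}E$, $p=2$, and let $\Gamma\subset E(K)$ be the subgroup generated by the image under $\phi$ of the two points at infinity together with $E(K)_{\tors}$. One then computes a finite group $S\supset \Sel_2 E/K$ by standard $2$-descent on $E$ over $K$, following the general framework reformulated in Remark~\ref{R:isogeny}. The two hypotheses of Proposition~\ref{P:key-general} must then be verified. Condition~(\ref{key-gen-1}) is a finite linear-algebra check that $\ker\sigma$ is contained in $\delta(\pi(\Gamma))$, obtained by comparing the images of $\Sel_2 E/K$ and of $\Gamma$ in $E(K\otimes\Q_2)/2E(K\otimes\Q_2)$. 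Condition~(\ref{key-gen-2}) is verified disk by disk: one partitions $C(\Q_2)$ into a finite collection of residue disks, and for each disk $X$ not containing a known rational point one shows that $q\bigl(\phi(X)+\Gamma\bigr)\cap\im\sigma\subset\pi_2(\Gamma)$, using the procedure of Section~\ref{S:compute_q} adapted to $E$ (where division by $2$ is much simpler than on the Jacobian, being a single quartic equation given by a division polynomial).

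The main practical obstacle is twofold. First, the recursive computation of $q(\phi(X)+\Gamma)$ near the two points at infinity must terminate, which via Lemma~\ref{L:qconstnear0gamma} requires an explicit $m$ with $2m-3\ge n_{m,\Gamma}$; if the curve $\phi(C)$ is tangent to $\cl(\Gamma)$ at the basepoint to high order, $n_{m,\Gamma}$ could grow faster than $m$ and obstruct the argument. Second, one needs the $2$-Selmer computation over $K$ to be feasible and to satisfy the injectivity condition~(\ref{key-gen-1}); the class group and unit group of $K$ (and of the quadratic extensions showing up in the descent map) are exactly the data whose computation is unconditional only modulo GRH, which is the source of the GRH assumption in the statement. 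Assuming these steps all succeed, Proposition~\ref{P:key-general} gives $\phi\bigl(C(\Q)\cap X\bigr)\subset\overline{\Gamma}$ for every residue disk $X$, and combining with the fact that each residue disk contains at most one torsion-translate point one concludes $C(\Q)=\{\infty_+,\infty_-\}$.
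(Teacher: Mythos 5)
Your overall strategy---pass to an elliptic curve $E$ over a degree-$10$ field $K$ via a quartic factor of $f$, take $A=\mathrm{Res}_{K/\Q}E$ with $p=2$, and run the $q$-map machinery of Sections~\ref{S:key} and~\ref{S:compute_q} on residue disks---is indeed the route the paper takes, and your identification of the two practical obstacles (termination of the recursion near the basepoint, and injectivity of the Selmer group into the local quotient, with GRH entering through the class/unit group computations for $K$ and its quadratic extension) matches the paper. But there is a genuine gap in how you propose to dispose of the residue disks that contain no known rational point. For such a disk $X$, verifying condition~(2) of Proposition~\ref{P:key-general} and invoking the proposition only yields $i\bigl(C(\Q)\cap X\bigr)\subset\bar{\Gamma}$, i.e.\ \emph{at most one} rational point per disk (or more, if $\Gamma$ is not torsion, in which case your final sentence about ``at most one torsion-translate point per disk'' does not apply and you would need the Chabauty--Coleman step of Remark~\ref{R:chab}). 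Since $C(\Q_2)$ has many residue disks, this bounds $\#C(\Q)$ by the number of disks, which is nowhere near the claimed conclusion. What is needed for empty disks is the \emph{other} half of the algorithm (Step~\ref{algo-g:check}a of Algorithm~\ref{Algo-general}): one must show that the image of $X$ in the local quotient misses the image of the Selmer \emph{set} of the curve. The paper does this up front by computing the fake $2$-Selmer set of $C$ (needing local data only at $2$, $3$, $29$), finding it to be a one-element set, and then using \cite{Stoll2001}*{Lemma~6.3} to pin every rational point into the two half residue disks at infinity with $v_2(x)\le -3$; the hyperelliptic involution reduces this to a single half-disk containing $\infty_9$. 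Only after this reduction is the elliptic-curve step applied, and then only to that one disk. Your plan omits this Selmer-set computation entirely, and without it the argument cannot close.

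Two smaller points. First, the choice of $K$ matters: there are three non-conjugate degree-$10$ subfields of the splitting field admitting a quartic factor, and the condition that $q$ of the disk meets the image of the Selmer group only in the image of the global torsion holds for exactly one of them, so ``some number field of small degree'' must be made specific. Second, in the paper's setup the basepoint $\infty_9$ maps to the origin of $E$ and the relevant subgroup is just the torsion (together with the $2$-torsion point $T$, handled by noting $i(P)+T$ is not divisible by $2$), so Proposition~\ref{P:key} applies directly with $\Gamma$ torsion; your larger $\Gamma$ generated by the images of both points at infinity is unnecessary once the involution has been used to discard the second half-disk, and if it were genuinely of positive rank it would force the extra Chabauty--Coleman step mentioned above.
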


\begin{proof}
  As a first step, we compute the fake $2$-Selmer set as in~\cite{BruinStoll2009}.
  We obtain a one-element set (this requires local information only at
  the primes $2$, $3$ and~$29$). Using~\cite{Stoll2001}*{Lemma~6.3}, we then
  show that the points in~$C(\Q_2)$ whose image in~$L_2^\square/\Q_2^\square$
  is the image of the unique element of the fake $2$-Selmer set are those whose
  $x$-coordinate has $2$-adic valuation~$\le -3$. This set is the union of
  two half residue disks (the maximal residue disks contain the points~$P$
  such that $v_2(x(P)) \le -2$) that are mapped to each other by the hyperelliptic
  involution, so it is sufficient to consider just one of them, say the disk
  that contains $P_0 = \infty_9$, the point at infinity such that $(y/x^5)(P_0) = 9$.

  The splitting field of the polynomial~$f$ on the right hand side of the curve
  equation contains three pairwise non-conjugate subfields~$k$ of degree~$10$
  over which $f$ is divisible by a monic polynomial $g \in k[x]$ of degree~$4$.
  If $P$ is any rational point on~$C$, then it follows that $g(x(P))$ is a
  square in~$k$ (this is because the image of~$P$ in the fake $2$-Selmer set
  is the same as that of~$P_0$), so we obtain a point $(\xi, \eta) \in H(k)$
  with $\xi \in \Q$ (or $\xi = \infty$) where $H$ is the smooth projective
  curve given by $y^2 = g(x)$. We can parameterize the image of the residue disk
  around~$P_0$ by a pair of Laurent series $\bigl((2t)^{-1}, \sqrt{g((2t)^{-1})}\bigr)$
  (where we can take the square root to have leading term $t^{-2}/4$).
  Since $H(k)$ contains the two points at infinity, $H$ is isomorphic to
  an elliptic curve~$E$ over~$k$; we take the isomorphism so that it
  sends $\infty_1 \in H(k)$ to the origin of~$E$. We then obtain a Laurent
  series~$\xi(t) \in k\lrs{t}$ that gives the $x$-coordinate of the image on~$E$
  of the point whose parameter is~$t$.

  For the following, we take $k$ to be the field generated by a root of
  \[ x^{10} + 75 x^6 - 50 x^5 + 100 x^3 + 625 x^2 + 1250 x + 645; \]
  the polynomial~$g$ and the curves $H$ and~$E$ are taken with respect to this field.
  We next compute the $2$-Selmer group of~$E$ over~$k$. There is exactly one
  point of order~$2$ in~$E(k)$, which means that we have to work with a quadratic
  extension of~$k$. This is where we use GRH, which allows us to find the relevant
  arithmetic information for this field of degree~$20$ faster. The Selmer group
  has $\F_2$-dimension~$6$ (so the bound for the rank of~$E(k)$ is~$5$).
  We check that it injects into~$E(k_2)/2 E(k_2)$, where $k_2 = k \otimes_{\Q} \Q_2$;
  note that this splits as a product of two extensions of~$\Q_2$, both of ramification
  index~$2$ and one of residue class degree~$1$, the other of residue class degree~$4$.

  In the context of our method, we consider the curve that is the (desingularization of) the
  curve over~$\Q$ in~$A = R_{k/\Q} E$ (the latter denotes the Weil restriction of scalars)
  that corresponds to the set of points on~$H$ whose $x$-coordinate is rational.
  We have $E(k_2) \cong A(\Q_2)$, so we can use arithmetic on~$E$ over~$k$ and its
  completions for the computations. We check that $n_\tors = 1$ (the map from $E(k_2)[2]$
  to $E(k_2)/2E(k_2)$ is injective). By Lemma~\ref{L:qconstnear0}, a suitable
  value of~$m$ is $m = 4$, provided $5 \ge n_4$ in the notation of the lemma.
  Note that in this situation halving points is easy, since doubling a point corresponds
  to an explicit map of degree~$4$ on the $x$-coordinate.
  If $P$ is in our half residue disk, then $i(P) + T$ (where $T$ is the point of order~$2$
  in $E(k) = A(\Q)$) is not divisible by~$2$, and its image in $A(\Q_2)/2A(\Q_2)$
  is the same as that of~$T$. So we only have to determine $q(i(P))$ for a suitable
  selection of points~$P$ in our disk. We write $P_\tau$ for the point corresponding
  to the parameter $\tau \in 2 \Z_2$.

  We compute $Y_\tau = q(i(P_\tau))$ for $\tau \in \{\pm 4, \pm 8, \pm 16\}$.
  Note that this can be done solely in terms of the $x$-coordinate~$\xi(\tau)$.
  We find that $\nu(i(P_\tau)) = v_2(\tau) - 1$ for these values and that
  $Y_{-\tau} = Y_\tau$. This implies that $n_m = m - 1$ for $2 \le m \le 4$
  and that $q$ of the disk in question is the union $Y_4 \cup Y_8 \cup Y_{16}$.
  In fact, it turns out that this union is equal to~$Y_8$, and we verify that
  $Y_8$ meets the image of the $2$-Selmer group only in the image of the global torsion.
  By Theorem~\ref{T:key}, this then implies that there can be no other point
  than~$P_0$ in our (half) residue disk. The claim follows.
\end{proof}

We note that the two other possible choices of~$k$ also lead to elliptic curves
with a $2$-Selmer rank of~$5$ (this is unconditional for one of the choices,
where the curve~$E$ happens to have full $2$-torsion over~$k$), but for these
other fields, the condition that the image of the disk under~$q$ meets the image
of the Selmer group only in the image of the global torsion is not satisfied.
We also remark that we have been unable to find five independent points in~$E(k)$
(for any of the three possible choices of $k$ and~$E$),
so that we could not apply the standard Elliptic Curve Chabauty method.

Another application of our `Selmer group Chabauty' approach in the setting
of Elliptic Curve Chabauty was made in~\cite{FreitasNaskreckiStoll}.
We use this to show that there are no unexpected points on the elliptic
curve~$X_0(11)$ defined over certain number fields of degree~$12$ and such
that the image under the $j$-map is in~$\Q$. This is a vital step in the proof
that the only nontrivial primitive integral solutions of the Generalized
Fermat Equation $x^2 + y^3 = z^{11}$ are $(x,y,z) = (\pm 3, -2, 1)$.
The situation is similar to what happens for the example presented here:
we can compute the $2$-Selmer group of~$X_0(11)$ over the fields of interest,
but we are unable to produce enough independent points to meet the upper
bound on the rank, so we cannot apply the standard method.


\begin{bibdiv}
\begin{biblist}

\bib{BalakrishnanBradshawKedlaya}{article}{
   author={Balakrishnan, Jennifer S.},
   author={Bradshaw, Robert W.},
   author={Kedlaya, Kiran S.},
   title={Explicit Coleman integration for hyperelliptic curves},
   conference={
      title={Algorithmic number theory},
   },
   book={
      series={Lecture Notes in Comput. Sci.},
      volume={6197},
      publisher={Springer, Berlin},
   },
   date={2010},
   pages={16--31},
   review={\MR{2721410 (2012b:14048)}},
   doi={10.1007/978-3-642-14518-6\_6},
}

\bib{BhargavaGross2013}{article}{
   author={Bhargava, Manjul},
   author={Gross, Benedict H.},
   title={The average size of the 2-Selmer group of Jacobians of
   hyperelliptic curves having a rational Weierstrass point},
   conference={
      title={Automorphic representations and $L$-functions},
   },
   book={
      series={Tata Inst. Fundam. Res. Stud. Math.},
      volume={22},
      publisher={Tata Inst. Fund. Res., Mumbai},
   },
   date={2013},
   pages={23--91},
   review={\MR{3156850}},
}

\bib{Magma}{article}{
   author={Bosma, Wieb},
   author={Cannon, John},
   author={Playoust, Catherine},
   title={The Magma algebra system. I. The user language},
   note={Computational algebra and number theory (London, 1993)},
   journal={J. Symbolic Comput.},
   volume={24},
   date={1997},
   number={3-4},
   pages={235--265},
   issn={0747-7171},
   review={\MR{1484478}},
   doi={10.1006/jsco.1996.0125},
}

\bib{BruinFlynn2006}{article}{
   author={Bruin, N.},
   author={Flynn, E. V.},
   title={Exhibiting SHA[2] on hyperelliptic Jacobians},
   journal={J. Number Theory},
   volume={118},
   date={2006},
   number={2},
   pages={266--291},
   issn={0022-314X},
   review={\MR{2225283 (2006m:11091)}},
   doi={10.1016/j.jnt.2005.10.007},
}

\bib{BruinStoll2009}{article}{
   author={Bruin, Nils},
   author={Stoll, Michael},
   title={Two-cover descent on hyperelliptic curves},
   journal={Math. Comp.},
   volume={78},
   date={2009},
   number={268},
   pages={2347--2370},
   issn={0025-5718},
   review={\MR{2521292 (2010e:11059)}},
   doi={10.1090/S0025-5718-09-02255-8},
}

\bib{BruinStoll2010}{article}{
   author={Bruin, Nils},
   author={Stoll, Michael},
   title={The Mordell-Weil sieve: proving non-existence of rational points
   on curves},
   journal={LMS J. Comput. Math.},
   volume={13},
   date={2010},
   pages={272--306},
   issn={1461-1570},
   review={\MR{2685127 (2011j:11118)}},
   doi={10.1112/S1461157009000187},
}

\bib{DahmenSiksek}{article}{
   author={Dahmen, Sander R.},
   author={Siksek, Samir},
   title={Perfect powers expressible as sums of two fifth or seventh powers},
   journal={Acta Arith.},
   volume={164},
   date={2014},
   number={1},
   pages={65--100},
   issn={0065-1036},
   review={\MR{3223319}},
   doi={10.4064/aa164-1-5},
}

\bib{FreitasNaskreckiStoll}{misc}{
  author={Freitas, Nuno},
  author={Naskr\k{e}cki, Bartosz},
  author={Stoll, Michael},
  title={The generalized Fermat equation with exponents 2, 3, $n$},
  date={2016-05-17},
  note={Preprint, \newline \texttt {http://www.mathe2.uni-bayreuth.de/stoll/schrift.html\#AG57}},
}

\bib{HoShankarVarma}{misc}{
  author={Ho, Wei},
  author={Shankar, Arul},
  author={Varma, Ila},
  title={Odd degree number fields with odd class number},
  date={2016-11-09},
  note={Preprint, arXiv:1603.06269},
}

\bib{McCallum1994}{article}{
   author={McCallum, William G.},
   title={On the method of Coleman and Chabauty},
   journal={Math. Ann.},
   volume={299},
   date={1994},
   number={3},
   pages={565--596},
   issn={0025-5831},
   review={\MR{1282232 (95c:11079)}},
   doi={10.1007/BF01459799},
}

\bib{OEIS}{misc}{
   label={OEIS},
   title={The On-Line Encyclopedia of Integer Sequences},
   date={2010},
   note={Published electronically at http://oeis.org},
}

\bib{PoonenStoll2014}{article}{
   author={Poonen, Bjorn},
   author={Stoll, Michael},
   title={Most odd degree hyperelliptic curves have only one rational point},
   journal={Ann. of Math. (2)},
   volume={180},
   date={2014},
   number={3},
   pages={1137--1166},
   issn={0003-486X},
   review={\MR{3245014}},
   doi={10.4007/annals.2014.180.3.7},
}

\bib{Schaefer1995}{article}{
   author={Schaefer, Edward F.},
   title={$2$-descent on the Jacobians of hyperelliptic curves},
   journal={J. Number Theory},
   volume={51},
   date={1995},
   number={2},
   pages={219--232},
   issn={0022-314X},
   review={\MR{1326746 (96c:11066)}},
   doi={10.1006/jnth.1995.1044},
}

\bib{SchaeferStoll2004}{article}{
   author={Schaefer, Edward F.},
   author={Stoll, Michael},
   title={How to do a $p$-descent on an elliptic curve},
   journal={Trans. Amer. Math. Soc.},
   volume={356},
   date={2004},
   number={3},
   pages={1209--1231},
   issn={0002-9947},
   review={\MR{2021618 (2004g:11045)}},
   doi={10.1090/S0002-9947-03-03366-X},
}

\bib{Stoll2001}{article}{
  author={Stoll, Michael},
  title={Implementing 2-descent for Jacobians of hyperelliptic curves},
  journal={Acta Arith.},
  volume={98},
  date={2001},
  number={3},
  pages={245--277},
  issn={0065-1036},
  review={\MR {1829626 (2002b:11089)}},
  doi={10.4064/aa98-3-4},
}

\bib{Stoll2006}{article}{
  author={Stoll, Michael},
  title={Independence of rational points on twists of a given curve},
  journal={Compos. Math.},
  volume={142},
  date={2006},
  number={5},
  pages={1201--1214},
  issn={0010-437X},
  review={\MR {2264661}},
}

\bib{Stoll2007}{article}{
   author={Stoll, Michael},
   title={Finite descent obstructions and rational points on curves},
   journal={Algebra Number Theory},
   volume={1},
   date={2007},
   number={4},
   pages={349--391},
   issn={1937-0652},
   review={\MR{2368954}},
   doi={10.2140/ant.2007.1.349},
}

\bib{Stollpreprint}{misc}{
  author={Stoll, Michael},
  title={Uniform bounds for the number of rational points on hyperelliptic curves
         of small Mordell-Weil rank},
  date={2015-03-15},
  note={Preprint, \texttt {arXiv:1307.1773}; to appear in J. Eur. Math. Soc.},
}

\bib{TaylorWiles}{article}{
   author={Taylor, Richard},
   author={Wiles, Andrew},
   title={Ring-theoretic properties of certain Hecke algebras},
   journal={Ann. of Math. (2)},
   volume={141},
   date={1995},
   number={3},
   pages={553--572},
   issn={0003-486X},
   review={\MR{1333036}},
   doi={10.2307/2118560},
}

\bib{VenkateshEllenberg}{article}{
   author={Venkatesh, Akshay},
   author={Ellenberg, Jordan S.},
   title={Statistics of number fields and function fields},
   conference={
      title={Proceedings of the International Congress of Mathematicians.
      Volume II},
   },
   book={
      publisher={Hindustan Book Agency, New Delhi},
   },
   date={2010},
   pages={383--402},
   review={\MR{2827801 (2012h:11160)}},
}

\bib{Wiles1995}{article}{
   author={Wiles, Andrew},
   title={Modular elliptic curves and Fermat's last theorem},
   journal={Ann. of Math. (2)},
   volume={141},
   date={1995},
   number={3},
   pages={443--551},
   issn={0003-486X},
   review={\MR{1333035}},
   doi={10.2307/2118559},
}

\end{biblist}
\end{bibdiv}

\end{document}